\documentclass[a4paper,10pt,reqno, english]{amsart}  
\usepackage[utf8]{inputenc}
\usepackage[T1]{fontenc}
\usepackage{amssymb}
\usepackage{graphicx}
\usepackage{amsmath,amsthm}
\usepackage{amsfonts,amssymb,enumerate}
\usepackage{url,paralist}
\usepackage{mathtools} 
\usepackage[arrow,curve,matrix,tips,2cell]{xy}  
  \SelectTips{eu}{10} \UseTips
  \UseAllTwocells 

\usepackage{tikz}
\usepackage{pdfsync}
\usepackage{hyperref}
\usepackage{color}
\usepackage{enumerate,amssymb}
\usepackage{mathtools}

\theoremstyle{plain}
\newtheorem{theorem}{Theorem}[section]

\newtheorem{lemma}[theorem]{Lemma}

\newtheorem{corollary}[theorem]{Corollary}
\newtheorem{proposition}[theorem]{Proposition}
\newtheorem{enumeration}[theorem]{Enumeration}

\newtheorem*{theorem*}{Theorem}

\newtheorem*{ghrproblem}{The Gr\"unbaum--Hadwiger--Ramos problem}
\newtheorem*{r-conjecture}{The Ramos conjecture}

\newtheorem*{claim*}{Claim}

\theoremstyle{definition}
\newtheorem{definition}[theorem]{Definition}

\newtheorem{example}[theorem]{Example}
\newtheorem{remark}[theorem]{Remark}

\hypersetup{
  colorlinks = false,
  urlcolor = black,
  pdfkeywords = {albert haase, mathematics, mathematik},
  pdfsubject = {},
  pdfpagemode = UseNone
}

\newcommand{\BIGOP}[1]{\mathop{\mathchoice%
{\raise-0.22em\hbox{\huge $#1$}}%
{\raise-0.05em\hbox{\Large $#1$}}{\hbox{\large $#1$}}{#1}}}

\newcommand{\A}{\mathcal{A}}

\newcommand{\R}{\mathbb{R}}

\newcommand{\Z}{\mathbb{Z}}

\DeclareMathOperator{\im}{im}
\newcommand\Wk{\Sym^{\pm}}
\newcommand\Sym{\mathfrak S}

\newcommand\oo{\mathfrak o}

\newcommand{\bb}{\mathfrak h} 
\newcommand{\M}{\mathcal{M}}
\renewcommand{\H}{\mathcal{H}}
\newcommand{\C}{\mathcal{C}}

\newcommand{\relint}{\operatorname{relint}}


\setcounter{MaxMatrixCols}{20}

\newcounter{commentcounter}

\begin{document}

\title[Hyperplane mass partitions via relative EOT]{Hyperplane mass partitions via relative equivariant obstruction theory}


\author[Blagojevi\'c]{Pavle V. M. Blagojevi\'{c}} 
\thanks{P.B. was supported by the DFG via the Collaborative Research Center TRR~109 ``Discretization in Geometry and Dynamics'',
       and by the grant ON 174008 of Serbian Ministry of Education and Science.}
\address{Math. Institut SANU, Knez Mihailova 36, 11001 Beograd, Serbia\hfill\break%
\mbox{\hspace{4.2mm}}Inst. Math., FU Berlin, Arnimallee 2, 14195 Berlin, Germany}
\email{pavleb@mi.sanu.ac.rs} 
\author[Frick]{Florian Frick}
\thanks{F.F. and A.H. were supported by DFG via the Berlin Mathematical School.}
\address{Dept.\ Math., Cornell University, Ithaca, NY 14853, USA}
\email{ff238@cornell.edu} 
\author[Haase]{Albert Haase} 
\address{Inst. Math., FU Berlin, Arnimallee 2, 14195 Berlin, Germany} 
\email{a.haase@fu-berlin.de}
\author[Ziegler]{G\"unter M. Ziegler} 
\thanks{G.M.Z.\ received funding from the ERC project  
no.~247029-\emph{SDModels} and by DFG via the Research Training Group “Methods for Discrete Structures” and the 
Collaborative Research Center TRR~109 ``Discretization in Geometry and Dynamics''}  
\address{Inst. Math., FU Berlin, Arnimallee 2, 14195 Berlin, Germany} 
\email{ziegler@math.fu-berlin.de}


\begin{abstract}
The Grünbaum--Hadwiger--Ramos hyperplane mass partition problem was 
introduced by Grünbaum (1960) in a special case and in general form by Ramos (1996).
It asks for the “admissible”
triples $(d,j,k)$ such that for any $j$ masses in $\R^d$ there are 
$k$ hyperplanes that cut each of the masses into $2^k$ equal parts.
Ramos' conjecture is that the Avis--Ramos necessary lower bound condition
$dk\ge j(2^k-1)$ is also sufficient.

We develop a “join scheme” for this problem, such that non-existence of an
${\Sym_k^\pm}$-equivariant map between spheres
$(S^d)^{*k} \rightarrow S(W_k\oplus U_k^{\oplus j})$
that extends a test map on the subspace of $(S^d)^{*k}$
where the hyperoctahedral group $\Sym_k^\pm$ acts non-freely,
implies that $(d,j,k)$ is admissible.

For the sphere $(S^d)^{*k}$ we obtain a very efficient regular cell decomposition,
whose cells get a combinatorial interpretation with respect to measures on a modified moment curve.
This allows us to apply relative equivariant obstruction theory successfully, even in
the case when the difference of dimensions of the spheres $(S^d)^{*k}$ and $S(W_k\oplus U_k^{\oplus j})$ is greater than one.
The evaluation of obstruction classes leads to counting problems for concatenated Gray codes.

Thus we give a rigorous, unified treatment of the previously announced
cases of the Grünbaum--Hadwiger--Ramos problem, as well as a number of new cases
for Ramos' conjecture.
\end{abstract}


\date{September 9, 2015; revised April 27, 2016}

\maketitle

\section{Introduction}
\label{sec : Introduction}

\subsection{Gr\"unbaum--Hadwiger--Ramos hyperplane mass partition problem}
\label{subsec:the_problem}

In 1960, Grünbaum \cite[Sec. 4.(v)]{grunbaum1960partitions}
asked whether for any convex body in $\R^k$ there are $k$ affine hyperplanes that divide it into $2^k$ parts of equal volume:
This is now known to be true for $k\le 3$, due to Hadwiger \cite{hadwiger1966} in 1966, and remains open and challenging for $k=4$.
(A weak partition result for $k=4$ was given in 2009 by Dimitrijevi\'c-Blagojevi\'c \cite{dimitrijevic2009}.)
For $k>4$ it is false, as shown by Avis~\cite{avis1984} in 1984 by considering a measure on a moment curve. 
In 1996, Ramos \cite{ramos1996equipartition} proposed the following generalization of Gr\"unbaum's problem.

\begin{ghrproblem}
\label{GruenbaumProblem}
Determine the minimal dimension $d=\Delta (j,k)$ such that for every collection of $j$ masses
$\mathcal{M}$ on $\R^d$ there exists an arrangement of $k$ affine hyperplanes $\mathcal{H}$ in $\R^d$ that equiparts~$\mathcal{M}$.
\end{ghrproblem}

\noindent
The Ham Sandwich theorem, conjectured by Steinhaus and proved by Banach, states that~$\Delta(d,1)=d$.  
The Gr\"{u}nbaum--Hadwiger--Ramos hyperplane mass partition problem was studied by many authors.  
It has been an excellent testing ground for different equivariant topology methods; see to our recent survey in~\cite{bfhz_2015}.

The first general result about the function $\Delta (j,k)$ was obtained by Ramos~\cite{ramos1996equipartition},
by generalizing Avis' observation:
The lower bound 
\[
\Delta(j,k)\ge\tfrac{2^k-1}{k}j
\]
follows from considering $k$ measures with disjoint connected supports concentrated along a moment curve in~$\R^d$.
Ramos also conjectured that this lower bound is tight.

\begin{r-conjecture}  
$\Delta(j,k)=\lceil \tfrac{2^k-1}{k}j\rceil$ for every $j\geq 1$ and $k\geq 1$.
\end{r-conjecture}

All available evidence up to now supports this, though it has been established rigorously only in special cases.

\subsection{Product scheme and join scheme}
\label{subsec:product_and_join_scheme}

It seems natural to use $Y_{d,k}:=(S^d)^k$ as a configuration space for any $k$ oriented affine hyperplanes/halfspaces in~$\R^d$, which leads to the following \emph{product scheme}:
If there is no equivariant map
\[
   (S^d)^k \longrightarrow_{\Sym_k^\pm} S(U_k^{\oplus j})
\]
from the configuration space to the unit sphere in the space $U_k^{\oplus j}$ of values on the orthants of $\R^k$ that sum to $0$, which is equivariant with respect to the hyperoctahedral (signed permutation) group $\Wk_k$, then there is no counter-example for the given parameters, so $\Delta(j,k)\le d$.

However, our critical review \cite{bfhz_2015} of the main papers on the Gr\"unbaum--Hadwiger--Ramos problem since 1998 has shown that this scheme is very hard to handle:
Except for the 2006 upper bounds by Mani-Levitska, Vre\'cica \& \v{Z}ivaljevi\'c
   \cite{mani2006}, derived from a Fadell--Husseini index calculation, it has produced very few valid results:
The group action on $(S^d)^k$ is not free, the Fadell--Husseini index is rather large and thus yields weak results, and there is no efficient cell complex model at hand.

In this paper, we provide a new approach, which proves to be remarkably clean and efficient. 
For this, we use a \emph{join scheme}, as introduced by Blagojevi\'c and Ziegler \cite{b_l_z_2012}, which takes the form
\[
   F:\quad (S^d)^{*k} \longrightarrow_{\Sym_k^\pm} S(W_k\oplus U_k^{\oplus j}).
\]
Here the domain $(S^d)^{*k}\subseteq\R^{(d+1)\times k}$ is a sphere of dimension $dk+k-1$, given by
\[
X_{d,k}:=\{(\lambda_1x_1,\dots,\lambda_kx_k): x_1,\dots,x_k\in S^d,\ \lambda_1,\dots,\lambda_k\ge0,\ 
\lambda_1+\dots+\lambda_k=1\},
\]
where we write $\lambda_1x_1+\dots+\lambda_kx_k$ as a short-hand for $(\lambda_1x_1,\dots,\lambda_kx_k)$.
The co-domain is a sphere of dimension $j(2^k-1)+k-2$.
Both domain and co-domain are equipped with canonical $\Wk_k$-actions.
We observe that the map restricted to the 
points with non-trivial stabilizer (the “non-free part”)  
\[
   F':\quad X_{d,k}^{>1}\subset(S^d)^{*k}\longrightarrow_{\Sym_k^\pm}S(W_k\oplus U_k^{\oplus j})
\] 
is the same up to homotopy for all test maps.   
If for any parameters $(j,k,d)$ an equivariant extension $F$ of $F'$ 
does not exist, we get that $\Delta(j,k)\le d$.

To decide the existence of this map, or at least obtain necessary criteria, we employ relative equivariant obstruction theory, as explained by tom Dieck \cite[Sect.~II.3]{tom1987transformation}.
This turns out to work beautifully, and have a few remarkable aspects:
\begin{compactitem}[ $\bullet$ ]
\item The Fox--Neuwirth \cite{FoxNeuwirth62}/Bj\"orner--Ziegler \cite{Z19}  combinatorial stratification method yields a simple and efficient cone stratification for 
the space $\R^{(d+1)\times k}$, which is equivariant with respect to the action of $\Wk_k$ on the columns, 
and which respects the arrangement of $k^2$ subspaces of codimension $d$ given by columns of a matrix $(x_1,\dots,x_d)$ being equal, opposite, or zero.
    
\item This yields a small equivariant regular CW complex model for the sphere $(S^d)^{*k}\subseteq\R^{(d+1)\times k}$, for which the the non-free part, given by an arrangement of $k^2$ subspheres of codimension $d+1$, is an invariant  subcomplex.
	The cells $D^S_I(\sigma)$ in the complex are given by combinatorial data.
    
\item To evaluate the obstruction cocycle, we use measures on a non-standard (binomial coefficient) moment curve. 
	For the resulting test map, the relevant cells $D^S_I(\sigma)$ can be interpreted as $k$-tuples of hyperplanes such that some of the hyperplanes have to pass through prescribed points of the moment curve, or equivalently, they have to bisect some extra masses.
\end{compactitem}

\subsection{Statement of the main results}
 
The join scheme reduces the Gr\"unbaum--Hadwiger--Ramos problem 
to a combinatorial counting problem that can be solved by hand or by means of a computer:  
A $k$-bit \emph{Gray code} is a $k\times 2^k $ binary matrix of all column vectors of length~$k$ such that two consecutive vectors 
differ by only one bit. 
Such a $k$-bit code can be interpreted as a Hamiltonian path in the graph of the $k$-cube $[0,1]^k$. 
The \emph{transition count} of a row in a binary matrix~$A$ is the number of bit-changes, \emph{not} counting a bit change from the last to the first entry. By \emph{transition counts} of a matrix~$A$ we refer to the vector of the transition counts of the rows of the matrix~$A$.
Two binary matrices $A$ and $A'$ are {\em equivalent}, if $A$ can be obtained from $A'$ by a sequence of  permutations of rows and/or inversion of bits in rows. 

\begin{definition}
\label{def:Equiparting_Gray_code}
Let $d\geq 1$, $j\geq 1$, $\ell\geq 0$ and $k\geq 1$ be integers such that $dk = (2^k-1)j+ \ell$ with $0\leq \ell\leq d-1$.
A binary matrix $A$ of size $k \times j2^k$ is an {\em $\ell$-equiparting matrix} if 
\begin{compactenum}[\rm ~~~(a)]
\item $A = (A_1, \dots, A_j)$ for Gray codes $A_1,\ldots,A_j$ with the property that the last column of $A_i$ is equal to the first column of $A_{i+1}$ for $1 \le i < j$; and
\item there is one row of the matrix $A$ with the transition count $d-\ell$, while all other rows have transition count $d$. 
\end{compactenum} 	
\end{definition}

\begin{example}
If $d=5$, $j=2$, $\ell=1$ and $k=3$, then a possible $1$-equiparting matrix is
\[ A = (A_1, A_2) =
\begin{pmatrix}
0 & 0 & 1 & 1 & 0 & 0 & 1 & 1 & & 1 & 1 & 1 & 1 & 0 & 0 & 0 & 0 \\
0 & 0 & 0 & 1 & 1 & 1 & 1 & 0 &  & 0 & 1 & 1 & 0 & 0 & 0 & 1 & 1 \\
0 & 1 & 1 & 1 & 1 & 0 & 0 & 0 &  & 0 & 0 & 1 & 1 & 1 & 0 & 0 & 1 \\
\end{pmatrix}.
\]
In this example the first row of~$A$ has transition count~$4$ while the remaining two rows have transition count~$5$.
\end{example} 

\begin{theorem}
\label{th-matrices_corr_equipartitions} 
Let $d\ge 1$, $j\geq 1$, $\ell\geq 0$ and $k\geq 2$ be integers with the property that $dk =(2^k-1)j+ \ell$ and $0\leq \ell\leq d-1$.
The number of non-equivalent $\ell$-equiparting matrices is the number of arrangements of $k$ affine hyperplanes $\H$ that equipart a given collection of $j$ disjoint intervals on a moment curve $\gamma$ in $\R^d$, up to renumbering and orientation change of hyperplanes in $\H$, when it is forced that one of the hyperplanes passes through $\ell$ prescribed points on $\gamma$ that lie to the left of the
$j$ disjoint intervals.	
\end{theorem}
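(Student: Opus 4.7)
My plan is to establish a bijection between $\ell$-equiparting matrices modulo row permutation and bit inversion, and equipartitioning arrangements modulo renumbering and reorientation of hyperplanes. The two geometric inputs that drive everything are: (i) a hyperplane meets the moment curve $\gamma$ in at most $d$ points, and (ii) any $d$ distinct points on $\gamma$ lie on a unique hyperplane (Vandermonde nondegeneracy). Together with the arithmetic identity $dk=(2^k-1)j+\ell$ with $0\le\ell\le d-1$, these force a rigid structure on any admissible arrangement.

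For the forward map, I take an arrangement $\H=\{H_1,\dots,H_k\}$ equipartitioning $I_1,\dots,I_j$ with $H_1$ through the $\ell$ prescribed points to the left. Equipartition of the uniform measure on each $I_i$ forces a decomposition into $2^k$ equally long sub-intervals whose sign vectors exhaust $\{0,1\}^k$; between consecutive sub-intervals exactly one sign flips, so the sign-vector sequence along $I_i$ forms the columns of a Gray code $A_i$, and I set $A=(A_1,\dots,A_j)$. To verify (a) and (b) I use a tight counting argument: the number of hyperplane--$\gamma$ crossings inside $\bigcup_i I_i$ equals $j(2^k-1)$, while the total number across all of $\gamma$ is at most $dk$; since $H_1$ already uses $\ell$ intersections at the prescribed points and $dk=j(2^k-1)+\ell$, the upper bound is saturated, which forces $H_1$ to have exactly $d-\ell$ crossings inside $\bigcup_i I_i$, each $H_m$ with $m\ne 1$ to have exactly $d$, and no hyperplane to meet $\gamma$ outside $\bigcup_i I_i$ apart from the prescribed points. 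The absence of crossings between consecutive intervals gives condition (a); the crossing counts give condition (b).

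For the inverse, given an $\ell$-equiparting matrix $A$, I permute rows so that the special row (transition count $d-\ell$) is row $1$, subdivide each $I_i$ into $2^k$ equal parts marking the $2^k-1$ transition points, and assign each such point to the hyperplane whose row flips there. This designates $d$ points of $\gamma$ per hyperplane: $d-\ell$ inside $\bigcup_i I_i$ plus the $\ell$ prescribed points for $H_1$, and $d$ inside $\bigcup_i I_i$ for the others. By Vandermonde each $d$-tuple spans a unique hyperplane, defining $\H$; and since $H_m$ cannot meet $\gamma$ in more than $d$ points, those are \emph{all} its intersections with $\gamma$. Hence the sign vector along $\gamma$ changes precisely at the marked transitions, and choosing orientations so that the initial sign vector agrees with the first column of $A_1$ reproduces the pattern encoded by $A$, giving an equipartition. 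Row permutations match relabelings of hyperplanes and bit inversions match orientation reversals, so the two constructions are mutually inverse modulo the stated equivalences. The main obstacle is the rigidity argument in the forward direction---showing that every intersection of every hyperplane with $\gamma$ is accounted for, so that nothing stray can happen between the intervals or to the right of $I_j$; once this is in place, conditions (a), (b), and the Vandermonde-based reconstruction essentially fall out.
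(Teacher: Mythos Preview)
Your proof is correct and follows essentially the same two-direction bijection as the paper: from an $\ell$-equiparting matrix one builds the arrangement by sending each hyperplane through its designated transition points (plus the $\ell$ prescribed points for the special row) and invoking Vandermonde, and from an equipartitioning arrangement one reads off the sign-vector sequences along the intervals to recover the Gray codes. Your counting/rigidity argument---that the bound $dk=j(2^k-1)+\ell$ is saturated, forcing exactly $2^k-1$ simple crossings per interval and none elsewhere---is spelled out more explicitly than in the paper, which asserts the corresponding fact rather tersely.
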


In some situations this yields a solution for the Gr\"unbaum--Hadwiger--Ramos problem.

\begin{theorem}
\label{th-matrices_num_odd_equip_exist}
Let $j\geq 1$ and $k\geq 3$ be integers, with $d:=\lceil \tfrac{2^k-1}{k}j\rceil$ and $\ell:=\lceil \tfrac{2^k-1}{k}j\rceil k-(2^k-1)j=dk-(2^k-1)j$, which implies $0\leq\ell<k\leq d$.
If the number of non-equivalent $\ell$-equiparting matrices of size $k \times j2^k$ is odd, then
\[
\Delta(j,k) = \lceil \tfrac{2^k-1}{k}j\rceil.
\]
\end{theorem}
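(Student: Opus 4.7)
The lower bound $\Delta(j,k)\ge\lceil \tfrac{2^k-1}{k}j\rceil$ is Ramos' inequality via disjoint masses on a moment curve. For the matching upper bound with $d=\lceil \tfrac{2^k-1}{k}j\rceil$, I would invoke the join scheme of Section~\ref{subsec:product_and_join_scheme}: it suffices to prove that the canonical $\Sym_k^\pm$-equivariant map
\[
F'\colon X_{d,k}^{>1}\longrightarrow_{\Sym_k^\pm}S(W_k\oplus U_k^{\oplus j})
\]
does not admit an equivariant extension to the full sphere $(S^d)^{*k}$. Non-existence will be certified via the primary obstruction class of relative equivariant obstruction theory.

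I would apply the relative theory (in the sense of tom Dieck) to the $\Sym_k^\pm$-CW pair $\bigl((S^d)^{*k},X_{d,k}^{>1}\bigr)$ with target the sphere $S:=S(W_k\oplus U_k^{\oplus j})$ of dimension $n=j(2^k-1)+k-2$. Writing $dk=(2^k-1)j+\ell$ with $0\le\ell<k$, the domain has dimension $n+\ell+1$. Since $\pi_i(S)=0$ for $i<n$, every extension attempt over the relative $n$-skeleton succeeds without obstruction, and the first potentially non-trivial obstruction is the primary class
\[
\mathfrak o(F')\in H^{n+1}_{\Sym_k^\pm}\bigl((S^d)^{*k},X_{d,k}^{>1};\pi_n(S)\bigr),
\]
where $\pi_n(S)\cong\Z$ carries the sign character of $\Sym_k^\pm$. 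In the Fox--Neuwirth/Björner--Ziegler CW model, the cells $D^S_I(\sigma)$ of relative dimension $n+1$ all lie in the free part, so $\mathfrak o(F')$ is determined cell by cell, and the cocycle's value on the relative fundamental class equals the algebraic sum of the local degrees, at the zero set, of any chosen equivariant test map.

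For this evaluation I would plug in the test map induced by $j$ masses concentrated along disjoint arcs of the modified (binomial-coefficient) moment curve, together with $\ell$ prescribed points of that curve placed to the left of the arcs. On a given free $(n+1)$-cell the test map either avoids the origin or crosses it transversally with local degree $\pm 1$, and Theorem~\ref{th-matrices_corr_equipartitions} translates the transverse crossings into the combinatorial condition that the labels $(S,I,\sigma)$ of the cell encode an $\ell$-equiparting matrix. The row-permutation and bit-inversion equivalence on matrices from Definition~\ref{def:Equiparting_Gray_code} matches the $\Sym_k^\pm$-orbit equivalence on cells, so after reduction modulo $2$ the evaluation of $\mathfrak o(F')$ equals the parity of the number of non-equivalent $\ell$-equiparting matrices of size $k\times j2^k$.

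If this parity is odd, the primary obstruction is non-zero mod $2$, no equivariant extension $F$ exists, and therefore $\Delta(j,k)\le d$, which combined with Ramos' lower bound yields equality. The main obstacle I expect lies in the matching step of paragraph three: one has to verify that each local contribution from the moment-curve test map is indeed $\pm 1$ rather than zero or higher-order, and that quotienting the set of contributing cells by $\Sym_k^\pm$ reproduces exactly the Gray-code equivalence, so that no cancellation or over-counting disturbs the mod-$2$ identification between the obstruction evaluation and the matrix count.
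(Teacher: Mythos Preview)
Your overall strategy---reducing to the join scheme, applying relative equivariant obstruction theory, and identifying the obstruction cocycle value with a Gray-code count via Theorem~\ref{th-matrices_corr_equipartitions}---matches the paper. However, there is a genuine gap at the final step: you pass from ``the cocycle value on a cell is odd'' directly to ``the primary obstruction is nonzero mod $2$'' without justification. Showing that a cocycle represents a nonzero cohomology class requires verifying that it is not a coboundary, and this is where the real work lies.

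The paper handles this by evaluating on one explicit cell $\theta = D^{+,\ldots,+}_{\ell+1,1,\ldots,1}(12\cdots k)$ and computing $\partial e_\theta$ in the cellular chain complex (with separate formulas for $\ell=0$, $\ell=1$, and $2\le\ell\le d-1$). Each summand of $\partial e_\theta$ has the form $(1 \pm g)\cdot e_\gamma$ for some $g\in\Sym_k^\pm$ that is either an $\varepsilon_i$ or a transposition $\tau_{i-1,i}$. The crucial point, and the reason the hypothesis $k\ge 3$ is needed, is that for $k\ge3$ every $\varepsilon_i$ acts \emph{trivially} on $\pi_{N_2}(S(W_k\oplus U_k^{\oplus j}))\cong\Z$ while every transposition acts by $-1$; hence each factor $(1\pm g)$ acts on the coefficient module as $0$ or $2$, and every coboundary $\delta\mathfrak h$ evaluates on $e_\theta$ to an \emph{even} multiple of the generator. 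This is what forces an odd cocycle value to survive in cohomology. For $k=2$ the transposition acts by $(-1)^{j+1}$, which breaks this parity argument and necessitates a different treatment. Your description of the coefficient module as ``the sign character of $\Sym_k^\pm$'' is therefore not accurate (the $\varepsilon_i$ act trivially, not by sign), and in any case you do not carry out the coboundary computation. Relatedly, your reference to a ``relative fundamental class'' in dimension $n+1$ only makes sense when $\ell=0$; for $\ell\ge1$ there are several $\Sym_k^\pm$-orbits of $(n+1)$-cells and a specific one must be chosen and its boundary analyzed.
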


Theorem~\ref{th-matrices_num_odd_equip_exist} is also true for $k=1$ (and thus $d=j$, $\ell=0$), where it yields the Ham Sandwich theorem: 
In this case an equiparting matrix $A$ is a row vector of length $2d$ and transition count~$d$. 
Thus, each $A_i$ is either $(0,1)$ or $(1,0)$, where $A_i$ uniquely determines $A_{i+1}$. 
Hence, up to inversion of bits $A$ is unique and so~$\Delta(d,1) \leq d$, and consequently $\Delta(d,1)=d$.

While the situation for $k=1$ hyperplane is fully understood, we seem to be far from a complete solution for the case of $k=2$ hyperplanes. 
However, we do obtain the following instances.  

\begin{theorem}
\label{th-main:D(j,2)}
Let $t\geq1$. Then:
\begin{compactenum}[\rm ~~~(i)]
\item $\Delta(2^t-1,2)=3\cdot 2^{t-1}-1$, \label{item:2^t-1}
\item $\Delta(2^t,2)=3\cdot 2^{t-1}$, \label{item:2^t}
\item $\Delta(2^t+1,2)=3\cdot 2^{t-1}+2$. \label{item:2^t+1}
\end{compactenum}
\end{theorem}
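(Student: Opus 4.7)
The lower bounds $3\cdot 2^{t-1}-1$, $3\cdot 2^{t-1}$, and $3\cdot 2^{t-1}+2$ are immediate from Ramos' inequality $\Delta(j,2)\ge\lceil 3j/2\rceil$ applied to $j\in\{2^t-1,\,2^t,\,2^t+1\}$. For the matching upper bounds the plan is to invoke the join scheme in each case: setting $d=\lceil 3j/2\rceil$ and $\ell=2d-3j$, I aim to show that there is no $\Sym_2^\pm$-equivariant extension $(S^d)^{*2}\to S(W_2\oplus U_2^{\oplus j})$ of the test map on the non-free part, which yields $\Delta(j,2)\le d$. Using relative equivariant obstruction theory on the regular CW-model together with Theorem~\ref{th-matrices_corr_equipartitions}, this should reduce the primary obstruction to a count modulo~$2$ of $\ell$-equiparting matrices of size $2\times 4j$.

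The counting for $k=2$ is very explicit. Any $2\times 4$ Gray code is a Hamiltonian path in the $4$-cycle $C_4$ on $\{00,01,11,10\}$, so a concatenated matrix $A=(A_1,\dots,A_j)$ corresponds bijectively to a walk $v_0\to v_1\to\cdots\to v_j$ in $C_4$, determined by $v_0$ (four choices) together with a bit-change sequence $(\sigma_1,\dots,\sigma_j)\in\{1,2\}^j$ recording which of the two bits changes at each step. Writing $a=\#\{i:\sigma_i=1\}$ and $b=\#\{i:\sigma_i=2\}$, one verifies directly that the two row-transition counts of $A$ are $2b+a$ and $2a+b$, which pins down the multiset $\{a,b\}$ from the $\ell$-equiparting condition. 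A short case check shows that the equivalence group $\Sym_2^\pm$ of order $8$ acts freely on equiparting matrices, so the number of equivalence classes equals the total matrix count divided by $8$.

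For case~(ii), $j=2^t$ and $\ell=0$ force $a=b=2^{t-1}$ and give $\tfrac12\binom{2^t}{2^{t-1}}$ orbits, which is odd by Kummer's theorem (a single carry in $2^{t-1}+2^{t-1}$). For case~(i), $j=2^t-1$ and $\ell=1$ force $\{a,b\}=\{2^{t-1}-1,2^{t-1}\}$ and yield $\binom{2^t-1}{2^{t-1}}$ orbits, which is odd by Lucas' theorem since $2^t-1$ has only $1$s in its binary expansion. For case~(iii) with $t=1$ the analogous count $\binom{3}{1}=3$ is likewise odd. The main obstacle is case~(iii) for $t\ge 2$, where the analogous count $\binom{2^t+1}{2^{t-1}}$ is \emph{even}: Lucas kills it because the binary digit of $2^t+1$ at position $t-1$ is $0$ while that of $2^{t-1}$ is $1$. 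The primary-obstruction counting criterion therefore fails, and the real work will go into establishing the auxiliary monotonicity $\Delta(j+1,2)\le\Delta(j,2)+2$, which combined with case~(ii) gives $\Delta(2^t+1,2)\le 3\cdot 2^{t-1}+2$; I plan to prove this monotonicity by lifting a two-hyperplane equipartition of $j$ masses from $\R^{\Delta(j,2)}$ into $\R^{\Delta(j,2)+2}$ and adjusting the four new parameters against the three equipartition constraints imposed by the extra mass via a Borsuk--Ulam-type argument. A cleaner but more delicate alternative would be to detect a secondary obstruction within the non-primary obstruction framework developed in this paper.
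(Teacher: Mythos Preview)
Your treatment of cases (i) and (ii) is essentially the paper's: the counts $\tfrac12\binom{j}{j/2}$ and $\binom{j}{(j+1)/2}$ are exactly Lemma~\ref{lem:equipartin matrices - 2}, and the parity argument via Kummer/Lucas matches. One caveat: Theorem~\ref{th-matrices_num_odd_equip_exist} is stated only for $k\ge 3$, so for $k=2$ you must redo the coboundary calculation using the correct $\Wk_2$-module structure on $\pi_{N_2}\cong\Z$ (the transposition $\tau_{1,2}$ acts by $(-1)^{j+1}$, not by $-1$); the paper does this explicitly, and the parity argument survives.

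The gap is in case (iii) for $t\ge 2$. Your proposed monotonicity $\Delta(j+1,2)\le\Delta(j,2)+2$ is not established in the paper and your sketch (``lift and adjust four parameters against three constraints via Borsuk--Ulam'') is not a proof; making this rigorous is a separate nontrivial problem. The paper does \emph{not} circumvent the even count by a reduction argument, nor by a secondary obstruction. Instead it extracts more from the \emph{primary} obstruction. The point you are missing is that for $k=2$, $\ell=1$, and $d$ even, the coboundary formula~\eqref{rel - 2} together with the $\Wk_2$-action gives
\[
\delta\bb(e_\theta)=4\,\bb(e_{\gamma_3}),
\]
so $\oo(g)$ is a coboundary only if $4\mid a$, not merely $2\mid a$. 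The paper then shows (by an explicit local analysis of the test map near each zero, using that hyperplanes meeting the moment curve in $d$ points give transversal zeros whose local charts differ by even permutations) that all local degrees have the \emph{same} sign, hence $a=\pm\binom{j}{(j+1)/2}$ exactly, not just modulo~$2$. For $j=2^t+1$ Kummer gives $\binom{2^t+1}{2^{t-1}+1}\equiv 2\pmod 4$, so $a$ is not divisible by~$4$ and the obstruction class is nonzero. That is the missing idea: sharpen the divisibility criterion from mod~$2$ to mod~$4$ using the special $k=2$ module structure, and compute $a$ on the nose by proving the local degrees agree.
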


The statements~(\ref{item:2^t-1}) and~(\ref{item:2^t+1}) were already known: 
Part~(\ref{item:2^t-1}) is the only case where the lower bound of Ramos and the upper bound of Mani-Levitska, Vre\'cica, and \v Zivaljevi\'c~~\cite[Thm.\,39]{mani2006} coincide.
Part (\ref{item:2^t}) is Hadwiger's result \cite{hadwiger1966} for $t=1$; the general case was previously claimed by Mani-Levitska et al.~\cite[Prop.\,25]{mani2006}. 
However, the proof of the result was incorrect and not recoverable, 
as explained in~\cite[Sec.\,8.1]{bfhz_2015}. 
Here we recover this result by a different method of proof.  
Similarly, statement (\ref{item:2^t+1}) was claimed by \v Zivaljevi\'c~\cite[Thm.\,2.1]{zivaljevic2011equipartitions} 
with a flawed proof; for an explanation of the gap see~\cite[Sec.\,8.2]{bfhz_2015}, where we also gave a 
proof of (\ref{item:2^t+1}) via degrees of equivariant maps ~\cite[Sec.\,5]{bfhz_2015}.
Here we will prove all three cases of Theorem~\ref{th-main:D(j,2)} in a uniform way.

In the case of $k=3$ hyperplanes we prove using Theorem~\ref{th-matrices_num_odd_equip_exist} the following instances of the Ramos conjecture.
\begin{theorem}
\label{th-main:D(j,3)}
~~
\begin{compactenum}[\rm ~~~(i)]
\item $\Delta(2,3)=5$, \label{item:Delta(2,3)}
\item $\Delta(4,3)=10$. \label{item:Delta(4,3)}
\end{compactenum}
\end{theorem}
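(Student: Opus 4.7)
The plan is to apply Theorem~\ref{th-matrices_num_odd_equip_exist} directly in each case, reducing the existence of hyperplane equipartitions to a parity count of $\ell$-equiparting matrices. For $\Delta(2,3)$ the parameters are $j=2$, $k=3$, hence $d=\lceil 14/3\rceil=5$ and $\ell=dk-(2^k-1)j=15-14=1$; for $\Delta(4,3)$ they are $j=4$, $k=3$, hence $d=\lceil 28/3\rceil=10$ and $\ell=30-28=2$. In both cases $0\le\ell<k\le d$, so Theorem~\ref{th-matrices_num_odd_equip_exist} applies. Ramos' lower bound already yields $\Delta(j,3)\ge d$, so it suffices to exhibit, in each case, an odd number of non-equivalent $\ell$-equiparting matrices of size $3\times j2^3$.

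The next step is to set up the enumeration cleanly. An $\ell$-equiparting matrix is a concatenation $A=(A_1,\dots,A_j)$ where each $A_i$ is a $3\times 8$ Gray code, the last column of $A_i$ agrees with the first column of $A_{i+1}$, and the triple of row transition counts is a permutation of $(d,d,d-\ell)$, i.e.\ $(5,5,4)$ for Theorem~\ref{th-main:D(j,3)}(\ref{item:Delta(2,3)}) and $(10,10,8)$ for~(\ref{item:Delta(4,3)}). Two such matrices are equivalent under the group $\Sym_3\ltimes(\Z/2)^3$ of row permutations and row-wise bit flips. I would normalize by fixing the first column of $A_1$ to be the zero vector, which kills the $(\Z/2)^3$-action, and then quotient the resulting list by $\Sym_3$ at the end.

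The enumeration then proceeds block by block. For $k=3$ the directed Hamiltonian paths in the $3$-cube that start at $0$ form a short, well-understood list; each choice of $A_i$ with fixed starting vertex forces its endpoint, which in turn is the starting vertex of $A_{i+1}$. This yields a finite search tree of depth $j$. For each leaf one computes the three row transition counts of the concatenation and retains only the leaves whose multiset equals $\{d,d,d-\ell\}$. Passing to orbits under $\Sym_3$ and reducing mod $2$ gives the parity. Alternatively, via Theorem~\ref{th-matrices_corr_equipartitions} the same count is the (mod $2$) count of hyperplane equipartitions of the moment-curve configuration, so either interpretation could be used as a cross-check.

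The main obstacle is carrying out the counts carefully enough to be sure of the parity, especially in case~(\ref{item:Delta(4,3)}) where $j=4$ and the concatenated matrix has $32$ columns; this is small enough for complete enumeration but large enough that care is needed in accounting for the $\Sym_3$-equivalence, since overlooked symmetries are the obvious way to get the parity wrong. For case~(\ref{item:Delta(2,3)}) a direct hand enumeration over the short list of $3$-bit Gray codes should suffice, while case~(\ref{item:Delta(4,3)}) is naturally treated by a finite computer search. The expected outcome is that in both cases the number of non-equivalent $\ell$-equiparting matrices is odd, at which point Theorem~\ref{th-matrices_num_odd_equip_exist} closes the argument.
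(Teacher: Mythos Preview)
Your proposal is correct and follows essentially the same approach as the paper: reduce via Theorem~\ref{th-matrices_num_odd_equip_exist} to a parity count of non-equivalent $\ell$-equiparting matrices, normalize by fixing the first column (using the $(\Z/2)^3$-action) and then quotient by $\Sym_3$, handle $j=2$ by hand and $j=4$ by computer. The paper carries this out explicitly and obtains $13$ non-equivalent $1$-equiparting matrices for $(j,k)=(2,3)$ (via a case split over the three $\Sym_3$-classes of $3$-bit Gray codes with transition counts $(3,2,2)$, $(3,3,1)$, $(4,2,1)$) and $2015$ non-equivalent $2$-equiparting matrices for $(j,k)=(4,3)$, both odd.
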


Statement (\ref{item:Delta(2,3)}) was previously claimed by Ramos~\cite[Sec.\,6.1]{ramos1996equipartition}.
A gap in the method that Ramos developed and used to get this result was explained in~\cite[Sec.\,7]{bfhz_2015}.
It is also claimed by Vre\'cica and \v Zivaljevi\'c in the recent preprint \cite{vrezival_2015} without a proof for the crucial \cite[Prop.\,3]{vrezival_2015}.

The reduction result of Hadwiger and Ramos $\Delta(j,k)\leq\Delta(2j,k-1)$ applied to Theorem~\ref{th-main:D(j,3)} implies the following consequences. 
For details on reduction results see for example~\cite[Sec.\,3.3]{bfhz_2015}.

\begin{corollary}
\label{cor:reduction}
~~
\begin{compactenum}[\rm ~~~(i)]
\item $4\leq \Delta(1,4)\leq 5$,
\item $8\leq \Delta(2,4)\leq 10$.
\end{compactenum}
\end{corollary}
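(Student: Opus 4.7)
The plan is to obtain both inequalities in each statement as a direct consequence of Theorem~\ref{th-main:D(j,3)} combined with the Ramos lower bound $\Delta(j,k)\geq\lceil\tfrac{2^k-1}{k}j\rceil$ (stated in the introduction) and the Hadwiger--Ramos reduction $\Delta(j,k)\leq\Delta(2j,k-1)$. There is no new topology to set up; the work is purely arithmetic together with an invocation of previously established results.

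For the lower bounds, I would substitute $(j,k)=(1,4)$ and $(j,k)=(2,4)$ into the Avis--Ramos inequality. With $k=4$ we have $2^k-1=15$, so
\[
\Delta(1,4)\;\geq\;\bigl\lceil\tfrac{15}{4}\cdot 1\bigr\rceil\;=\;4,\qquad
\Delta(2,4)\;\geq\;\bigl\lceil\tfrac{15}{4}\cdot 2\bigr\rceil\;=\;8.
\]
This handles the left-hand inequalities in~(i) and~(ii).

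For the upper bounds, I would apply the reduction $\Delta(j,k)\leq\Delta(2j,k-1)$, whose proof (recalled in~\cite[Sec.\,3.3]{bfhz_2015}) proceeds by bisecting each of the $j$ masses along a first hyperplane in $\R^{d}$ and then equipartitioning the resulting $2j$ half-masses with $k-1$ further hyperplanes. Specializing this reduction to $(j,k)=(1,4)$ and $(j,k)=(2,4)$ gives
\[
\Delta(1,4)\;\leq\;\Delta(2,3),\qquad \Delta(2,4)\;\leq\;\Delta(4,3),
\]
and plugging in $\Delta(2,3)=5$ and $\Delta(4,3)=10$ from Theorem~\ref{th-main:D(j,3)}(\ref{item:Delta(2,3)}) and~(\ref{item:Delta(4,3)}) yields the claimed right-hand inequalities.

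Since each half of the argument is a single substitution into a previously proved statement, there is essentially no obstacle; the only point requiring minimal care is to verify that the reduction inequality is applied in the correct direction (decreasing~$k$ from $4$ to~$3$ at the cost of doubling~$j$), and that the floor/ceiling arithmetic in the Ramos lower bound is performed correctly for the two relevant parameter pairs. Both checks are immediate.
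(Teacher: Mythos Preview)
Your proposal is correct and follows exactly the approach the paper uses: the upper bounds come from the Hadwiger--Ramos reduction $\Delta(j,k)\le\Delta(2j,k-1)$ applied to Theorem~\ref{th-main:D(j,3)}, and the lower bounds are the Avis--Ramos bound from the introduction.
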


\noindent
Note that $\Delta(1,4)$ is the open case for Gr\"unbaum's original conjecture.

\subsection*{Acknowledgement} We are grateful to the referee for very careful and helpful comments on this paper.

\section{The join configuration space test map scheme and  equivariant obstruction theory}
\label{sec:joint cs/tm scheme}

In this section we develop the join configuration test map scheme that was introduced in~\cite[Sec.\,2.1]{blagojevic2011}.
A sufficient condition for $\Delta(j,k)\le d$ will be phrased in terms of the non-existence of a particular equivariant map between representation spheres. 

\subsection{Arrangements of \emph{k} hyperplanes}
\label{subsec:arrangements}

Let $\hat H=\{x\in\R^d : \langle x,v\rangle =a\}$ be an affine hyperplane determined by a vector $v\in\R^d{\setminus}\{0\}$ and a constant $a\in\R$.
The hyperplane $\hat H$ determines two (closed) halfspaces
\[
\hat H^{0}=\{x\in \R^d :\langle x,v\rangle \geq a\}\qquad\text{and}\qquad 
\hat H^{1}=\{x\in\R^d : \langle x,v\rangle \leq a\}.
\]
Let $\mathcal{H}=(\hat H_1,\ldots,\hat H_k)$ be an arrangement of $k$ affine hyperplanes  in $\R^d$, and let $\alpha=(\alpha_1,\ldots,\alpha_k)\in (\Z/2)^k$.
The {\em orthant} determined by the arrangement $\mathcal{H}$ and $\alpha\in(\Z/2)^k$ is the intersection 
\[
\mathcal{O}_{\alpha}^{\mathcal{H}}=\hat H_1^{\alpha_{1}}\cap\cdots\cap\hat H_k^{\alpha_{k}}.
\]

Let $\mathcal{M}=(\mu_1,\dots,\mu_j)$ be a collection of finite Borel probability measures on $\R^d$ such that the measure of each hyperplane is zero.
Such measures will be called \emph{masses}. 
The assumptions about the measures guarantee that $\mu_i(\hat H^0_s)$ depends continuously 
on $\hat H^0_s$.

An arrangement of affine hyperplanes $\mathcal{H}=(\hat H_1,\ldots,\hat H_k)$ {\em equiparts} the collection of masses $\mathcal{M}=(\mu_1,\ldots,\mu_j)$ if for every element $\alpha\in(\Z/2)^k$ and every $\ell\in\{1,\ldots,j\}$
\[
\mu_{\ell} (\mathcal{O}_{\alpha}^{\mathcal{H}})=\tfrac{1}{2^{k}}.
\]
\subsection{The configuration spaces}
\label{subsec:configuration space}
The space of all oriented affine hyperplanes (or closed affine halfspaces) in $\R^d$ can be parametrized by the sphere $S^d$, where the north pole $e_{d+1}$ and the south pole $-e_{d+1}$ represent hyperplanes at infinity. 
An oriented affine hyperplane in $\R^d$ \emph{at infinity} is the set $\R^d$ or~$\emptyset$, depending on the orientation. 
Indeed, embed $\R^d$ into $\R^{d+1}$ via the map $(\xi_1,\dots, \xi_d)^{t} \longmapsto(1,\xi_1,\ldots,\xi_d)^{t}$. 
Then an oriented affine hyperplane $\hat{H}$ in $\R^d$ defines an oriented affine $(d-1)$-dimensional subspace of $\R^{d+1}$ that extends (uniquely) to an oriented linear hyperplane $H$ in~$\R^{d+1}$. 
The outer unit normal vector that determines the oriented linear hyperplane is a point on the sphere~$S^d$. 

We consider the following configuration spaces that parametrize arrangements of $k$ oriented affine hyperplanes in~$\R^d$:
\begin{compactenum}[\rm ~~~(1)] \label{enum_three_cstm_schemes}
\item \emph{The join configuration space}: $X_{d,k} := (S^d)^{*k} \cong S(\R^{(d+1)\times k})$, 
\item \emph{The product configuration space}: $Y_{d,k} := (S^d)^k$. 
\end{compactenum}
The elements of the join $X_{d,k}$ can be presented as formal convex combinations $\lambda_1 v_1 + \dots + \lambda_k v_k$, where $\lambda_i \geq 0,\,\sum \lambda_i = 1$ and $v_i \in S^d$. 

\subsection{The group actions}
\label{subsec:action on configuration space}

The space of all ordered $k$-tuples of oriented affine hyperplanes in $\R^d$ has natural symmetries: Each hyperplane can change orientation and the hyperplanes can be permuted.
Thus, the group $\Wk_k := (\Z/2)^k \rtimes \Sym_k$ encodes the symmetries of both configuration spaces.

The group $\Wk_k$ acts on $X_{d,k}$ as follows. 
Each copy of $\Z/2$ acts antipodally on the appropriate sphere $S^d$ in the join while the symmetric group $\Sym_k$ acts by permuting factors in the join product. 
More precisely, for $((\beta_1, \dots, \beta_k) \rtimes \pi) \in \Wk_k$ and $\lambda_1 v_1 + \dots + \lambda_k v_k \in X_{d,k}$ the action is given by
\begin{multline*}
((\beta_1, \dots, \beta_k) \rtimes \tau)\cdot (\lambda_1 v_1 + \dots + \lambda_k v_k) =\\
\lambda_{\tau^{-1}(1)} (-1)^{\beta_1} v_{\tau^{-1}(1)} + \dots +  \lambda_{\tau^{-1}(k)} (-1)^{\beta_k} v_{\tau^{-1}(k)}.
\end{multline*}
The product space $Y_{d,k}$ is a subspace of the join $X_{d,k}$ via the diagonal embedding $Y_{d,k} \longrightarrow X_{d,k}, (v_1,\ldots, v_k)\longmapsto \frac{1}{k}v_1 + \dots + \frac{1}{k}v_k$. 
The product $Y_{d,k}$ is an invariant subspace of $X_{d,k}$ with respect to the $\Wk_k$-action and consequently inherits the $\Wk_k$-action from $X_{d,k}$.
For $k \ge 2$, the action of $\Wk_k$ is {\em not} free on either $X_{d,k}$ or $Y_{d,k}$.

The sets of points in the configuration spaces $X_{d,k}$ and $Y_{d,k}$ that have non-trivial stabilizer with respect to the action of $\Wk_k$ can be described as follows:
\begin{multline*}
X_{d,k}^{>1}  = \{\lambda_1 v_1 + \dots + \lambda_k v_k : \\
\lambda_1\cdots\lambda_k=0,\text{  or  }\lambda_s=\lambda_r\text{ and }v_s=\pm v_r\text{ for some }1\leq s<r\leq k\},
\end{multline*}
and
\[
Y_{d,k}^{>1}  = \{(v_1,\ldots, v_k) : v_s=\pm v_r\text{ for some }1\leq s<r\leq k\}.
\]

\subsection{Test spaces}
\label{subsec:test space}
Consider the vector space $\R^{(\Z/2)^k}$, where the group element
$((\beta_1, \dots, \beta_k) \rtimes \tau) \in \Wk_k$  acts on a vector $(y_{(\alpha_1,\dots,\alpha_k)})_{(\alpha_1,\dots,\alpha_k) \in (\Z/2)^k}\in \R^{(\Z/2)^k}$ by acting on its indices as
\begin{equation}
\label{eq:action_U_k}
((\beta_1, \dots, \beta_k) \rtimes \tau) \cdot (\alpha_1, \dots, \alpha_k) = (\beta_1 + \alpha_{\tau^{-1}(1)}, \dots, \beta_k + \alpha_{\tau^{-1}(k)}).
\end{equation}
The subspace of $\R^{(\Z/2)^k}$ defined by
\[
U_k=\Big\{ (y_{\alpha})_{\alpha\in (\Z/2)^k} \in \R^{(\Z/2)^k} : \sum_{\alpha\in (\Z/2)^k} y_{\alpha} =0\Big\}
\]
is $\Wk_k$-invariant and therefore an $\Wk_k$-subrepresentation.

Next we consider the vector space $\R^k$ and its subspace
\[
W_k=\Big\{(z_1,\ldots,z_k)\in\R^k : \sum_{i=1}^kz_i=0\Big\}.
\]
The group $\Wk_k$ acts on $\R^k$ by permuting coordinates, i.e., for $((\beta_1, \dots, \beta_k) \rtimes \tau) \in \Wk_k$ and $(z_1,\ldots,z_k)\in\R^k$ we have
\begin{equation}
\label{eq:action_W_k}
((\beta_1, \dots, \beta_k) \rtimes \tau) \cdot (z_1,\ldots,z_k) = (z_{\tau^{-1}(1)},\ldots,z_{\tau^{-1}(k)}).
\end{equation}
In particular, the subgroup $(\Z/2)^k$ of $\Wk_k$ acts trivially on~$\R^k$.
The subspace $W_k\subset\R^k$ is $\Wk_k$-invariant and consequently a $\Wk_k$-subrepresentation.

\subsection{Test maps}
\label{subsec:test maps}

The product test map associated to the collection of $j$ masses $\mathcal{M}=(\mu_1, \dots, \mu_j)$ from the configuration space $Y_{d,k}$ to the {\em test space} $U_{k}^{\oplus j}$ is defined by
\begin{align*}
\phi_{\mathcal{M}} \colon\qquad\qquad Y_{d,k} &\longrightarrow U_{k}^{\oplus j},\\
(v_1, \dots, v_k) &\longmapsto \Big( \big(\mu_i(H^{\alpha_1}_{v_1} \cap \dots \cap H^{\alpha_k}_{v_k}) - \tfrac{1}{2^k} \big)_{(\alpha_1,\dots,\alpha_k)\in (\Z/2)^k} \Big)_{i \in \{1,\dots, j\}}.
\end{align*}

In this paper we mostly work with the join configuration space $X_{d,k}$. 
The corr\-esponding join test map associated to a collection of $j$ masses $\mathcal{M}=(\mu_1, \dots, \mu_j)$ maps the configuration space $X_{d,k}$ into the related {\em test space} $W_k\oplus U_{k}^{\oplus j}$.
It is defined by
\begin{align*}
\psi_{\mathcal{M}} \colon \qquad\qquad\qquad X_{d,k} &\longrightarrow W_k\oplus U_{k}^{\oplus j},\\
\lambda_1 v_1 +\cdots + \lambda_k v_k &\longmapsto (\lambda_1-\tfrac{1}{k},\ldots,\lambda_k-\tfrac{1}{k})\oplus(\lambda_1\cdots\lambda_k) \cdot \phi_{\mathcal{M}} (v_1, \dots, v_k).
\end{align*}

Both maps $\phi_{\mathcal{M}}$ and $\psi_{\mathcal{M}}$ are $\Wk_k$-equivariant with respect to the actions defined in Sections~\ref{subsec:action on configuration space} and~\ref{subsec:test space}.
Let $S(U_{k}^{\oplus j})$ and $S(W_k\oplus U_{k}^{\oplus j})$ denote the unit spheres in the vector spaces $U_{k}^{\oplus j}$ and $W_k\oplus U_{k}^{\oplus j}$, respectively. 
The maps $\phi_{\mathcal M}$ and $\psi_{\mathcal M}$ are called test maps since we have the following criterion, which reduces finding an equipartition to finding zeros of the test map.

\begin{proposition}
\label{prop:CS/TM}
Let $d\geq 1$, $k\geq 1$, and $j\geq 1$ be integers.
\begin{compactenum}[\rm ~~~(i)]
\item \label{item:test-map-zeros}
Let $\mathcal{M}$ be a collection of $j$ masses on $\R^d$, and let 
\[
\phi_{\mathcal{M}} \colon Y_{d,k} \longrightarrow U_{k}^{\oplus j}
\quad\text{and}\quad
\psi_{\mathcal{M}} \colon X_{d,k} \longrightarrow W_k\oplus U_{k}^{\oplus j}
\]
be the $\Wk_k$-equivariant maps defined above.
If \,$0\in \im\phi_{\mathcal{M}}$, or\, $0\in \im\psi_{\mathcal{M}}$, then there is an arrangement of $k$ affine hyperplanes that equiparts $\mathcal{M}$.
\item \label{item:existence-of-maps}
If there is no $\Wk_k$-equivariant map of either type
\[
Y_{d,k} \longrightarrow S(U_{k}^{\oplus j})
\quad\text{or}\quad
X_{d,k} \longrightarrow S(W_k\oplus U_{k}^{\oplus j}),
\]
then  $\Delta(j,k)\leq d$. 
\end{compactenum}
\end{proposition}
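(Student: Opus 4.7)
The plan is to prove part (i) by directly unpacking what vanishing of the two test maps means, and then to deduce part (ii) as a standard contrapositive via normalization.

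For part (i), I would first read off $\phi_{\mathcal{M}}(v_1,\dots,v_k)=0$ coordinate-wise: for every $\alpha\in(\Z/2)^k$ and every $\ell\in\{1,\dots,j\}$ it asserts $\mu_\ell(H_{v_1}^{\alpha_1}\cap\dots\cap H_{v_k}^{\alpha_k})=\tfrac{1}{2^k}$, which is precisely the equipartition condition for the arrangement $\mathcal{H}=(\hat H_{v_1},\dots,\hat H_{v_k})$. The one caveat is that the parameter sphere $S^d$ also contains the two ``hyperplanes at infinity'' $\pm e_{d+1}$, which have to be ruled out: if $v_i=\pm e_{d+1}$, then one of $H_{v_i}^{0},H_{v_i}^{1}$ equals $\R^d$ and the other equals $\emptyset$, so the coordinate of $\phi_{\mathcal{M}}$ indexed by an $\alpha$ with the ``empty'' choice in position $i$ evaluates to $-\tfrac{1}{2^k}\neq 0$. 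Hence a zero of $\phi_{\mathcal{M}}$ can only occur at a tuple corresponding to an honest arrangement of affine hyperplanes, and the vanishing then forces $\mathcal{H}$ to equipart $\mathcal{M}$. For the join test map $\psi_{\mathcal{M}}$, the $W_k$-component of a zero immediately forces $\lambda_1=\dots=\lambda_k=\tfrac{1}{k}$; in particular $\lambda_1\cdots\lambda_k\neq 0$, so the $U_k^{\oplus j}$-component reduces, after dividing out this positive scalar, to $\phi_{\mathcal{M}}(v_1,\dots,v_k)=0$, which is the case already handled.

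For part (ii), I would argue by contrapositive. Suppose $\Delta(j,k)>d$, so there exists a collection $\mathcal{M}$ of $j$ masses on $\R^d$ that no arrangement of $k$ affine hyperplanes equiparts. By part (i) we have $0\notin\im\phi_{\mathcal{M}}$ and $0\notin\im\psi_{\mathcal{M}}$. The test maps are continuous, since each $\mu_\ell$ has connected support and assigns zero mass to every hyperplane, so $\hat H^0\mapsto\mu_\ell(\hat H^0)$ varies continuously on the parameter sphere; and they are $\Wk_k$-equivariant by direct inspection against the actions~\eqref{eq:action_U_k} and~\eqref{eq:action_W_k}, since a sign-flip in $v_i$ swaps $\alpha_i\leftrightarrow 1+\alpha_i$ on the halfspace labels and a permutation of the $v_i$ permutes the halfspace labels and the $\lambda_i$ coherently. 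Composition with radial projection from $0$ therefore yields $\Wk_k$-equivariant maps $Y_{d,k}\to S(U_k^{\oplus j})$ and $X_{d,k}\to S(W_k\oplus U_k^{\oplus j})$, contradicting the hypothesis of~(ii).

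The one real subtlety I foresee is the infinity-point check in part~(i): without the observation that $v_i=\pm e_{d+1}$ forces a nonzero coordinate of $\phi_{\mathcal{M}}$, an algebraic zero of the test map would not necessarily correspond to an arrangement of \emph{genuine} affine hyperplanes, and the whole configuration space / test map reduction would collapse. Once this is established, the rest — continuity and equivariance of the test maps, and the normalization that converts a fixed-point-free map into a map to the representation sphere — is a routine unwinding of the definitions fixed in Sections~\ref{subsec:action on configuration space}--\ref{subsec:test maps}.
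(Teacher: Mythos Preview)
Your argument is correct and complete. The paper does not supply a proof of this proposition at all, treating it as a routine consequence of the configuration space/test map setup; your write-up fills in exactly the details that are implicit there, including the one genuine subtlety (ruling out the poles $\pm e_{d+1}$ at a zero of $\phi_{\mathcal M}$), and the contrapositive with radial projection for part~(ii) is the standard step the paper takes for granted.
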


It is worth pointing out that $0\in \im\phi_{\mathcal{M}}$ if and only if  $0\in \im\psi_{\mathcal{M}}$, while the existence of an $\Wk_k$-equivariant map $Y_{d,k} \longrightarrow S(U_{k}^{\oplus j})$ implies the existence of a $\Wk_k$-equivariant map $X_{d,k} \longrightarrow S(W_k \oplus U_{k}^{\oplus j})$ but not vice versa.

The homotopy class of the restrictions of the test maps $\phi_{\mathcal{M}}$ and $\psi_{\mathcal{M}}$ on the set of points with non-trivial stabilizer (as maps avoiding the origin) is independent of the choice of the masses~$\mathcal M$, by the following proposition.

\begin{proposition}
Let $\mathcal{M}$ and $\mathcal{M'}$ be collections of $j$ masses in~$\R^d$.
Then 
\begin{compactenum}[\rm ~~~(i)]
\item $0\notin \im \phi_{\mathcal{M}}|_{Y_{d,k}^{>1}}$ and $0\notin \im \psi_{\mathcal{M}}|_{X_{d,k}^{>1}}$, \label{item:avoiding-zero}
\item $\phi_{\mathcal{M}}|_{Y_{d,k}^{>1}}$ and $\phi_{\mathcal{M'}}|_{Y_{d,k}^{>1}}$ are $\Wk_k$-homotopic as maps 
$Y_{d,k}^{>1}\longrightarrow U_k^{\oplus j}{\setminus}\{0\}$, and \label{item:phi-homotopy}
\item $\psi_{\mathcal{M}}|_{X_{d,k}^{>1}}$ and $\psi_{\mathcal{M'}}|_{X_{d,k}^{>1}}$ are $\Wk_k$-homotopic as maps 
$X_{d,k}^{>1}\longrightarrow (W_k\oplus U_k^{\oplus j}){\setminus}\{0\}$. \label{item:psi-homotopy}
\end{compactenum}
\end{proposition}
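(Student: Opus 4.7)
The plan is to establish part (i) by a direct geometric case analysis on the non-free parts, and then to derive the homotopy statements (ii) and (iii) from the observation that both test maps depend linearly (in fact affinely) on the collection of masses. The main obstacle lies in (i): identifying, for each type of stabilizer, an orthant that collapses onto an affine hyperplane and therefore carries measure zero. Once this combinatorial book-keeping is in place, the homotopy statements follow essentially for free from the linearity of $\phi_\mathcal{M}$ and $\psi_\mathcal{M}$ in $\mathcal{M}$.

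For (i), I first analyze the product test map on $Y_{d,k}^{>1}$. Given $(v_1,\dots,v_k)$ with $v_s=\pm v_r$ for some $s<r$, I produce an index $\alpha\in(\Z/2)^k$ for which the orthant $\mathcal{O}^{\mathcal{H}}_\alpha$ degenerates: if $v_s=v_r$ choose any $\alpha$ with $\alpha_s\ne\alpha_r$, and if $v_s=-v_r$ choose $\alpha_s=\alpha_r$. In both cases $H^{\alpha_s}_{v_s}\cap H^{\alpha_r}_{v_r}$ reduces to $H^0_{v_s}\cap H^1_{v_s}$, the affine hyperplane determined by $v_s$, so the entire orthant is contained in that hyperplane and has $\mu_i$-measure zero for every $i$. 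Hence every block of $\phi_\mathcal{M}$ has the entry $-\tfrac1{2^k}\ne0$ at this $\alpha$. For the join test map on $X_{d,k}^{>1}$ I split into two cases. If $\lambda_1\cdots\lambda_k=0$, some $\lambda_i$ vanishes, so the $W_k$-summand $(\lambda_1-\tfrac1k,\dots,\lambda_k-\tfrac1k)$ cannot be zero. Otherwise all $\lambda_i>0$ and the non-free condition forces $\lambda_s=\lambda_r$ with $v_s=\pm v_r$; the argument just given yields $\phi_\mathcal{M}(v_1,\dots,v_k)\ne 0$, so the $U_k^{\oplus j}$-summand $(\lambda_1\cdots\lambda_k)\,\phi_\mathcal{M}(v_1,\dots,v_k)$ is nonzero.

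For (ii) and (iii) I will use the straight-line interpolation $\mathcal{M}_t:=(1-t)\mathcal{M}+t\mathcal{M}'$ applied componentwise, $t\in[0,1]$. Since the measure of an orthant is linear in the underlying measure, and since the $W_k$-summand of $\psi_\mathcal{M}$ does not depend on $\mathcal M$ at all, I obtain
\[
\phi_{\mathcal{M}_t}\,=\,(1-t)\phi_\mathcal{M}+t\phi_{\mathcal{M}'},\qquad
\psi_{\mathcal{M}_t}\,=\,(1-t)\psi_\mathcal{M}+t\psi_{\mathcal{M}'}.
\]
The candidate homotopies $H(x,t):=\phi_{\mathcal{M}_t}(x)$ and $K(x,t):=\psi_{\mathcal{M}_t}(x)$ are then continuous, and automatically $\Wk_k$-equivariant since each individual $\phi_{\mathcal{M}_t}$ and $\psi_{\mathcal{M}_t}$ is. For nonvanishing of $H$ on $Y_{d,k}^{>1}\times[0,1]$ and of $K$ on $X_{d,k}^{>1}\times[0,1]$, I observe that the case analysis from (i) uses only the property that each constituent measure vanishes on every affine hyperplane, and this property is stable under convex combinations of finite Borel measures. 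Thus the same distinguished $\alpha$ (respectively, the same split into the two cases in $X_{d,k}^{>1}$) that witnesses nonvanishing for $\mathcal{M}$ and $\mathcal{M}'$ also witnesses nonvanishing for every $\mathcal{M}_t$, completing the homotopies.
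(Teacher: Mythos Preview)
Your proof is correct and follows essentially the same approach as the paper: both use the straight-line homotopy between the two test maps in the target space and verify that it avoids the origin on the non-free part. Your passage through the interpolated measures $\mathcal{M}_t$ is just a repackaging of this linear homotopy, as you yourself observe via $\phi_{\mathcal{M}_t}=(1-t)\phi_{\mathcal{M}}+t\phi_{\mathcal{M}'}$.

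The one genuine difference is in how nonvanishing is witnessed. The paper projects onto a one-dimensional isotypic component $V_\alpha^{\oplus j}$ of the $(\Z/2)^k$-representation $U_k^{\oplus j}$ (taking $\alpha$ with $\alpha_s=\alpha_r=1$ and all other entries zero) and checks that the image there equals $(\pm 1,\dots,\pm 1)$ independently of the masses. You instead point directly to a single coordinate of $\phi_{\mathcal{M}}$ indexed by a suitable $\alpha\in(\Z/2)^k$ and observe that the corresponding orthant degenerates to a set of measure zero, so that coordinate equals $-\tfrac{1}{2^k}$ for \emph{every} choice of masses with the hyperplane-null property. Your argument is more elementary---it avoids the decomposition $U_k\cong\bigoplus_{\alpha\neq 0}V_\alpha$ entirely---while the paper's version makes the $\Wk_k$-equivariance of the relevant affine constraint more visible. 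Both identify an affine hyperplane in the target, not passing through the origin, that contains the image of the test map at the given non-free point regardless of~$\mathcal{M}$.
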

\begin{proof}
{\rm (\ref{item:avoiding-zero})} If $(v_1,\ldots,v_k)\in Y_{d,k}^{>1}$, then $v_s=\pm v_r$ for some $1\leq s<r\leq k$.
Consequently, the corresponding hyperplanes $H_{v_i}$ and $H_{v_j}$ coincide, possibly with
opposite orientations.  
Thus some of the orthants associated to the collection of hyperplanes $(H_{v_1},\ldots,H_{v_k})$ 
are empty.
Consequently, Proposition \ref{prop:CS/TM} implies that $0\notin \im \phi_{\mathcal{M}}|_{Y_{d,k}^{>1}}$.

In the case where $\lambda_1 v_1 +\cdots + \lambda_k v_k\in X_{d,k}^{>1}$ the additional case $\lambda_s=0$ for some
$1\leq s\leq k$ may occur. If $\lambda_s=0$, then the $s$-th coordinate of $\psi(\lambda_1 v_1 +\cdots + \lambda_k v_k)\in W_k\oplus U_k^{\oplus j}$ is equal to $-\tfrac1k$, and hence
$0\notin \im \psi_{\mathcal{M}}|_{X_{d,k}^{>1}}$.

{\rm (\ref{item:phi-homotopy})} The equivariant homotopy between $\phi_{\mathcal{M}}|_{Y_{d,k}^{>1}}$ and $\phi_{\mathcal{M'}}|_{Y_{d,k}^{>1}}$ 
is just the linear homotopy in~$U_k^{\oplus j}$.
For this the linear homotopy should not have zeros, compare~\cite[proof of Cor.\,5.4]{bfhz_2015}.
It suffices to prove that for each point $(v_1,\ldots,v_k)\in Y_{d,k}^{>1}$, the points $\phi_{\mathcal{M}}(v_1,\ldots,v_k)$ and $\phi_{\mathcal{M'}}(v_1,\ldots,v_k)$ belong to some affine subspace of the test space that is not linear.

First, observe that $\R^{(\Z/2)^k}$, considered as a real $(\Z/2)^k$ representation, is the real regular representation of  $(\Z/2)^k$  and therefore it decomposes into the direct sum of all real irreducible representations.
For this we use the fact that all real irreducible representations of $(\Z/2)^k$ are $1$-dimensional.
The subspace $U_k$ seen as a real $(\Z/2)^k$ subrepresentation of $(\Z/2)^k$ decomposes as follows:
\begin{equation}
\label{eq 12}
U_k\cong\bigoplus_{\alpha\in(\Z/2)^k{\setminus}\{0\}}V_{\alpha}.	
\end{equation}
Here $V_{\alpha}$ is the $1$-dimensional real representation of $(\Z/2)^k$ determined by $\beta\cdot v=-v$ for $x\in V_{\alpha}$ if and only if $\alpha\cdot\beta:=\sum \alpha_s\beta_s=1\in\Z/2$, for $\beta\in (\Z/2)^k$.
The isomorphism \eqref{eq 12} is given by the direct sum of the projections $\pi_{\alpha}\colon U_k\longrightarrow V_{\alpha}$, $\alpha\in (\Z/2)^k{\setminus}\{0\}$,
\[
(y_{\beta})_{\beta\in  (\Z/2)^k{\setminus}\{0\}}\longmapsto \sum_{\alpha\cdot\beta=1}y_{\beta} - \sum_{\alpha\cdot\beta=0}y_{\beta}.
\]

Now let $v_s=\pm v_r$. 
Consider $\alpha\in(\Z/2)^k$ given by $\alpha_s=1=\alpha_r$ and $\alpha_{\ell}=0$ for $\ell\notin\{s,r\}$, and the corresponding projection 
$\pi_{\alpha}^{\oplus j}\colon U_k^{\oplus j}\longrightarrow V_{\alpha}^{\oplus j}$.
Then 
\[
\pi_{\alpha}^{\oplus j}\circ\phi_{\mathcal{M}}(v_1,\ldots,v_k)=\pi_{\alpha}^{\oplus j}\circ\phi_{\mathcal{M'}}(v_1,\ldots,v_k)=(\pm 1,\ldots,\pm 1).
\]

{\rm (\ref{item:psi-homotopy})} Likewise, the linear homotopy 
between $\psi_{\mathcal{M}}|_{X_{d,k}^{>1}}$ and $\psi_{\mathcal{M'}}|_{X_{d,k}^{>1}}$ is equivariant and avoids zero.
Let $\lambda_1 v_1 + \dots + \lambda_k v_k\in X_{d,k}^{>1}$.
If $\lambda:=\lambda_1\cdots\lambda_k\neq 0$, $\lambda_s=\lambda_r$ and $v_s=\pm v_r$, then 
\[
(\pi_{\alpha}^{\oplus j} \circ \eta \circ \psi_{\mathcal{M}})(\lambda_1 v_1 + \dots + \lambda_k v_k)=(\pi_{\alpha}^{\oplus j} \circ \eta \circ \psi_{\mathcal{M'}})(\lambda_1 v_1 + \dots + \lambda_k v_k)=(\pm\lambda,\ldots,\pm\lambda),
\]
where $\eta\colon W_k\oplus U_k^{\oplus j}\longrightarrow U_k^{\oplus j}$ is the projection.
Finally, in the case when $\lambda_s=0$ for some $1\leq s\leq k$, $\psi_{\mathcal{M}}(\lambda_1 v_1 + \dots + \lambda_k v_k)$ and $\psi_{\mathcal{M'}}(\lambda_1 v_1 + \dots + \lambda_k v_k)$ after projection to the $s$th coordinate of the subrepresentation $W_k$ are equal to $-\tfrac1k$.
\end{proof}

Denote the radial projections by
\[
\rho\colon U_k^{\oplus j}{\setminus}\{0\}\longrightarrow S(U_k^{\oplus j})
\quad\text{  and  }\quad
\nu\colon (W_k\oplus U_k^{\oplus j}){\setminus}\{0\}\longrightarrow S(W_k\oplus U_k^{\oplus j}).
\]
Note that $\rho$ and $\nu$ are $\Wk_k$-equivariant maps. 
Now the criterion stated in Proposition \ref{prop:CS/TM}\,(\ref{item:existence-of-maps}) can be strengthened as follows. 

\begin{theorem}
\label{prop:CS/TM-1}
Let $d\geq 1$, $k\geq 1$ and $j\geq 1$ be integers, and let $\mathcal{M}$ be a collection of $j$ masses in~$\R^d$.
We have the following two criteria:
\begin{compactenum}[\rm ~~~(i)]
	\item If there is no $\Wk_k$-equivariant map 
	\[
		Y_{d,k} \longrightarrow S(U_{k}^{\oplus j})
	\]
	whose restriction to $Y_{d,k}^{>1}$ is $\Wk_k$-homotopic to 
	$\rho\circ\phi_{\mathcal{M}}|_{Y_{d,k}^{>1}}$, then  $\Delta(j,k)\leq d$.
	\item If there is no $\Wk_k$-equivariant map 
	\[
		X_{d,k} \longrightarrow S(W_k\oplus U_{k}^{\oplus j})
	\]
	whose restriction to $X_{d,k}^{>1}$ is $\Wk_k$-homotopic to 
	$\nu\circ\psi_{\mathcal{M}}|_{X_{d,k}^{>1}}$, then  $\Delta(j,k)\leq d$.
\end{compactenum}
\end{theorem}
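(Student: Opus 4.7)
The plan is to prove both statements by contrapositive, reducing them to Proposition~\ref{prop:CS/TM}\,(\ref{item:test-map-zeros}) together with the homotopy invariance given by the previous proposition.

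Suppose $\Delta(j,k)>d$. Then there exists a collection $\mathcal{M}=(\mu_1,\ldots,\mu_j)$ of $j$ masses in $\R^d$ that cannot be equiparted by any arrangement of $k$ affine hyperplanes. By the contrapositive of Proposition~\ref{prop:CS/TM}\,(\ref{item:test-map-zeros}), this means $0\notin\im\phi_{\mathcal{M}}$, and equivalently $0\notin\im\psi_{\mathcal{M}}$. Consequently, the compositions
\[
\rho\circ\phi_{\mathcal{M}}\colon Y_{d,k}\longrightarrow S(U_k^{\oplus j})
\quad\text{and}\quad
\nu\circ\psi_{\mathcal{M}}\colon X_{d,k}\longrightarrow S(W_k\oplus U_k^{\oplus j})
\]
are well-defined. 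Since $\phi_{\mathcal{M}}$ and $\psi_{\mathcal{M}}$ are $\Wk_k$-equivariant and the radial projections $\rho$ and $\nu$ are $\Wk_k$-equivariant, both compositions are $\Wk_k$-equivariant.

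Next, I would observe that the restrictions of these equivariant maps to the non-free parts $Y_{d,k}^{>1}$ and $X_{d,k}^{>1}$ are literally equal to $\rho\circ\phi_{\mathcal{M}}|_{Y_{d,k}^{>1}}$ and $\nu\circ\psi_{\mathcal{M}}|_{X_{d,k}^{>1}}$ respectively, hence trivially $\Wk_k$-homotopic to these maps via the constant homotopy. Thus from the assumption $\Delta(j,k)>d$ one produces equivariant maps of the forbidden type. This contradicts the hypotheses of (i) and (ii), proving $\Delta(j,k)\le d$ in both cases.

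The only subtle point — and what makes the strengthened criterion genuinely useful later — is that the restriction $\nu\circ\psi_{\mathcal{M}}|_{X_{d,k}^{>1}}$ (up to $\Wk_k$-homotopy) does not depend on the particular counter-example $\mathcal{M}$ one chooses; this is exactly the content of the preceding proposition, combined with the fact that $\nu$ is $\Wk_k$-equivariant. Therefore the hypothesis of Theorem~\ref{prop:CS/TM-1} is in fact a condition on a single, canonical homotopy class of maps $X_{d,k}^{>1}\longrightarrow S(W_k\oplus U_k^{\oplus j})$, and the argument above applies uniformly for every counter-example $\mathcal{M}$. I expect no real obstacle in the proof itself: the content is essentially a bookkeeping application of Proposition~\ref{prop:CS/TM} together with the independence-of-$\mathcal{M}$ result, and the interest of the statement lies in setting up the relative equivariant obstruction theory problem addressed in the subsequent sections rather than in any technical difficulty of the proof.
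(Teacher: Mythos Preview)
Your proof is correct and matches the paper's intended argument: the paper does not give an explicit proof of Theorem~\ref{prop:CS/TM-1}, treating it as an immediate consequence of Proposition~\ref{prop:CS/TM} and the preceding homotopy-independence proposition, which is precisely what you spell out. One minor notational quibble: you reuse the symbol $\mathcal{M}$ for the hypothetical counter-example, shadowing the $\mathcal{M}$ fixed in the theorem statement; it would be cleaner to call the counter-example $\mathcal{M}'$ and then invoke the preceding proposition to identify $\nu\circ\psi_{\mathcal{M}'}|_{X_{d,k}^{>1}}$ with $\nu\circ\psi_{\mathcal{M}}|_{X_{d,k}^{>1}}$ up to $\Wk_k$-homotopy, exactly as you indicate in your final paragraph.
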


\subsection{Applying relative equivariant obstruction theory}
\label{subsec:obstruction theory}

In order to 
prove Theorems~\ref{th-matrices_num_odd_equip_exist}, \ref{th-main:D(j,2)}, and~\ref{th-main:D(j,3)} 
via Theorem~\ref{prop:CS/TM-1}(ii), we study the existence of an $\Wk_k$-equivariant map
\begin{equation}
\label{eq:equivariant map}
X_{d,k} \longrightarrow S(W_k\oplus U_{k}^{\oplus j}),
\end{equation}
whose restriction to $X_{d,k}^{>1}$ is $\Wk_k$-homotopic to $\nu\circ\psi_{\mathcal{M}}|_{X_{d,k}^{>1}}$ 
for some fixed collection~$\mathcal{M}$ of $j$ masses in~$\R^d$.
If we prove that such a map cannot exist, Theorems~\ref{th-matrices_num_odd_equip_exist}, \ref{th-main:D(j,2)}, and~\ref{th-main:D(j,3)} follow.

Denote by 
\[
    N_1:=(d+1)k-1
\]  
the dimension of the sphere $X_{d,k}=(S^d)^{*k}$, and by 
\[
    N_2:=(2^k-1)j+k-2
\]
the dimension of the sphere~$S(W_k\oplus U_{k}^{\oplus j})$.

If $N_1\leq N_2$, then 
\[
\dim X_{d,k}=N_1\leq \mathrm{conn}\big( S(W_k\oplus U_{k}^{\oplus j})\big) +1=N_2.
\]
Consequently, all obstructions to the existence of an $\Wk_k$-equivariant map \eqref{eq:equivariant map} vanish and so the map exists.
Here $\mathrm{conn}(\cdot)$ denotes the connectivity of a space.

Therefore, we assume that $N_1>N_2$, 
which is equivalent to the Ramos lower bound  $d\geq \tfrac{2^k-1}{k}j$. 
Furthermore, the following prerequisites for applying equivariant obstruction theory are satisfied:
\begin{compactitem}[ $\bullet$ ]
\item The $N_1$-sphere $X_{d,k}$ can be given the structure of a relative $\Wk_k$-CW complex $X:=(X_{d,k},X_{d,k}^{>1})$ 
with a free $\Wk_k$-action on $X_{d,k}{\setminus}X_{d,k}^{>1}$:
In Section~\ref{sec:cell model} we construct 
an explicit relative $\Wk_k$-CW complex that models $X_{d,k}$.
\item The sphere $S(W_k\oplus U_{k}^{\oplus j})$ is path connected and $N_2$-simple, except in the trivial case of $k=j=1$ when $N_2=0$. 
Indeed, the group $\pi_1(S(W_k\oplus U_{k}^{\oplus j}))$ is abelian for $N_2=1$ 
and trivial for $N_2>1$ and therefore its action on $\pi_{N_2}(S(W_k\oplus U_{k}^{\oplus j}))$ is trivial.
\item The $\Wk_k$-equivariant map $h\colon X_{d,k}^{>1}\longrightarrow S(W_k\oplus U_{k}^{\oplus j})$ 
given by the composition $h :=\nu\circ\psi_{\M}|_{X_{d,k}^{>1}}$, for a fixed collection of $j$ masses $\M$, 
serves as the base map for extension.
\end{compactitem}

\noindent
Since the sphere $S(W_k\oplus U_{k}^{\oplus j})$ is $(N_2-1)$-connected, the map $h$ can be extended to 
a $\Wk_k$-equivariant map from the $N_2$-skeleton $X^{(N_2)}\longrightarrow S(W_k\oplus U_{k}^{\oplus j})$.
A necessary criterion for the existence of the $\Wk_k$-equivariant map~(\ref{eq:equivariant map})
extending $h$
is that the $\Wk_k$-equivariant map $h=\nu\circ\psi_{\M}|_{X_{d,k}^{>1}}$ can be extended to a map from 
the $(N_2+1)$-skeleton~$X^{(N_2+1)}\longrightarrow S(W_k\oplus U_{k}^{\oplus j})$.

Given the above hypotheses, we can apply relative equivariant obstruction theory, as presented by 
tom Dieck~\cite[Sec.\,II.3]{tom1987transformation}, to decide the existence of such an extension. 

If $g$ is an equivariant extension of $h$ to the $N_2$-skeleton $X^{(N_2)}$, then the obstruction 
to extending $g$ to the $(N_2+1)$-skeleton is encoded by the equivariant cocycle 
\[
\oo(g)\in \mathcal{C}_{\Wk_k}^{N_2+1}\big(X_{d,k},X_{d,k}^{>1} \,; \,\pi_{N_2}(S(W_k\oplus U_{k}^{\oplus j}))\big).
\]
The $\Wk_k$-equivariant map $g\colon X^{(N_2)}\longrightarrow S(W_k\oplus U_{k}^{\oplus j})$ 
extends to $X^{(N_2+1)}$ if and only if $\oo(g)=0$. Furthermore, the cohomology class 
\[
[\oo(g)]\in \mathcal{H}_{\Wk_k}^{N_2+1}\big(X_{d,k},X_{d,k}^{>1} \,; \,\pi_{N_2}(S(W_k\oplus U_{k}^{\oplus j}))\big),
\]
vanishes if and only if the restriction $g|_{X^{(N_2-1)}}$ to the $(N_2-1)$-skeleton can be extended to 
the $(N_2+1)$-skeleton $X^{(N_2+1)}$. Any two extensions $g$ and $g'$ of $h$ to the $N_2$-skeleton are 
equivariantly homotopic on the $(N_2-1)$-skeleton and therefore the cohomology classes coincide: $[\oo(g)] = [\oo(g')]$. 
Hence, it suffices to compute the cohomology class 
$[\oo(\nu\circ\psi_{\M}|_{X^{(N_2)}})]$ for a fixed collection of $j$ masses $\M$ with the property that 
$0\notin \im (\psi_{\M}|_{X^{(N_2)}})$.

Let $f$ be the attaching map for an $(N_2+1)$-cell $\theta$ and $e$ its corresponding basis element 
in the cellular chain group $C_{N_2+1}(X_{d,k},X_{d,k}^{>1})$.
Then 
\[
\oo(\nu\circ\psi_{\M}|_{X^{(N_2)}})(e) = [\nu\circ\psi_{\M}\circ f|_{\partial\theta}]
\]
is the homotopy class of the map represented by the composition
\[
\xymatrix@1{
 \partial\theta_j\ar[r]^{f|_{\partial\theta}} & X^{(N_2)}\ar[rrr]^{\nu\circ\psi_{\M}|_{X^{(N_2)}}}  & & & S(W_k\oplus U_{k}^{\oplus j}).
}
\]  
 Since $\partial\theta$ and $S(W_k\oplus U_{k}^{\oplus j})$ are spheres of the same dimension $N_2$, the homotopy class 
$[\nu\circ\psi_{\M}\circ f|_{\partial\theta}]$ is determined by the degree of the map 
$\nu\circ\psi_{\M}\circ f|_{\partial\theta}$.
Here we assume that the $\Wk_k$-CW structure on $X_{d,k}$ is endowed with cell orientations, and in addition an orientation on the sphere $S(W_k\oplus U_{k}^{\oplus j})$ is fixed in advance.
Therefore, the degree of the map $\nu\circ\psi_{\M}\circ f|_{\partial\theta}$ is well-defined.

Let $\alpha:=\psi_{\M}\circ f|_{\partial\theta}$. In order to compute the degree of the map $\nu\circ\alpha$ and 
consequently the obstruction cocycle evaluated at~$e$, fix the collection of measures as follows.
Let $\M$ be the collection of masses $(I_1,\ldots,I_j)$ where $I_{r}$ is the mass concentrated 
on the segment $\gamma((t_r^{1},t_r^{2}))$ of the moment curve in~$\R^d$
\[
\gamma(t)=(t,\tbinom{t}{2},\tbinom{t}{3},\ldots, \tbinom{t}{d} )^t,
\]
such that 
\[
\ell< t_1^1<t_1^2<t_2^1<t_2^2<\cdots<t_j^1<t_j^2,
\]
for an integer $\ell$,  $0\leq \ell\leq d-1$.
The intervals $(I_1,\ldots,I_j)$ determined by numbers $t_r^1<t_r^2$ can be chosen in such a way that $0\notin \im (\psi_{\M}|_{X^{(N_2)}})$.
For every concrete situation in Section \ref{sec:proofs} this is verified directly.

Now consider the following commutative diagram:
\[
\xymatrix{
\partial\theta\ar[r]^{f|_{\partial\theta}}\ar[d] & X^{(N_2)}\ar[rrr]^{\psi_{\M}|_{X^{(N_2)}}} \ar[d] & & & W_k\oplus U_{k}^{\oplus j}{\setminus}\{0\} \ar[d]\ar[r]^{\nu} & S(W_k\oplus U_{k}^{\oplus j})\\
\theta\ar[r]^{f}  & X^{(N_2+1)}\ar[rrr]^{\psi_{\M}|_{X^{(N_2+1)}}}  & & & W_k\oplus U_{k}^{\oplus j}
}
\]
where the vertical arrows are inclusions, and the composition of the lower horizontal maps is denoted by $\beta:=\psi_{\M}|_{X^{(N_2+1)}}\circ f$.
Furthermore, let $B_{\varepsilon}(0)$ be a ball with center $0$ in $W_k\oplus U_{k}^{\oplus j}$ of sufficiently small radius~$\varepsilon>0$.
Set $\widetilde{\theta}:=\theta{\setminus}\beta^{-1}(B_{\varepsilon}(0))$.
Since $\dim\theta=\dim W_k\oplus U_{k}^{\oplus j}$ we can assume 
that the set of zeros $\beta^{-1}(0)\subset\relint\theta$ is finite, say of cardinality $r\geq0$.
Again, in every calculation presented in Section \ref{sec:proofs} this assumption is explicitly verified.
The function $\beta$ is a restriction of the test map and therefore {\em the points in $\beta^{-1}(0)$ correspond to 
arrangements of $k$ hyperplanes $\H$ in $\relint\theta$ that equipart $\M$}.
Moreover, the facts that the measures are intervals on a moment curve and that each hyperplane of the arrangement from  $\beta^{-1}(0)$ cuts the moment curve in $d$ distinct points imply that each zero in $\beta^{-1}(0)$ is isolated and transversal. 
The boundary of $\widetilde{\theta}$ consists of the boundary $\partial\theta$ and $r$ disjoint copies of 
$N_2$-spheres $S_1,\ldots,S_r$, one for each zero of $\beta$ on $\theta$.
Consequently, the fundamental class of $\partial\theta$ is equal to the sum of fundamental classes $\sum [S_i]$ 
in $H_{N_1}(\widetilde{\theta};\Z)$.
Here the  fundamental class of $\partial\theta$ is determined by the cell orientation inherited from the $\Wk_k$-CW structure on $X_{d,k}$.
The fundamental classes of $[S_i]$ are determined in such a way that the equality $[\partial\theta]=\sum [S_i]$ holds.
Thus 
\[
\sum (\nu\circ\beta|_{\widetilde\theta})_{*}([S_i]) = (\nu\circ\beta|_{\widetilde\theta})_{*}([\partial\theta])=(\nu\circ \alpha)_{*}([\partial\theta])=\deg(\nu\circ\alpha)\cdot[S(W_k\oplus U_{k}^{\oplus j})].
\]
Recall, we have fixed the orientation on the sphere $S(W_k\oplus U_{k}^{\oplus j})$ and so the fundamental class $[S(W_k\oplus U_{k}^{\oplus j})]$ is also completely determined.
On the other hand,
\[
\sum (\nu\circ\beta|_{S_i})_{*}([S_i])=\Big(\sum\deg(\nu\circ\beta|_{S_i})\Big)\cdot[S(W_k\oplus U_{k}^{\oplus j})].
\]
Hence, $\deg(\nu\circ\alpha) = \sum\deg(\nu\circ\beta|_{S_i})$ where the sum ranges over all arrangements of $k$ hyperplanes $\H$ in $\relint\theta$ that equipart $\M$; consult~\cite[Prop.\,IV.4.5]{outerelo2009}.
In other words,
\begin{equation}
	\label{eq:obstruction_cocycle}
\oo(\nu\circ\psi_{\M}|_{X^{(N_2)}})(e) = [\nu\circ\psi_{\M}\circ f|_{\partial\theta}]
=\deg(\nu\circ\alpha)\cdot\zeta = \sum\deg(\nu\circ\beta|_{S_i})\cdot\zeta,
\end{equation}
where $\zeta\in \pi_{N_2}(S(W_k\oplus U_{k}^{\oplus j}))\cong\Z$ is a generator, and the sum ranges over all 
arrangements of $k$ hyperplanes $\H$ in $\relint\theta$ that equipart~$\M$.

If in addition we assume that all local degrees $\deg(\nu\circ\beta|_{S_i})$ are $\pm 1$ and that the number 
of arrangements of $k$ hyperplanes $\H$ in $\relint\theta$ that equipart $\M$ is odd, then we 
conclude that  $\oo(\nu\circ\psi_{\M}|_{X^{(N_2)}})(e)\neq 0$. It will turn out that in many situations this 
information implies that the cohomology class $[\oo(\nu\circ\psi_{\M})]$ is not zero, and consequently the related 
$\Wk_k$-equivariant map~(\ref{eq:equivariant map}) does not exist, concluding the proof of corresponding Theorems~\ref{th-matrices_num_odd_equip_exist}, \ref{th-main:D(j,2)}, and~\ref{th-main:D(j,3)}.

\section{A regular cell complex model for the join configuration space}
\label{sec:cell model}
In this section, motivated by methods used in \cite{Z19} and \cite{Z131},  we construct a regular $\Wk_k$-CW model for the join configuration space $X_{d,k} = (S^d)^{*k} \cong S(\R^{(d+1)\times k})$ such that $X_{d,k}^{>1}$ is a $\Wk_k$-CW subcomplex.
Consequently, $(X_{d,k},X_{d,k}^{>1})$ has the structure of a relative $\Wk_k$-CW complex.
For simplicity the cell complex we construct is denoted by $X:=(X_{d,k},X_{d,k}^{>1})$ as well.
The cell model is obtained in two steps:
\begin{compactenum}[\rm (1)]
\item the vector space $\R^{(d+1)\times k}$ is decomposed into a union of disjoint relatively open cones (each containing the origin in its closure) on which the $\Wk_k$-action operates linearly permuting the cones, and then
\item the open cells of a regular $\Wk_k$-CW model are obtained as intersections of these relatively open cones with the unit sphere~$S(\R^{(d+1)\times k})$.
\end{compactenum}

\noindent
The explicit relative $\Wk_k$-CW complex we construct here is an essential object needed for the study of the existence of $\Wk_k$-equivariant maps $X_{d,k} \longrightarrow S(W_k\oplus U_{k}^{\oplus j})$ via the relative equivariant obstruction theory of tom Dieck~\cite[Sec.\,II.3]{tom1987transformation}.

\subsection{Stratifications by cones associated to an arrangement}
The first step in the construction of the $\Wk_k$-CW model is an appropriate stratification of the ambient space $\R^{(d+1)\times k}$.
First we introduce the notion of the stratification of a Euclidean space and collect some relevant properties.

\begin{definition} \label{def_stratification_by_cones}
Let $\A$ be an arrangement of linear subspaces in a Euclidean space $E$. 
A \emph{stratification of $E$ (by cones) associated to $\A$} is a finite collection $\C$ of subsets of $E$ that satisfies the following properties:
\begin{compactenum}[\normalfont (i)]
\item $\C$ consists of finitely many non-empty relatively open polyhedral cones in $E$.
\label{def_item_pw_disj_open_pol_cones}
\item $\C$ is a partition of $E$, i.e., $E = \biguplus_{C \in \mathcal{C}} C$.
\label{def_item_R_is_union_of_strata}
\item The closure $\overline C$ of every cone $C \in \C$ is a union of cones in $\C$.
\label{def_item_closure_is_union_of_strata}
\item Every subspace $A \in \A$ is a union of cones in $\C$. 
\label{def_item_subspaces_unions_of_strata}
\end{compactenum}
An element of the family $\C$ is called a \emph{stratum}.
\end{definition}

\begin{example}
\label{ex:flag_stratification}
Let $E$ be a Euclidean space of dimension $d$, 
let $L$ be a linear subspace of codimension $r$, where $1\leq r\leq d$, and let $\A$ be the arrangement $\{L\}$.
Choose a flag that terminates at $L$, i.e., fix a sequence of linear subspaces in $E$
\begin{equation}
\label{eq:flag-1}
E = L^{(0)}\supset L^{(1)}\supset\dots\supset L^{(r)}= L,	
\end{equation}
so that $\dim L^{(i)} =d-i$.
The family $\C$ associated to the flag \eqref{eq:flag-1} consists of $L$ and of the connected components of the successive complements
\[
L^{(0)}{\setminus}L^{(1)},\ L^{(1)}{\setminus}L^{(2)}, \ \ldots \ , L^{(r-1)}{\setminus}L^{(r)}.
\]
A $L^{(i)}$ is a hyperplane in $L^{(i-1)}$, each of the complements $L^{(i-1)}{\setminus}L^{(i)}$ has two connected components. 
This indeed yields a stratification by cones for the arrangement $\A$ in $E$.  
\end{example}

\begin{definition} \label{def_common_refinement}
Let $(\A_1,\A_2, \dots, \A_n)$ be a collection of arrangements of linear subspaces in the Euclidean space $E$
and let $(\C_1,\C_2 \dots, \C_n)$ be the associated collection of stratifications of $E$ by cones.
The \emph{common refinement} of the stratifications is the family
\[
\C := \{ C_1 \cap C_2 \cap \dots \cap C_n  \neq \emptyset : C_i \in \C_i \text{ for all }i \}.
\]
\end{definition}

In order to verify that the common refinement of stratifications is again a stratification, we use the following elementary lemma.  

\begin{lemma}\label{lem_boundary_intersection_convex_sets}
Let $A_1,\dots,A_n$ be relatively open convex sets in $E$ that have non-empty intersection, 
$A_1\cap\dots\cap A_n\neq\emptyset$. 
Then the following relation holds for the closures:
\[
\overline{A_1\cap\dots\cap A_n} = \overline{A_1} \cap\dots\cap \overline{A_n}.
\]
\end{lemma}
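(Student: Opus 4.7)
The plan is to establish the nontrivial inclusion $\overline{A_1}\cap\cdots\cap\overline{A_n}\subseteq\overline{A_1\cap\cdots\cap A_n}$; the reverse inclusion is immediate since $A_1\cap\cdots\cap A_n$ is contained in each $\overline{A_i}$, hence so is its closure, and $\overline{A_i}$ is closed.

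For the main inclusion, I would exploit the standard fact that for a convex set $A$, any point $y$ in its relative interior and any point $x$ in its closure satisfy $[y,x)\subseteq\operatorname{relint}(A)$. Since each $A_i$ is assumed to be relatively open, we have $\operatorname{relint}(A_i)=A_i$. First I would fix a point $y\in A_1\cap\cdots\cap A_n$, which exists by hypothesis. Given any $x\in\overline{A_1}\cap\cdots\cap\overline{A_n}$, the cited fact applied separately to each $A_i$ yields $[y,x)\subseteq A_i$ for every $i=1,\dots,n$, hence $[y,x)\subseteq A_1\cap\cdots\cap A_n$. Letting the parameter along the segment tend to $x$ produces a sequence in $A_1\cap\cdots\cap A_n$ converging to $x$, so $x\in\overline{A_1\cap\cdots\cap A_n}$.

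The only step that requires any care is the ``segment lemma'' $[y,x)\subseteq\operatorname{relint}(A)$ for convex $A$, $y\in\operatorname{relint}(A)$, $x\in\overline{A}$. I would either invoke it as a standard result from convex analysis or give a two-line proof: write $x=\lim x_m$ with $x_m\in A$ and for any $\lambda\in(0,1]$ check that $(1-\lambda)y+\lambda x$ is the limit of $(1-\lambda)y+\lambda x_m\in A$, while an open neighborhood of $y$ inside $\operatorname{aff}(A)\cap A$ can be scaled down by $(1-\lambda)$ and translated to produce a relatively open neighborhood of $(1-\lambda)y+\lambda x$ still contained in $A$. This is the only place where the relative openness of each $A_i$ and the nonemptiness of the intersection are both used; everything else is formal. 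I do not anticipate any real obstacle, but the nonemptiness hypothesis is essential, as can be seen by taking two disjoint relatively open convex sets whose closures meet at a single boundary point.
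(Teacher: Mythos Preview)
Your proof is correct and follows essentially the same approach as the paper: fix a point $y$ in the common intersection, take any $x$ in the intersection of the closures, and use the segment lemma to conclude that the half-open segment from $y$ to $x$ lies in $A_1\cap\cdots\cap A_n$, giving a sequence converging to $x$. The paper states the segment fact without justification, whereas you go further and sketch its proof; otherwise the arguments are identical.
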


\begin{proof}
The inclusion ``$\subseteq$'' follows directly. 
For the opposite inclusion take $x \in \overline{A_1} \cap\dots\cap \overline{A_n}$. 
Choose a point $y \in A_1\cap\dots\cap A_n\neq\emptyset$ and consider the line segment $(x,y] := \{ \lambda x + (1-\lambda) y : 0 \le \lambda < 1 \}$. 
As each $A_i$ is relatively open, the segment $(x,y]$ is contained in each of the $A_i$ and consequently it is contained in $A_1\cap\dots\cap A_n$.
Thus we obtain a sequence in this intersection converging to $x$, 
which implies that $x \in \overline{A_1\cap\dots\cap A_n}$.
\end{proof}

\begin{proposition}
Given stratifications by cones $\C_1,\C_2 \dots, \C_n$ associated to  
linear subspace arrangements $\A_1,\A_2, \dots, \A_n$, their common refinement is a stratification by cones associated to the subspace arrangement $\A := \A_1\cup\dots\cup\A_n$.
\end{proposition}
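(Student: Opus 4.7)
The plan is to verify the four axioms of Definition~\ref{def_stratification_by_cones} for $\C$ with respect to $\A = \A_1 \cup \cdots \cup \A_n$. Three of them follow routinely from the corresponding property of each $\C_i$; the only nontrivial check is axiom~(i), where one must argue that a nonempty intersection of relatively open polyhedral cones is again a relatively open polyhedral cone. Lemma~\ref{lem_boundary_intersection_convex_sets} will do most of the heavy lifting for axiom~(iii).

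For axiom~(i), finiteness of $\C$ is clear since $\C$ injects into the finite product $\C_1 \times \cdots \times \C_n$. Every nonempty intersection $C = C_1 \cap \cdots \cap C_n$ with $C_i \in \C_i$ is an intersection of finitely many polyhedral cones and is therefore itself a polyhedral cone. For relative openness, I would first establish the identity $\aff(C) = \aff(C_1) \cap \cdots \cap \aff(C_n)$: picking $x \in C$ and any $y$ in the right-hand intersection, a sufficiently short segment from $x$ toward $y$ stays in each $C_i$ by relative openness of $C_i$ in $\aff(C_i)$, so $y-x$ lies in the linear span of $C - x$. Then, for each $x \in C$, the intersection of relative balls $B_i \subseteq C_i$ around $x$ in $\aff(C_i)$ yields a relative ball in $\aff(C_1) \cap \cdots \cap \aff(C_n) = \aff(C)$ containing $x$ and contained in $C$, as required.

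Axiom~(ii) is immediate from the fact that each $\C_i$ partitions $E$: any $x \in E$ lies in a unique $C_i \in \C_i$, hence in a unique nonempty intersection $C_1 \cap \cdots \cap C_n \in \C$. For axiom~(iii), Lemma~\ref{lem_boundary_intersection_convex_sets} applied to $C_1,\ldots,C_n$ gives
\[
\overline{C_1 \cap \cdots \cap C_n} \;=\; \overline{C_1} \cap \cdots \cap \overline{C_n}.
\]
Each $\overline{C_i}$ is by hypothesis a union of strata in $\C_i$, so distributing the intersection over these unions exhibits the closure as a union of nonempty intersections $D_1 \cap \cdots \cap D_n$ with $D_j \in \C_j$, i.e., as a union of strata in $\C$.

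For axiom~(iv), any $A \in \A$ lies in some $\A_i$ and is thus a union of strata in $\C_i$; each such stratum of $\C_i$ is in turn a disjoint union of strata of $\C$, obtained by intersecting with the strata of the remaining $\C_j$, so $A$ is a union of strata in $\C$. The main obstacle in the whole argument is the convex-geometric content of step~(i): guaranteeing that $C$ is relatively open in its own affine hull rather than merely in the possibly larger $\aff(C_1) \cap \cdots \cap \aff(C_n)$, which is handled by the identity $\aff(C) = \aff(C_1) \cap \cdots \cap \aff(C_n)$ established above.
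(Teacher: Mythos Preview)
Your proof is correct and follows the same overall approach as the paper: verify the four axioms of Definition~\ref{def_stratification_by_cones}, using Lemma~\ref{lem_boundary_intersection_convex_sets} for axiom~(iii). The only difference is one of detail: the paper dismisses axioms~(i) and~(ii) as immediate from the definition of the common refinement, whereas you supply a genuine argument for the relative openness in~(i) via the identity $\aff(C)=\bigcap_i\aff(C_i)$, which is arguably the one point that deserves justification.
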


\begin{proof}
Properties (\ref{def_item_pw_disj_open_pol_cones}) and (\ref{def_item_R_is_union_of_strata}) of Definition~\ref{def_stratification_by_cones} follow immediately from the definition of the common refinement. 
To verify property (\ref{def_item_subspaces_unions_of_strata}), observe that a subspace $A_t\in\A_{t}$ is a union of strata from $\C_{t}$, say $A_t=\bigcup_{s} U_{t,s}$ where $U_{t,s}\in \C_{t}$. 
Hence, taking the union of intersections $C_1 \cap \dots \cap U_{t,s} \cap \dots \cap C_n$ for all $C_i \in \C_i$ where $i\neq t$, and all $U_{t,s}$ gives~$A_{t}$. 
Property (\ref{def_item_closure_is_union_of_strata}) follows from Lemma~\ref{lem_boundary_intersection_convex_sets}.
\end{proof}

\begin{example}\label{ex:flag_stratification_ref}
Let $E$ be a Euclidean space of dimension $d$ and let $\A=\{L_1,\ldots,L_s\}$ be an arrangement of linear subspaces of $E$.
As in Example~\ref{ex:flag_stratification}, for each of the subspaces $L_i$ in the arrangement $\A$ fix a flag $L_i^{(s)}$ and form the corresponding stratifications $\C_1,\ldots,\C_s$.
The common refinement of stratifications  $\C_1,\ldots,\C_s$ is a stratification by cones associated to the subspace arrangement $\A$. 
\end{example}

An arrangement of linear subspaces is \emph{essential} if the intersection of the 
subspaces in the arrangement is~$\{0\}$. 

\begin{proposition}
The intersection of a stratification $\C$ of $E$ by cones associated to an essential linear subspace arrangement with the sphere $S(E)$ gives a regular CW-complex.
\end{proposition}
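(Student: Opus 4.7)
My plan is to construct a regular CW structure on $S(E)$ whose open cells are the non-empty intersections $C \cap S(E)$ for $C \in \C$, verifying the CW and regularity axioms. First, I would use essentiality to pin down the stratum $\{0\}$. Since $\bigcap_{A \in \A} A = \{0\}$ and, by property~(iv) of Definition~\ref{def_stratification_by_cones}, each $A \in \A$ is a union of strata, the singleton $\{0\}$ is itself a disjoint union of strata, hence a single stratum in $\C$. Every other stratum $C \in \C$ is then a relatively open polyhedral cone of dimension at least $1$ with $0 \notin C$ (otherwise $0 \in C$ would clash with disjointness of strata from $\{0\}$). Setting $e_C := C \cap S(E)$ for $C \in \C \setminus \{\{0\}\}$, the family $\{e_C\}$ partitions $S(E)$: each non-zero stratum is invariant under positive scalars, so the radial projection $\pi \colon E \setminus \{0\} \to S(E)$, $x \mapsto x/\|x\|$, restricts on $C$ to a quotient map whose fibers are open rays through points of $e_C$; in particular $\pi$ identifies $C$ homeomorphically with $e_C \times \R_{>0}$.

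Next I would show that each $e_C$ is an open topological cell of dimension $\dim C - 1$ with closure $\overline{e_C} = \overline{C} \cap S(E)$ homeomorphic to a closed ball. Because $C$ is a cone, any point of $\overline{C} \cap S(E)$ is a radial limit of points in $C$, so $\overline{e_C} = \overline{C} \cap S(E)$. The closure $\overline{C}$ is a closed polyhedral cone of dimension $\dim C$, and $\overline{C} \neq E$, since otherwise $C = \relint \overline{C} = E$ would contain $0$. A standard fact about proper closed polyhedral cones then yields $\overline{C} \cap S(E) \cong D^{\dim C - 1}$, with $e_C$ as the topological interior. Closure-finiteness follows from property~(iii) of Definition~\ref{def_stratification_by_cones} applied to $\overline{C}$:
\[
\overline{e_C} \;=\; \overline{C} \cap S(E) \;=\; \bigsqcup_{C' \subseteq \overline{C},\; C' \neq \{0\}} e_{C'}.
\]
Regularity is then automatic: fixing any homeomorphism $\Phi \colon D^{\dim C - 1} \to \overline{e_C}$ that sends the open ball onto $e_C$ gives a characteristic map which is a homeomorphism. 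The weak topology coincides with the subspace topology from $S(E)$ because $\C$ is finite.

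The main technical hurdle is the identification $\overline{C} \cap S(E) \cong D^{\dim C - 1}$. The delicate point is that $\overline{C}$ need not be pointed: its lineality subspace $L := \overline{C} \cap (-\overline{C})$ may be nontrivial, so the naive picture of a pointed cone projecting onto a transversal hyperplane fails. I would handle this by writing $\overline{C} = L \oplus Q$ for a pointed polyhedral cone $Q$ in $L^{\perp}$ and using the join decomposition $S(L \oplus L^{\perp}) = S(L) * S(L^{\perp})$, together with the fact that $Q \cap S(L^{\perp}) \cong D^{\dim Q - 1}$ (a pointed cone meets a concentric sphere in a closed spherical polytope), to conclude that $\overline{C} \cap S(E) \cong S^{\dim L - 1} * D^{\dim Q - 1} \cong D^{\dim C - 1}$.
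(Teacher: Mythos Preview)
Your proposal is correct and follows the same strategy as the paper's (very terse) proof: both identify $\{0\}$ as a stratum by essentiality, conclude that every other stratum avoids the origin, and take the cells to be the intersections $C\cap S(E)$. The paper's argument stops after observing that no stratum contains a line through the origin and that closures decompose into lower strata, simply asserting the regular CW structure; you go further and actually verify that each closed cell $\overline{C}\cap S(E)$ is homeomorphic to a disk via the lineality decomposition $\overline{C}=L\oplus Q$ and the join identity $S^{\dim L-1}*D^{\dim Q-1}\cong D^{\dim C-1}$. This is a genuine detail the paper omits, since $\overline{C}$ need not be pointed even though the open stratum $C$ contains no line.

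One small point to patch: for the join formula to yield a disk rather than a sphere you need $Q\neq\{0\}$, i.e.\ $\overline{C}$ is not a nonzero linear subspace. This holds because otherwise $C=\relint\overline{C}=\overline{C}$ would contain~$0$, contradicting what you already established.
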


\begin{proof}
The elements $C \in \C$ are relative open polyhedral cones. As $\{0\}$ is a stratum by itself, none of the strata contains a line through the origin. 
Thus $C\cap S(E)$ is an open cell, whose closure $\overline C\cap S(E)$ is a
finite union of cells of the form $C'\cap S(E)$, so we get a regular CW complex. 
\end{proof}

\subsection{A stratification of $\R^{(d+1)\times k}$}
Now we introduce the stratification of $\R^{(d+1)\times k}$ that will give us a $\Wk_k$-CW model for $X_{d,k}$. 
One version of it, $\C$, arises from the construction in the previous section.
However, we also give combinatorial descriptions of relatively-open convex cones in the 
stratification $\C'$ directly, for which the action of $\Wk_k$ is evident. We then verify
that $\C$ and $\C'$ coincide. 

\subsubsection{Stratification}
Let elements $x \in \R^{(d+1) \times k}$ be written as $x=(x_1,\dots, x_k)$ where $x_i = (x_{t,i})_{t \in [d+1]}$ is the $i$-th column of the matrix $x$. 
Consider the arrangement $\A$  consisting of the following subspaces:
\begin{eqnarray*}
L_r         &:=& 	\{(x_1,\dots, x_k)\in \R^{(d+1) \times k} : x_r = 0\},\qquad 1\le r\le k\\
L_{r,s}^{+} &:=&    \{(x_1,\dots, x_k)\in \R^{(d+1) \times k} : x_r - x_s = 0\},\quad1\le r<s \le k\\
L_{r,s}^{-} &:=&    \{(x_1,\dots, x_k)\in \R^{(d+1) \times k} : x_r + x_s = 0\},\quad1\le r<s \le k.
\end{eqnarray*}
With each subspace we associate a flag:
\begin{compactenum}[\rm (i)]
\item With $L_r=\{ x_r = 0 \}$ we associate  
\begin{multline*}
\qquad\R^{(d+1)\times k} \supset \{ x_{1,r} = 0 \} \supset \{ x_{1,r} = x_{2,r}= 0 \}\supset \dots\supset\\
\{ x_{1,r} = x_{2,r}=\dots= x_{d+1,r}= 0 \},
\end{multline*}
\item With $L_{r,s}^{+}=\{x_r - x_s = 0 \}$ we associate 
\begin{multline*}
\qquad\R^{(d+1)\times k} \supset \{ x_{1,r}-x_{1,s} = 0 \} \supset \{ x_{1,r}-x_{1,s} =  x_{2,r}-x_{2,s}= 0 \}\supset \dots\supset\\
\{x_{1,r}-x_{1,s} =  x_{2,r}-x_{2,s}=\dots= x_{d+1,r}-x_{d+1,s}= 0 \},
\end{multline*}
\item $L_{r,s}^{-}=\{x_r + x_s = 0 \}$ we associate 
\begin{multline*}
\qquad\R^{(d+1)\times k} \supset \{ x_{1,r}+x_{1,s} = 0 \} \supset \{ x_{1,r}+x_{1,s} =  x_{2,r}+x_{2,s}= 0 \}\supset \dots\supset\\
\{x_{1,r}+x_{1,s} =  x_{2,r}+x_{2,s}=\cdots= x_{d+1,r}+x_{d+1,s}= 0 \}.
\end{multline*}
\end{compactenum}
The construction from Example~\ref{ex:flag_stratification} shows how every subspace in $\A$ leads to a stratification by cones of $\R^{(d+1) \times k}$.
The stratifications associated to the subspaces $L_r,L_{r,s}^+,L_{r,s}^-$ are denoted by $\C_r,\C_{r,s}^{+},\C_{r,s}^{-}$, respectively. 
Now, if we apply Example~\ref{ex:flag_stratification_ref} to this concrete situation we obtain the stratification by cones $\C$ of $\R^{(d+1)\times k}$ associated to the subspace arrangement $\A$.
This means that each stratum of $\C$ is a non-empty intersection of strata from the stratifications $\C_r,\C_{r,s}^{+},\C_{r,s}^{-}$ where $1\le r<s \le k$.

\subsubsection{Partition}
Let us fix:
\begin{compactitem}[ $\bullet$ ]
\item a permutation $\sigma:=(\sigma_1,\sigma_2,\ldots,\sigma_k)\equiv (\sigma_1\sigma_2\ldots\sigma_k) \in\Sym_k$, $\sigma\colon t\mapsto \sigma_t$,
\item a collection of signs $S:=(s_1,\ldots,s_k)\in\{+1,-1\}^k$, and
\item integers $I:=(i_1,\ldots,i_k)\in\{1,\ldots,d+2\}^k$.	
\end{compactitem}
Furthermore, define $x_0$ to be the origin in $\R^{(d+1)\times k}$, $\sigma_0=0$ and $s_0=1$.
Define
\[
C_I^S(\sigma)=C_{i_1,\ldots,i_k}^{s_1,\ldots,s_k}(\sigma_1,\sigma_2,\ldots,\sigma_k)\subseteq \R^{(d+1)\times k}
\]
to be the set of all points $(x_1,\ldots,x_k)\in \R^{(d+1)\times k}$, $x_i=(x_{1,i},\ldots,x_{d+1,i})$, such that for each $1\leq t\leq k$,
\begin{compactitem}[ $\bullet$ ]
\item if  $1\leq i_t\leq d+1$, then $s_{t-1}x_{i_t,\sigma_{t-1}}<s_{t}x_{i_t,\sigma_{t}}$ with $s_{t-1}x_{i',\sigma_{t-1}}=s_{t}x_{i',\sigma_{t}}$ for every $i'<i_t$,
\item if $i_t=d+2$, then $s_{i_{t-1}}x_{\sigma_{t-1}}=s_{i_{t}}x_{\sigma_{t}}$.
\end{compactitem}
Any triple $(\sigma|I|S)\in\Sym_k\times \{1,\ldots, d+2\}^k\times \{+1,-1\}^k$ is called a \emph{symbol}.
In the notation of symbols we abbreviate signs $\{+1,-1\}$ by $\{+,-\}$.
The defining set of ``inequalities'' for the stratum $C_I^S(\sigma)$ is briefly denoted by:
\begin{eqnarray*}
C_I^S(\sigma)&=&C_{i_1,\ldots,i_k}^{s_1,\ldots,s_k}(\sigma_1,\sigma_2,\ldots,\sigma_k)\\
&=&\{ (x_1,\ldots,x_k)\in \R^{(d+1)\times k} : 
0<_{i_1}s_1x_{\sigma_1}<_{i_2}s_2x_{\sigma_2}<_{i_3}\cdots<_{i_k}s_kx_{\sigma_k}\},
\end{eqnarray*}
where $y<_iy'$, for $1\leq i\leq d+1$, means that $y$ and $y'$ agree in the first $i-1$ coordinates and at the $i$-th coordinate $y_i~<~y'_i$.
The inequality $y<_{d+2}y'$ denotes that $y=y'$. 
Each set $C_I^S(\sigma)$ is the relative interior of a polyhedral cone in ($\R^{d+1})^k$ of codimension $(i_1-1)+\cdots+(i_k-1)$, i.e.,
\[
\dim C_{i_1,\ldots,i_k}^{s_1,\ldots,s_k}(\sigma_1,\sigma_2,\ldots,\sigma_k)=(d+2)k-(i_1+\cdots+i_k).
\]
Let $\C'$ denote the family of strata $C_I^S(\sigma)$ defined by all symbols, i.e.,
\[
\C'=\{C_I^S(\sigma) : (\sigma|I|S)\in \Sym_k\times \{1,\ldots, d+2\}^k\times \{+1,-1\}^k\}.
\]
Different symbols can define the same set, and
\[
C_I^S(\sigma)\cap C_{I'}^{S'}(\sigma)\neq\emptyset \ \Longleftrightarrow \ C_I^S(\sigma) = C_{I'}^{S'}(\sigma).
\]
In order to verify that the family $\C'$ is a partition of $\R^{(d+1)\times k}$ it remains to prove that it is a covering.

\begin{lemma}
$\bigcup\C'=\R^{(d+1)\times k}$.
\end{lemma}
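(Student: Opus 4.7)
The inclusion $\bigcup \mathcal{C}' \subseteq \R^{(d+1)\times k}$ is immediate from the definition, so the task is to show that every point is captured by some symbol. I would proceed constructively: given $x=(x_1,\ldots,x_k)\in\R^{(d+1)\times k}$, I would manufacture a triple $(\sigma|I|S)$ explicitly so that $x\in C_I^S(\sigma)$, by (i) signing each column so that it becomes lex-nonnegative, (ii) sorting the resulting signed columns, and (iii) reading off the indices $i_t$ from the places where consecutive signed columns first disagree.

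More precisely, first I would choose, for each $i\in\{1,\ldots,k\}$, a sign $\epsilon_i\in\{+1,-1\}$ with $\epsilon_i x_i\geq_{\mathrm{lex}} 0$: if $x_i=0$ set $\epsilon_i=+1$, otherwise take $\epsilon_i=\sgn(x_{j,i})$ for $j$ the smallest coordinate with $x_{j,i}\neq 0$. Next I would pick any permutation $\sigma\in\Sym_k$ that arranges the signed columns lex-non-decreasingly,
\[
\epsilon_{\sigma_1}x_{\sigma_1}\leq_{\mathrm{lex}}\epsilon_{\sigma_2}x_{\sigma_2}\leq_{\mathrm{lex}}\cdots\leq_{\mathrm{lex}}\epsilon_{\sigma_k}x_{\sigma_k},
\]
breaking ties arbitrarily, and set $S:=(\epsilon_{\sigma_1},\ldots,\epsilon_{\sigma_k})$. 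Finally, with the convention $s_0 x_{\sigma_0}=0$, for each $t=1,\ldots,k$ I would define $i_t:=d+2$ if $s_{t-1}x_{\sigma_{t-1}}=s_tx_{\sigma_t}$, and otherwise let $i_t\in\{1,\ldots,d+1\}$ be the first coordinate at which these two vectors disagree. By construction the chain $0<_{i_1}s_1x_{\sigma_1}<_{i_2}\cdots<_{i_k}s_kx_{\sigma_k}$ is satisfied, which is exactly the defining condition for $x\in C_I^S(\sigma)$.

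There is no genuine obstacle here; the mild subtlety is the bookkeeping in two edge cases. The first is columns with $x_i=0$: such columns must appear at the very start of the chain (immediately after $0$) with $i_t=d+2$, and the sign choice $\epsilon_i=+1$ makes this automatic. The second is lex-ties between distinct indices, which likewise force $i_t=d+2$ and are absorbed by the arbitrary tie-breaking in the sorting step. Once these conventions are in place, the construction directly matches the symbol conventions laid out before the lemma, completing the proof.
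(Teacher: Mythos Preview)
Your proposal is correct and follows essentially the same argument as the paper: choose signs so each column becomes lex-nonnegative, sort the signed columns lexicographically, and read off the indices $i_t$ from the first position where consecutive signed columns differ (setting $i_t=d+2$ on ties). Your write-up is in fact slightly more explicit about the edge cases of zero columns and lex-ties than the paper's version.
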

\begin{proof}
Let $(x_1,\ldots,x_k)\in \R^{(d+1)\times k}$.
First, choose signs $r_1,\ldots,r_k\in\{+1,-1\}$ so that the vectors $r_1x_1,\ldots,r_kx_k$ are greater or equal to $0\in \R^{(d+1)\times k}$ with respect to the lexicographic order, i.e., the first non-zero coordinate of each of the vectors $r_ix_i$ is greater than zero.
The choice of signs is not unique if one of the vectors $x_i$ is zero.
Next, record a permutation $\sigma\in\Sym_k$ such that
\[
0<_{\rm lex}r_{\sigma_1}x_{\sigma_1}<_{\rm lex}r_{\sigma_2}x_{\sigma_2}
<_{\rm lex}\cdots<_{\rm lex}r_{\sigma_k}x_{\sigma_k},
\]
where $<_{\rm lex}$ denotes the lexicographic order.
The permutation $\sigma$ is not unique if $r_ix_i=r_tx_t$ for some $i\neq t$.
Define $s_i:=r_{\sigma_i}$.
Finally, collect coordinates $i_t$ where vectors $s_{t-1}x_{\sigma_{t-1}}$ and $s_{t}x_{\sigma_{t}}$ first differ, or put $i_t=d+2$ if they coincide.
Thus, $(x_1,\ldots,x_k)\in C_{i_1,\ldots,i_k}^{s_1,\ldots,s_k}(\sigma_1,\sigma_2,\ldots,\sigma_k)$.
\end{proof}

\begin{example}
\label{example:1}
Let $d=0$ and $k=2$. 
Then the plane $\R^2$ is decomposed into the following cones.
There are $8$ open cones of dimension $2$:
\[
\begin{array}{lll}
C_{1,1}^{+,+}(12)&=&\{(x_1,x_2)\in\R^2 : 0<x_1<x_2\},\\
C_{1,1}^{-,+}(12)&=&\{(x_1,x_2)\in\R^2 : 0<-x_1<x_2\},\\
C_{1,1}^{+,-}(12)&=&\{(x_1,x_2)\in\R^2 : 0<x_1<-x_2\},\\
C_{1,1}^{-,-}(12)&=&\{(x_1,x_2)\in\R^2 : 0<-x_1<-x_2\},\\
C_{1,1}^{+,+}(21)&=&\{(x_1,x_2)\in\R^2 : 0<x_2<x_1\},\\
C_{1,1}^{-,+}(21)&=&\{(x_1,x_2)\in\R^2 : 0<-x_2<x_1\},\\
C_{1,1}^{+,-}(21)&=&\{(x_1,x_2)\in\R^2 : 0<x_2<-x_1\},\\
C_{1,1}^{-,-}(21)&=&\{(x_1,x_2)\in\R^2 : 0<-x_2<-x_1\}.
\end{array}
\]
Furthermore, there are $8$ cones of dimension $1$:
\[
\begin{array}{lllll}
C_{1,2}^{+,+}(12)&=&C_{1,2}^{+,+}(21)&=&\{(x_1,x_2)\in\R^2 : 0<x_1=x_2\},\\
C_{1,2}^{-,+}(12)&=&C_{1,2}^{+,-}(21)&=&\{(x_1,x_2)\in\R^2 : 0<-x_1=x_2\},\\
C_{1,2}^{+,-}(12)&=&C_{1,2}^{-,+}(21)&=&\{(x_1,x_2)\in\R^2 : 0<x_1=-x_2\},\\
C_{1,2}^{-,-}(12)&=&C_{1,2}^{-,-}(21)&=&\{(x_1,x_2)\in\R^2 : 0<-x_1=-x_2\},\\
C_{2,1}^{+,+}(12)&=&C_{2,1}^{-,+}(12)&=&\{(x_1,x_2)\in\R^2 : 0=x_1<x_2\},\\
C_{2,1}^{+,-}(12)&=&C_{2,1}^{-,-}(12)&=&\{(x_1,x_2)\in\R^2 : 0=x_1<-x_2\},\\
C_{2,1}^{+,+}(21)&=&C_{2,1}^{-,+}(21)&=&\{(x_1,x_2)\in\R^2 : 0=x_2<x_1\},\\
C_{2,1}^{+,-}(21)&=&C_{2,1}^{-,-}(21)&=&\{(x_1,x_2)\in\R^2 : 0=x_2<-x_1\}.
\end{array}
\]
The origin in $\R^2$ is given by $C_{2,2}^{\pm,\pm}(12)=C_{2,2}^{\pm,\pm}(21)$.
The example illustrates a property of our decomposition of~$\R^{(d+1)\times k}$: 
There is a surjection from symbals to cones that is not a bijection, i.e., 
different symbols can define the same cones.
\end{example}
\begin{figure}[ht!]
\centering
\includegraphics[scale=1]{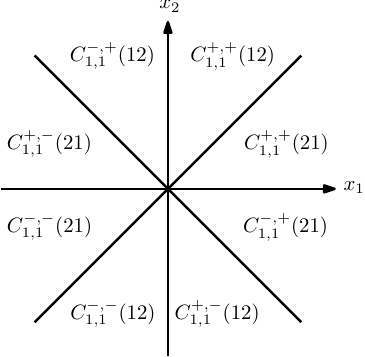}
\label{fig_d=0_k=2_strata}
\caption{\small Illustration of the stratification in Example~\ref{example:1}}
\end{figure}

\begin{example}
Let $d=2$ and $k=4$. 
The stratum associated to the symbol $(2143\,|\,2,3,1,4\,|\,+1,-1,+1,-1)$ can be described explicitly as follows.
\begin{multline*}
\left\{
\left(
\begin{array}{cccc}
x_{1,1}  & x_{1,2}  & x_{1,3} & x_{1,4} \\
x_{2,1}  & x_{2,2}  & x_{2,3} & x_{2,4} \\
x_{3,1}  & x_{3,2}  & x_{3,3} & x_{3,4}
\end{array}
\right) \in (\R^3)^4: \right.\\
\left.
\begin{array}{lllllllll}
0 &=& x_{1,2} &=& -x_{1,1} &<& x_{1,4} &=& -x_{1,3}\\
0 &<& x_{2,2} &=& -x_{2,1} & & x_{2,4} &=& -x_{2,3}\\
  & & x_{3,2} &<& -x_{3,1} & & x_{3,4} &=& -x_{3,3}
\end{array}
\right\}.
\end{multline*}
In particular, 
\[
C_{2,3,1,4}^{+,-,+,-}(2143)=C_{2,3,1,4}^{+,-,-,+}(2134).
\]

\end{example}

\subsubsection{$\C$ and $\C'$ coincide}
We proved that $\C$ is a stratification by cones of $\R^{(d+1)\times k}$, and that $\C'$ is a partition of $\R^{(d+1)\times k}$.
Since both $\C$ and $\C'$ are partitions it suffices to prove that for every symbol $(\sigma|I|S)\in\Sym_k\times \{1,\ldots, d+2\}^k\times \{+1,-1\}^k$ the cone 
$C_I^S(\sigma)\in\C'$ also belongs to $\C$. 

Consider the cone $C_I^S(\sigma)$ in $\C'$.
It is determined by
\begin{eqnarray*}
C_I^S(\sigma)&=&C_{i_1,\ldots,i_k}^{s_1,\ldots,s_k}(\sigma_1,\sigma_2,\ldots,\sigma_k)\\
&=&\{ (x_1,\ldots,x_k)\in \R^{(d+1)\times k} : 
0<_{i_1}s_1x_{\sigma_1}<_{i_2}s_2x_{\sigma_2}<_{i_3}\cdots<_{i_k}s_kx_{\sigma_k}\}.
\end{eqnarray*}
The defining inequalities for $C_I^S(\sigma)$ imply that $(x_1,\ldots,x_k)\in C_I^S(\sigma)$ if and only if 
\begin{compactitem}[ $\bullet$ ]
\item $0<_{\min\{i_1,\ldots,i_a\}}s_ax_a$ for $1\leq a\leq k$, and
\item $s_ax_a<_{\min\{i_{a+1},\ldots,i_b\}}s_bx_b$ for $1\leq a<b\leq k$,
\end{compactitem}
if and only if 
\begin{compactitem}[ $\bullet$ ]
\item $(x_1,\ldots,x_k)$ belongs to the appropriate one of two strata in the complement 
\[
{L_a}^{(\min\{i_1,\ldots,i_a\}-1)}\setminus{L_a}^{(\min\{i_1,\ldots,i_a\}-2)}
\]
of the stratification $\C_a$ depending on the sign $s_a$ where $1\leq a\leq k$, and
\item $(x_1,\ldots,x_k)$ belongs to the appropriate one of two strata in the complement
\[
{L_{a,b}^{s_as_b}}^{(\min\{i_{a+1},\ldots,i_b\}-1)}\setminus
{L_{a,b}^{s_as_b}}_{(\min\{i_{a+1},\ldots,i_b\}-2)}
\]
of the stratification $\C_{a,b}^{s_as_b}$ depending on the sign of the product $s_as_b$ where $1\leq a<b\leq k$. 
The product $s_as_b$, appearing in the ``exponent notation'' of $L_{a,b}^{s_as_b}$, is either ``$+$'' when the product $s_as_b=1$, or ``$-$'' when $s_as_b=-1$.
\end{compactitem}
Here we use the notation of Examples~\ref{ex:flag_stratification} and \ref{ex:flag_stratification_ref}.

Thus we have proved that $C_I^S(\sigma)\in\C$ and consequently $\C=\C'$.  

\subsection{The $\Wk_k$-CW model for $X_{d,k}$}
The action of the group $\Wk_k$ on the space $\R^{(d+1)\times k}$ induces an action on the family of strata $\mathcal{C}$ by as follows:
\begin{eqnarray}
\pi\cdot C^S_I(\sigma) &=& C^S_I(\pi\sigma),\label{eq:action of pi}\\
\varepsilon_t\cdot  C^S_I(\sigma) &=&\varepsilon_t\cdot C_{i_1,\ldots,i_k}^{s_1,\ldots,s_k}(\sigma_1,\sigma_2,\ldots,\sigma_k)\nonumber\\
                                  &=& C_{i_1,\ldots,i_k}^{s_1,\ldots,-s_t,\ldots,s_k}(\sigma_1,\sigma_2,\ldots,\sigma_k),\label{eq:action of epsilon}
\end{eqnarray}
where $\pi\in\Sym_k$ and $\varepsilon_1,\ldots,\varepsilon_k$ are the canonical generators of the subgroup $(\Z/2)^k$ of $\Wk_k$.

The $\Wk_k$-CW complex that models $X_{d,k}=S(\R^{(d+1)\times k})$ is obtained by intersecting each stratum $C^S_I(\sigma)$ with the unit sphere~$S(\R^{(d+1)\times k})$.
Each stratum is a relatively open cone that does not contain a line.
Therefore the intersection 
\[
D^S_I(\sigma)=D_{i_1,\ldots,i_k}^{s_1,\ldots,s_k}(\sigma_1,\sigma_2,\ldots,\sigma_k):=
C_{i_1,\ldots,i_k}^{s_1,\ldots,s_k}(\sigma_1,\sigma_2,\ldots,\sigma_k) \cap S(\R^{(d+1)\times k})
\]
is an open cell of dimension $(d+2)k-(i_1+\cdots+i_k)-1$.
The action of $\Wk_k$ is induced by \eqref{eq:action of pi} and \eqref{eq:action of epsilon}:
\begin{eqnarray}
\pi\cdot D^S_I(\sigma) &=& D^S_I(\pi\sigma),\label{eq:action of pi - 2}\\
\varepsilon_t\cdot  D^S_I(\sigma) &=&\varepsilon_t\cdot D_{i_1,\ldots,i_k}^{s_1,\ldots,s_k}(\sigma_1,\sigma_2,\ldots,\sigma_k)\nonumber\\
                                  &=& D_{i_1,\ldots,i_k}^{s_1,\ldots,-s_t,\ldots,s_k}(\sigma_1,\sigma_2,\ldots,\sigma_k).\label{eq:action of epsilon - 2}
\end{eqnarray}
Thus we have obtained a regular $\Wk_k$-CW model for $X_{d,k}$.
In particular, the action of the group $\Wk_k$ on the space $\R^{(d+1)\times k}$ induces a cellular action on the model.
\begin{theorem}
\label{th : CW-model}
Let $d\geq 1$ and $k\geq 1$ be integers, and $N_1=(d+1)k-1$. 
The family of cells 
\[
\{D^S_I(\sigma) : (\sigma|I|S)\neq (\sigma|d+2,\ldots,d+2|S)\}
\]
forms a finite regular $N_1$-dimensional $\Wk_k$-CW complex $X:=(X_{d,k},X_{d,k}^{>1})$ that models the join configuration space $X_{d,k}=S(\R^{(d+1)\times k})$.
It has 
\begin{compactitem}[ $\bullet$ ]
\item one full $\Wk_k$-orbit in maximal dimension $N_1$, and 
\item $k$ full $\Wk_k$-orbits in dimension $N_1-1$.
\end{compactitem}
The (cellular) $\Wk_k$-action on $X_{d,k}$ is given by \eqref{eq:action of pi - 2} and \eqref{eq:action of epsilon - 2}.
Furthermore the collection of cells 
\[
\{D^S_I(\sigma) : i_s=d+2\text{  for some  }1\leq s\leq k \}
\]
is a $\Wk_k$-CW subcomplex and models $X_{d,k}^{>1}$.
\end{theorem}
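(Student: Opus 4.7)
The plan is to assemble machinery that has already been built. The core observation is that $\C=\C'$ is a stratification by cones of $\R^{(d+1)\times k}$ associated to the arrangement $\A$, and that $\A$ is \emph{essential}: already the subspaces $L_1,\dots,L_k$ have $\bigcap_r L_r=\{0\}$. By the proposition of the previous subsection, intersecting $\C$ with the unit sphere $S(\R^{(d+1)\times k})$ therefore produces a regular CW complex. The only stratum of $\C$ that fails to meet the sphere is the origin, and this corresponds exactly to the excluded family of symbols $(\sigma\mid d+2,\dots,d+2\mid S)$, for which the defining relations collapse to $s_1x_{\sigma_1}=\dots=s_kx_{\sigma_k}=0$. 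Every other cone is a non-empty relatively open polyhedral cone not containing a line, and therefore meets the sphere in a single open cell $D_I^S(\sigma)$ of dimension $(d+2)k-(i_1+\cdots+i_k)-1$.

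Next I would count the orbits in the top two dimensions. A cell has dimension $N_1=(d+1)k-1$ iff $I=(1,\dots,1)$, and dimension $N_1-1$ iff exactly one $i_t$ equals $2$ and the others equal $1$. In either case the defining chain $0<_{i_1}s_1x_{\sigma_1}<_{i_2}\cdots<_{i_k}s_kx_{\sigma_k}$ uses only strict lexicographic inequalities in the first (and in the codimension-one case, also second) coordinate, so a single interior point of the cell reconstructs $\sigma$ and $S$ uniquely: the symbol is determined by the cell. Hence in dimension $N_1$ the cells are bijectively labeled by $\Sym_k\times\{+,-\}^k$, giving one free $\Wk_k$-orbit, while in dimension $N_1-1$ the $k$ possible positions for the ``$2$'' in $I$ produce exactly $k$ such orbits.

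For the $\Wk_k$-action, the linear action on $\R^{(d+1)\times k}$ permutes strata and commutes with restriction to the sphere; reading off how a permutation $\pi$ and a sign flip $\varepsilon_t$ transform the defining inequalities $0<_{i_1}s_1x_{\sigma_1}<_{i_2}\cdots<_{i_k}s_kx_{\sigma_k}$ reproduces exactly the formulas \eqref{eq:action of pi - 2} and \eqref{eq:action of epsilon - 2}, so the induced action is cellular and of the stated form.

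Finally, to identify the subcomplex modeling $X_{d,k}^{>1}$, recall that a unit-norm point $x=(x_1,\dots,x_k)$ lies in $X_{d,k}^{>1}$ iff $x_r=0$ for some $r$ or $x_r=\pm x_s$ for some $r\neq s$. Running the symbol-assignment procedure from the proof that $\bigcup\C'=\R^{(d+1)\times k}$, such a point is placed in some $C_I^S(\sigma)$ with $i_s=d+2$ for at least one $s$: the case $i_1=d+2$ records $x_{\sigma_1}=0$, and $i_s=d+2$ for $s>1$ records $s_{s-1}x_{\sigma_{s-1}}=s_sx_{\sigma_s}$. Conversely, every such cell lies in some $L_r$, $L_{r,s}^+$ or $L_{r,s}^-$, hence in $X_{d,k}^{>1}$. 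Since \eqref{eq:action of pi - 2} and \eqref{eq:action of epsilon - 2} leave the index vector $I$ untouched, this collection of cells is $\Wk_k$-invariant, so it is a genuine $\Wk_k$-CW subcomplex, completing the argument. The main obstacle is the cell-versus-symbol bookkeeping foreshadowed by Example~\ref{example:1}: in general several distinct symbols can describe the same cone, so one must carefully verify that in top and codimension-one dimensions the labeling is in fact one-to-one, which is precisely what makes the orbit counts come out clean.
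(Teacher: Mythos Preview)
Your proposal is correct and mirrors the paper's approach exactly: the paper does not give a separate proof of this theorem but rather states it as the summary of the machinery built up in the preceding subsections (the stratification $\C=\C'$, essentiality of $\A$, the proposition that intersecting with the sphere yields a regular CW complex, and the action formulas \eqref{eq:action of pi - 2}--\eqref{eq:action of epsilon - 2}), which is precisely what you assemble. Your explicit verification that symbols and cells are in one-to-one correspondence in the top two dimensions, and your identification of $X_{d,k}^{>1}$ with the union of cells having some $i_s=d+2$, make explicit what the paper leaves to the reader.
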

\begin{example}
\label{example:boundary-1}
Let $d\ge 1$ and $k\geq 2$ be integers with 
$dk=(2^k-1)j+\ell$, where $0\leq \ell\leq d-1$.
Consider the cell $\theta:=D^{+,+,+,\ldots,+}_{\ell+1,1,1,\ldots,1}(1,2,3,\ldots,k)$ of dimension $N_1-\ell=N_2+1$ in $X_{d,k}$.
It is determined by the following inequalities:
\[
0<_{\ell+1}x_1<_1 x_2<_1 \cdots <_1 x_k.
\] 
For the process of determining the boundary of $\theta$, depending on value of $\ell$, we distinguish the following cases.
\begin{compactenum}[\bf (1)]
\item Let $\ell=0$. 
      Then $\theta:=D^{+,+,+,\ldots,+}_{1,1,1,\ldots,1}(1,2,3,\ldots,k)$.
      The cells of codimension $1$ in the boundary of $\theta$ 
      are obtained by introducing one of the following extra equalities: 
      \[
      x_{1,1}=0\,,\quad x_{1,1}=x_{1,2}\,,\quad \ldots\quad x_{1,k-1}=x_{1,k}.
      \]
      Each of these equalities will give two cells of dimension $N_2$, hence in total $2k$
      cells of codimension $1$, in the boundary of $\theta$.
      \begin{compactenum}[\bf (a)]
      \item The equality $x_{1,1}=0$ induces cells:
      \[
      \qquad\qquad\quad \gamma_1:=D^{+,+,+,\ldots,+}_{2,1,1,\ldots,1}(1,2,3,\ldots,k),\qquad \gamma_2:=D^{-,+,+,\ldots,+}_{2,1,1,\ldots,1}(1,2,3,\ldots,k)
      \]
      that are related, as sets, via $\gamma_2=\varepsilon_1\cdot\gamma_1$.
      Both cells $\gamma_1$ and $\gamma_2$ belong to the linear subspace 
      \[
      V_1=\{(x_1,\ldots,x_k)\in \R^{(d+1)\times k} : x_{1,1}=0\}.  
      \]
      \item The equality $x_{1,r-1}=x_{1,r}$ for $2\leq r\leq k$ gives cells:
      \[
         \gamma_{2r-1}:=D^{+,+,+,\ldots,+}_{1,\ldots,1,2,1,\ldots,1}(1,\ldots,r-1,r,r+1,\ldots,k),
      \]
      \[
         \gamma_{2r}:=D^{+,+,+,\ldots,+}_{1,\ldots,1,2,1,\ldots,1}(1,\ldots,r,r-1,r+1,\ldots,k)
      \]
      satisfying $\gamma_{2r}=\tau_{r-1,r}\cdot\gamma_{2r-1}$. 
      In these cells the index $2$ in the subscript $1,\ldots,1,2,1,\ldots,1$ appears at the position $r$.
	  These cells belong to the linear subspace 
      \[
      V_r=\{(x_1,\ldots,x_k)\in \R^{(d+1)\times k} : x_{1,r-1}=x_{1,r}\}.  
      \]
      \end{compactenum}
      Let $e_{\theta}$ denote a generator in $C_{N_2+1}(X_{d,k},X_{d,k}^{>1})$ 
      that corresponds to the cell~$\theta$. 
      Furthermore let $e_{\gamma_1},\ldots,e_{\gamma_{2k}}$ denote generators in $C_{N_2}(X_{d,k},X_{d,k}^{>1})$ related to the cells $\gamma_1,\ldots,\gamma_{2k}$.\newline
      The boundary of the cell~$\theta$ is contained in the union of the linear subspaces $V_1,\ldots,V_k$.
      Therefore we can orient the cells $\gamma_{2i-1},\gamma_{2i}$ consistently with the orientation of $V_i$, $1\leq i\leq k$, that is given in such a way that
      \[
      \partial e_{\theta}=(e_{\gamma_1}+e_{\gamma_2})+(e_{\gamma_3}+e_{\gamma_4})+\cdots + (e_{\gamma_{2k-1}}+e_{\gamma_{2k}}).
      \]
      Consequently,
      \begin{equation}
      	\label{rel - 1}
      \partial e_{\theta}=(1+(-1)^d\varepsilon_1)\cdot e_{\gamma_1}+\sum_{i=2}^{k}
      (1+(-1)^d\tau_{i-1,i})\cdot e_{\gamma_{2i-1}}.
      \end{equation}
      
\item Let $\ell=1$. 
      Then $\theta:=D^{+,+,+,\ldots,+}_{2,1,1,\ldots,1}(1,2,3,\ldots,k)$. 
      Now the cells in the boundary of $\theta$ are obtained by introducing extra equalities: 
      \[
      x_{2,1}=0\,,\quad (0=)\,x_{1,1}=x_{1,2}\,,\quad \ldots\quad x_{1,k-1}=x_{1,k}.
      \]
      Each of these equalities, except for the second one, will give two cells of dimension $N_2$, 
      which yields $2(k-1)$ cells in total, in the boundary of $\theta$.
      The equality $x_{1,1}=x_{1,2}$ will give additional four cells in the boundary of $\theta$.
      \begin{compactenum}[\bf (a)]
      \item The equality $x_{2,1}=0$ induces cells:
      \[
      \qquad\qquad\quad \gamma_1:=D^{+,+,+,\ldots,+}_{3,1,1,\ldots,1}(1,2,3,\ldots,k),\qquad \gamma_2:=D^{-,+,+,\ldots,+}_{3,1,1,\ldots,1}(1,2,3,\ldots,k)
      \]
      that are related, as sets, via $\gamma_2=\varepsilon_1\cdot\gamma_1$.
      Notice that both cells $\gamma_1$ and $\gamma_2$ belong to the linear subspace 
      \[
      V_1=\{(x_1,\ldots,x_k)\in \R^{(d+1)\times k} : x_{1,1}=x_{2,1}=0\}.  
      \]
      \item The equality $x_{1,1}=x_{1,2}$ yields the cells
      \[
      \qquad\qquad\quad \gamma_3:=D^{+,+,+,\ldots,+}_{2,2,1,\ldots,1}(1,2,3,\ldots,k),\qquad \gamma_{31}:=D^{+,-,+,\ldots,+}_{2,2,1,\ldots,1}(1,2,3,\ldots,k),
      \]
      \[
      \qquad\qquad\quad \gamma_{32}:=D^{+,+,+,\ldots,+}_{2,2,1,\ldots,1}(2,1,3,\ldots,k),\qquad \gamma_{33}:=D^{-,+,+,\ldots,+}_{2,2,1,\ldots,1}(2,1,3,\ldots,k).
      \]
      They satisfy set identities $\gamma_{31}=\varepsilon_2 \cdot\gamma_3$, $\gamma_{32}=\tau_{1,2} \cdot\gamma_3$, and $\gamma_{33}=\varepsilon_1 \tau_{1,2} \cdot\gamma_3$.
      All four cells belong to the linear subspace 
      \[
      V_2=\{(x_1,\ldots,x_k)\in \R^{(d+1)\times k} : 0=x_{1,1}=x_{1,2}\}.  
      \]
      \item The equality $x_{1,r-1}=x_{1,r}$ for $3\leq r\leq k$ gives cells:
      \[
         \gamma_{2r-1}:=D^{+,+,+,\ldots,+}_{2,\ldots,1,2,1,\ldots,1}(1,\ldots,r-1,r,r+1,\ldots,k),
      \]
      \[
         \gamma_{2r}:=D^{+,+,+,\ldots,+}_{2,\ldots,1,2,1,\ldots,1}(1,\ldots,r,r-1,r+1,\ldots,k)
      \]
      satisfying $\gamma_{2r}=\tau_{r-1,r}\cdot\gamma_{2r-1}$. 
      In these cells the second index $2$ in the subscript $2,\ldots,1,2,1,\ldots,1$ appears at the position $r$.
	  These cells belong to the linear subspace 
      \[
      \qquad\qquad V_r=\{(x_1,\ldots,x_k)\in \R^{(d+1)\times k} : x_{1,1}=0,\,x_{1,r-1}=x_{1,r}\}.  
      \]
      \end{compactenum}
      Again $e_{\theta}$ denotes a generator in $C_{N_2+1}(X_{d,k},X_{d,k}^{>1})$ corresponding to $\theta$. 
      Let $e_{\gamma_1},e_{\gamma_2},e_{\gamma_3},e_{\gamma_{31}},e_{\gamma_{32}},e_{\gamma_{33}}, e_{\gamma_{4}}\ldots,e_{\gamma_{2k}}$ denote generators in $C_{N_2}(X_{d,k},X_{d,k}^{>1})$ related to the cells $\gamma_1,\gamma_2,\gamma_3,\gamma_{31},\gamma_{32},\gamma_{33},\ldots,\gamma_{2k}$.\newline
      The boundary of the cell~$\theta$, as before, is contained in the union of the linear subspaces $V_1,\ldots,V_k$.
      Therefore we can orient cells consistently with the orientation of $V_i$, $1\leq i\leq k$, that is given in such a way that
      \[
      \partial e_{\theta}=(e_{\gamma_1}+e_{\gamma_2})+(e_{\gamma_3}+e_{\gamma_{31}}+e_{\gamma_{32}}+e_{\gamma_{33}})+\cdots + (e_{\gamma_{2k-1}}+e_{\gamma_{2k}}).
      \]
      Consequently,
\begin{eqnarray}
\label{rel - 2}
\partial e_{\theta} &=& (1+(-1)^{d-1}\varepsilon_1)\cdot e_{\gamma_1}+\\
       & &(1+(-1)^d\varepsilon_2+(-1)^d\tau_{1,2}+(-1)^{d+d}\varepsilon_1\tau_{1,2})\cdot e_{\gamma_3}+\nonumber \\
      & & \sum_{i=3}^{k}(1+(-1)^d\tau_{i-1,i})\cdot e_{\gamma_{2i-1}}.\nonumber
\end{eqnarray}

\item Let $2\leq \ell \leq d-1$. 
      Then $\theta:=D^{+,+,+,\ldots,+}_{\ell+1,1,1,\ldots,1}(1,2,3,\ldots,k)$.
      The cells in the boundary of $\theta$ are now obtained by introducing following equalities: 
      \[
      x_{\ell+1,1}=0\,,\quad (0=)x_{1,1}=x_{1,2}\,,\quad \ldots\quad x_{1,k-1}=x_{1,k}.
      \]
      Each of them will give two cells of dimension $N_2$ in the boundary of $\theta$, all together $2k$.
      \begin{compactenum}[\bf (a)]
      \item The equality $x_{\ell+1,1}=0$ induces cells:
      \[
      \qquad\qquad\quad \gamma_1:=D^{+,+,+,\ldots,+}_{\ell+2,1,1,\ldots,1}(1,2,3,\ldots,k),\qquad \gamma_2:=D^{-,+,+,\ldots,+}_{\ell+2,1,1,\ldots,1}(1,2,3,\ldots,k)
      \]
      that are related, as sets, via $\gamma_2=\varepsilon_1\cdot\gamma_1$.
      Both cells $\gamma_1$ and $\gamma_2$ belong to the linear subspace 
      \[
      V_1=\{(x_1,\ldots,x_k)\in \R^{(d+1)\times k} : x_{1,1}=\cdots=x_{\ell+1,1} =0\}.  
      \]
      \item The equality $(0=)x_{1,1}=x_{1,2}$ gives the cells
      \[
      \qquad\qquad\quad 
      \gamma_3:=D^{+,+,+,\ldots,+}_{\ell+1,2,1,\ldots,1}(1,2,3,\ldots,k),\qquad 
      \gamma_4:=D^{+,-,+,\ldots,+}_{\ell+1,2,1,\ldots,1}(1,2,3,\ldots,k)
      \]
      that satisfy $\gamma_4=\varepsilon_2 \cdot\gamma_3$.
      Both cells belong to the linear subspace 
      \[
      \qquad\qquad V_2=\{(x_1,\ldots,x_k)\in \R^{(d+1)\times k} : x_{1,1}=\cdots=x_{\ell,1} =0,\,x_{1,1}=x_{1,2}\}.  
      \]
      \item The equality $x_{1,r-1}=x_{1,r}$ for $3\leq r\leq k$ gives cells:
      \[
         \gamma_{2r-1}:=D^{+,+,+,\ldots,+}_{\ell+1,\ldots,1,2,1,\ldots,1}(1,\ldots,r-1,r,r+1,\ldots,k),
      \]
      \[
         \gamma_{2r}:=D^{+,+,+,\ldots,+}_{\ell+1,\ldots,1,2,1,\ldots,1}(1,\ldots,r,r-1,r+1,\ldots,k)
      \]
      satisfying $\gamma_{2r}=\tau_{r-1,r}\cdot\gamma_{2r-1}$. 
      In these cells the index $2$ in the subscript $\ell+1,\ldots,1,2,1,\ldots,1$ appears at the position $r$.
	  These cells belong to the linear subspace 
      \[
      \qquad\qquad V_r=\{(x_1,\ldots,x_k)\in \R^{(d+1)\times k} : x_{1,1}=\cdots=x_{\ell,1} =0,\,x_{1,r-1}=x_{1,r}\}.  
      \]
      \end{compactenum}
      Again $e_{\theta}$ denotes a generator in $C_{N_2+1}(X_{d,k},X_{d,k}^{>1})$ that corresponds to the cell~$\theta$. 
      Furthermore $e_{\gamma_1},\ldots,e_{\gamma_{2k}}$ denote generators in $C_{N_2}(X_{d,k},X_{d,k}^{>1})$ related to the cells $\gamma_1,\ldots,\gamma_{2k}$.\newline
      As before, the boundary of the cell~$\theta$ is contained in the union of the linear subspaces $V_1,\ldots,V_k$.
      Thus we can orient cells $\gamma_{2i-1},\gamma_{2i}$ consistently with the orientation of $V_i$, $1\leq i\leq k$, that is given in such a way that 
      \[
      \partial e_{\theta}=(e_{\gamma_1}+e_{\gamma_2})+(e_{\gamma_3}+e_{\gamma_4})+\cdots + (e_{\gamma_{2k-1}}+e_{\gamma_{2k}}).
      \]
      Hence,
      \begin{equation}
      	\label{rel - 3}
      \qquad\partial e_{\theta}=(1+(-1)^{d-\ell}\varepsilon_1)\cdot e_{\gamma_1}+(1+(-1)^d\varepsilon_2)\cdot e_{\gamma_3} +\sum_{i=3}^{k}
      (1+(-1)^d\tau_{i-1,i})\cdot e_{\gamma_{2i-1}}.
      \end{equation}
\end{compactenum}
The relations \eqref{rel - 1}, \eqref{rel - 2} and \eqref{rel - 3} that we have now derived will be essential in the proofs of Theorems \ref{th-matrices_num_odd_equip_exist} and \ref{th-main:D(j,2)}.
	
\end{example}

\subsection{The arrangements parametrized by a cell}
\label{sec:parametrization}
In this section we describe all arrangements of $k$ hyperplanes parametrized by the cell 
\[
\theta:=D^{+,+,+,\ldots,+}_{\ell+1,1,1,\ldots,1}(1,2,3,\ldots,k),
\]
where $1\leq \ell\leq d-1$.
This description will be one of the key ingredients in Section~\ref{sec:proofs} when the obstruction cocycle is evaluated on the cell~$\theta$.

Recall that the cell~$\theta$ is defined as the intersection of the sphere $S(\R^{(d+1)\times k})$ and the cone given by the inequalities:
\[
0<_{\ell+1}x_1<_1 x_2<_1 \cdots <_1 x_k.
\] 
Consider the binomial coefficient moment curve $\hat{\gamma}\colon\R\longrightarrow\R^{d}$ defined by
\begin{equation}
\label{eq:moment-curve}
\hat{\gamma}(t)=(t,\tbinom{t}{2},\tbinom{t}{3},\ldots, \tbinom{t}{d} )^t.	
\end{equation}
After embedding $\R^d\longrightarrow\R^{d+1}, (\xi_1,\ldots,\xi_d)^t\longmapsto (1,\xi_1,\ldots,\xi_d)^t$ it corresponds to the curve $\gamma\colon\R\longrightarrow\R^{d+1}$ given
\[
\gamma(t)=(1,t,\tbinom{t}{2},\tbinom{t}{3},\ldots, \tbinom{t}{d} )^t.
\]
Consider the following points on the moment curve $\gamma$:
\begin{equation}
\label{eq:point_on_moment_curve}	 
q_1:=\gamma(0),\ldots,q_{\ell+1}:=\gamma(\ell).
\end{equation}
Next, recall that each oriented affine hyperplane $\hat{H}$ in $\R^d$ (embedded in $\R^{d+1}$) determines the unique linear hyperplane $H$ such that $\hat{H}=H\cap\R^d$, and almost vice versa.
Now, the family of arrangements parametrized by the (open) cell~$\theta$ is described as follows:

\begin{lemma}
\label{lem:parametrization-of-theta}
The cell~$\theta=D^{+,+,+,\ldots,+}_{\ell+1,1,1,\ldots,1}(1,2,3,\ldots,k)$ parametrizes all arrangements $\H=(H_1,\ldots,H_k)$ of $k$ linear hyperplanes in $\R^{d+1}$, where the order and orientation are fixed appropriately such that
\begin{compactitem}[ $\bullet$ ]
\item    $Q:=\{q_1,\ldots,q_{\ell}\}\subset H_1$,
\item    $q_{\ell+1}\notin H_1$, 
\item  $q_1\notin H_2,\ldots, q_1\notin H_k$, and
\item  $H_2,\dots,H_k$ have unit normal vectors with different (positive) first coordinates, that is,
       $|\{ \langle x_2,q_1\rangle, \langle x_3,q_1\rangle,\ldots,\langle x_k,q_1\rangle  \}|=k-1$.
\end{compactitem}
Here $x_i\in S(\R^{(d+1)\times k})$ is a unit normal vector of the hyperplane $H_i$, for $1\leq i\leq k$.
\end{lemma}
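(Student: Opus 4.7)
The plan is to verify the lemma by translating the cell-defining inequalities directly into linear conditions on the inner products $\langle x_i,q_s\rangle$, using the fact that the binomial-coefficient moment curve produces a lower-triangular unipotent incidence matrix.

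First, I would unpack the symbol defining $\theta$. Writing out the chain $0<_{\ell+1}x_1<_1x_2<_1\cdots<_1x_k$ coordinate-by-coordinate, a point $(x_1,\ldots,x_k)\in S(\R^{(d+1)\times k})$ lies in $\theta$ if and only if
\[
x_{1,1}=\cdots=x_{\ell,1}=0,\qquad x_{\ell+1,1}>0,\qquad 0<x_{1,2}<x_{1,3}<\cdots<x_{1,k}.
\]
(The first $\ell$ coordinates of $x_1$ vanish since $\ell\ge 1$, after which the subsequent $<_1$ relations strictly order the first coordinates of $x_2,\ldots,x_k$, all of them forced to be positive by transitivity with $x_{1,1}=0$.)

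Second, I would translate the incidence conditions. Since $q_s=\gamma(s-1)$ has $t$-th entry $\binom{s-1}{t-1}$, we have
\[
\langle x_i,q_s\rangle \;=\; \sum_{t=1}^{s}\binom{s-1}{t-1}\,x_{t,i},
\]
the sum truncating at $t=s$ because $\binom{s-1}{t-1}=0$ for $t>s$. Specializing to $s=1$ gives $\langle x_i,q_1\rangle=x_{1,i}$, so the cell condition $0<x_{1,2}<\cdots<x_{1,k}$ is exactly $q_1\notin H_i$ for each $i\ge 2$ together with the distinctness $|\{\langle x_i,q_1\rangle : i=2,\ldots,k\}|=k-1$.

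Third, I would handle the incidences of $H_1$. The linear map $x\mapsto Mx$ given by the matrix $M=(\binom{s-1}{t-1})_{s,t=1}^{d+1}$ sends the column $(x_{t,1})_t$ to the column $(\langle x_1,q_s\rangle)_s$. Because $M$ is lower triangular with ones on the diagonal it is invertible (in fact, unipotent), and therefore the vanishing of the first $\ell$ entries of $Mx$ is equivalent to the vanishing of the first $\ell$ entries of $x$. This yields the equivalence of $\{q_1,\ldots,q_\ell\}\subset H_1$ with $x_{1,1}=\cdots=x_{\ell,1}=0$. Under these vanishings only the diagonal term survives at index $\ell+1$, so $\langle x_1,q_{\ell+1}\rangle=x_{\ell+1,1}$, and $q_{\ell+1}\notin H_1$ translates to $x_{\ell+1,1}\neq 0$, with the sign fixed to $x_{\ell+1,1}>0$ by the cell.

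Finally, I would assemble the two directions. The forward implication is now a direct reading of Steps~1--3: any $(x_1,\ldots,x_k)\in\theta$ yields an arrangement of linear hyperplanes $H_i=\{y\in\R^{d+1}:\langle y,x_i\rangle=0\}$ satisfying every bullet point. For the converse, given $(H_1,\ldots,H_k)$ meeting the stated conditions, the normal of $H_1$ is determined up to sign; the condition $q_{\ell+1}\notin H_1$ guarantees $x_{\ell+1,1}\neq 0$, so the sign is fixed by $x_{\ell+1,1}>0$. For each $i\ge 2$, the normal of $H_i$ is determined up to sign with $x_{1,i}\neq 0$ (from $q_1\notin H_i$), and we orient it to obtain $x_{1,i}>0$; the distinctness of the values $\langle x_i,q_1\rangle$ then yields the unique reordering producing $x_{1,2}<\cdots<x_{1,k}$. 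Scaling to the unit sphere places the configuration in $\theta$.

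I do not expect any real obstacle: the argument is bookkeeping built on one structural input, namely the lower-triangular unipotent shape of the binomial matrix~$M$. That is precisely why the non-standard binomial moment curve~\eqref{eq:moment-curve} is preferable to the standard moment curve here, since it makes the equivalence ``first $\ell$ coordinates of $Mx$ vanish iff first $\ell$ coordinates of $x$ vanish'' trivially visible from the matrix itself.
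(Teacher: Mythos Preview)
Your proposal is correct and follows essentially the same approach as the paper's proof: both arguments hinge on the triangular structure of the binomial moment curve, translating the vanishing of $\langle x_1,q_s\rangle$ for $s\le\ell$ into the vanishing of the first $\ell$ coordinates of $x_1$, and reading the remaining cell inequalities as positivity and distinctness of the first coordinates $x_{1,2},\ldots,x_{1,k}$. Your phrasing via the lower-triangular unipotent matrix $M$ makes the key observation slightly more explicit than the paper's one-line remark that ``$q_i=\gamma(i-1)$ has only the first $i$ coordinates non-zero,'' but the content is the same.
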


\begin{proof}
Observe that 
$\{q_1,\ldots,q_{\ell}\}\subset H_1$ holds if and only if $\langle x_1,q_1\rangle=\langle x_1,q_2 \rangle=\cdots=\langle x_1,q_{\ell}\rangle=0$ if and only if $x_{1,1}=x_{2,1}=\cdots=x_{\ell,1}=0$:
This is true since we have the binomial moment curve, so $q_i=\gamma(i-1)$ has only the first
$i$ coordinates non-zero.
\\
Furthermore, $q_{\ell+1}\notin H_1$ holds if and only if $x_{\ell+1,1}\neq 0$;
choosing an appropriate orientation for $H_1$ we can assume that $x_{\ell+1,1}>0$.
\\
The third condition is equivalent to 
$0\notin \{ \langle x_2,q_1\rangle, \langle x_3,q_1\rangle,\ldots,\langle x_k,q_1\rangle  \}$,
that is, $x_{1,2},x_{1,3},\dots,x_{1,k}\neq0$. Choosing orientations of $H_2,\dots,H_k$ suitably
this yields $x_{1,2},x_{1,3},\dots,x_{1,k}>0$.
\\
Since the values $x_{1,2}=\langle x_2,q_1\rangle$, $x_{1,3}= \langle x_3,q_1\rangle$, \ldots, 
$x_{1,k}=\langle x_k,q_1\rangle $ are positive and distinct, we get 
$0<x_{1,2}<x_{1,3}<\cdots<x_{1,k}$ by choosing the right order on $H_2,\ldots,H_k$. 
\end{proof}

\section{Proofs}
\label{sec:proofs}

\subsection{Proof of Theorem \ref{th-matrices_corr_equipartitions}}

Let $d\ge 1$, $j\geq 1$, $\ell\geq 0$ and $k\geq 2$ be integers with the property that $dk =j(2^k-1)+ \ell$ for $0\leq \ell\leq d-1$.

 Consider a collection of $j$ ordered disjoint intervals $\M=(I_1, \ldots, I_j)$ along the moment curve $\gamma$.
 Let $Q=\{q_1,\dots,q_\ell\} \subset \gamma$ be a set of $\ell$ predetermined points that lie to the left of the interval $I_1$.
 We prove Theorem~\ref{th-matrices_corr_equipartitions} in two steps.

 \begin{lemma} \label{lem:matrices_lead_to_equipartitions}
Let $A$ be an $\ell$-equiparting matrix, that is, a binary matrix of size $k \times j2^k$ with one row of transition count $d-\ell$ and all other rows of transition count $d$ such that $A = (A_1, \dots, A_j)$ for Gray codes $A_1,\ldots,A_j$ with the property that the last column of $A_i$ is equal to the first column of $A_{i+1}$ for $1 \le i < j$. 
 Then $A$ determines an arrangement $\H$ of $k$ affine hyperplanes that equipart $\M=(I_1, \ldots, I_j)$ and one of the  hyperplanes passes through each point in $Q$.	
 \end{lemma}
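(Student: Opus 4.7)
The plan is to turn the combinatorial data of $A$ into an arrangement $\H$ by using the interpolation and total-positivity properties of the moment curve~$\gamma$. First I would subdivide each interval $I_i$ into $2^k$ consecutive subintervals $I_{i,1},\ldots,I_{i,2^k}$ of equal $\mu_i$-mass $1/2^k$. The target is that $I_{i,p}$ should lie inside the orthant $\mathcal{O}^{\H}_\alpha$ whose sign vector $\alpha\in(\Z/2)^k$ coincides with the $p$-th column of the block~$A_i$. By the Gray-code property of $A_i$, adjacent columns differ in exactly one coordinate, and by the matching condition ``last column of $A_i$ equals first column of $A_{i+1}$,'' the arc of $\gamma$ between $I_i$ and $I_{i+1}$ must not cross any hyperplane of~$\H$.

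Next I would read off the hyperplanes from the row-transition data of~$A$. For each row~$r$, I pick parameter values $t_{r,1}<\cdots<t_{r,m_r}$ on $\gamma$, placing one in the interior of every inter-subinterval gap where row $r$ changes its bit. The count $m_r$ equals the transition count of row $r$, so $m_r=d$ for all but one row and $m_r=d-\ell$ for the distinguished row. For the distinguished row I prepend the $\ell$ parameter values of $q_1,\ldots,q_\ell\in Q$, which by hypothesis lie to the left of~$I_1$; after this augmentation, every row has exactly~$d$ associated parameter values. I then define $H_r$ as the unique affine hyperplane in~$\R^d$ passing through the $d$ points $\hat{\gamma}(t_{r,1}),\ldots,\hat{\gamma}(t_{r,d})$. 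Existence and uniqueness follow from the non-vanishing Vandermonde determinant associated with the binomial moment curve; moreover, since a linear functional evaluated along $\hat{\gamma}$ is a polynomial of degree at most $d$ in $t$, the intersection $H_r\cap\hat{\gamma}$ consists of precisely these $d$ points, so $H_r$ crosses the curve transversally and nowhere else.

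To verify that $\H=(H_1,\ldots,H_k)$ equiparts $\M$ and that the distinguished hyperplane passes through every point of~$Q$, I would trace the orthant label as $t$ varies along the moment curve. The label can change only at a crossing of some $H_r$, and by construction each crossing flips precisely the bit recorded by the corresponding transition in~$A$. Hence $I_{i,p}$ ends up in the orthant specified by the $p$-th column of~$A_i$. Because a Gray code of length~$2^k$ visits each of the $2^k$ sign vectors exactly once, every orthant receives mass $1/2^k$ from $\mu_i$ for every $i$, which is the equipartition condition; and the distinguished hyperplane contains $Q$ by its very definition.

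The step I expect to require the most care is the simultaneous admissibility of the chosen parameters $t_{r,s}$: I must place them so that (i) no two rows accidentally share a parameter value (otherwise two hyperplanes would have $d$ common intersections on $\gamma$ and could coincide), and (ii) each subinterval falls strictly inside its intended orthant rather than on a hyperplane. Both conditions are open and generic inside the already-chosen inter-subinterval gaps, and the degree-$d$ polynomial bound on $H_r\cap\hat{\gamma}$ guarantees that no stray intersections of the constructed hyperplanes with $\gamma$ can appear outside the $d$ prescribed points, so a small perturbation of the $t_{r,s}$ suffices to realize $\H$.
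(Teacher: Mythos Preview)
Your argument is correct and follows essentially the same construction as the paper: subdivide each $I_i$ into $2^k$ equal pieces, place the $d$ (respectively $d-\ell$, augmented by the points of $Q$) intersection points of $H_r$ with $\gamma$ at the transitions of row~$r$, and invoke the moment-curve interpolation property to realize and orient each~$H_r$. You are in fact more explicit than the paper about the Vandermonde/degree-$d$ justification and about genericity, though your concern~(i) is automatically satisfied since a Gray-code transition flips exactly one bit, so no two rows ever compete for the same subdivision point.
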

 \begin{proof}
 Without loss of generality we assume that the first row of the matrix $A$ has transition count $d-\ell$ while rows $2$ through $k$ have  transition count $d$.
For a row $a_s$ of the matrix $A$, denote by $t_s$ its transition count, $1\leq s\leq k$.
 
 Place $j(2^k +1)$ ordered points $q_{\ell+1},\dots, q_{\ell+j(2^k +1)}$ on $\gamma$, such that 
 \[
 I_i =[q_{\ell+(i-1)2^k + i},\, q_{\ell+i2^{k} +i}]
 \]
 and each sequence of $2^k+1$ points divides $I_i$ into $2^k$ subintervals of equal length. \emph{Ordered} refers to the property that $q_r=\gamma(t_r)$ if $t_1< t_2 < \dots <t_{j(2^k+1)}$.

We now define the hyperplanes in $\H$ by specifying which of the points they pass through and then choosing their orientations.
Force the affine hyperplane $H_1$ to pass through all of the points in~$Q$. 
For $s=1,\dots, i$, the affine hyperplane $H_s$ passes through $x_{\ell+ r+i}$ if there is a bit change in row $a_s$ from entry $r$ to entry $r+1$ for $(i-1)2^k < r \le i2^k$.  
Orient $H_s$ such that the subinterval $[q_r,q_{r+1}]$ is on the positive side of $H_s$ if it corresponds to a $0$-entry in~$a_s$. Since each $A_1, \dots, A_j$ is a Gray code, the arrangement $\H$ is indeed an equipartition.
\vspace{-3pt}
\end{proof}

\begin{lemma} \label{lem:equip_lead_to_matrices}
Every arrangement of $k$ affine hyperplanes $\H$ that equiparts $\M=(I_1, \ldots, I_j)$ and where one of the  hyperplanes passes through each point of $Q$ induces a unique binary matrix $A$ as in Lemma~\ref{lem:matrices_lead_to_equipartitions}.	
\end{lemma}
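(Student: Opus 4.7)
The plan is to read off the matrix $A$ directly from the combinatorial pattern of the cuts that $\H$ makes along the moment curve $\gamma$. Order the subintervals produced by $\H$ inside $I_1 \cup \cdots \cup I_j$ from left to right along $\gamma$; the $(s,t)$-entry of $A$ is set to $0$ or $1$ according to which side of $H_s$ contains the $t$-th subinterval, with orientations chosen to make the first column of $A_1$ equal to $(0,\ldots,0)^t$. What must be checked is that this recipe actually produces a Gray code matrix with the correct gluing condition and the claimed transition counts. The engine behind the proof is a tight counting argument exploiting the fact that each hyperplane meets the moment curve $\gamma$ in at most $d$ points.

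First I would show that each interval $I_i$ is cut by $\H$ into exactly $2^k$ subintervals of equal length. Let $m_i$ be the number of arcs of $I_i \cap \gamma$ produced by $\H$. These arcs inherit labels in $\{0,1\}^k$ from the orthant containing them, and adjacent arcs differ in exactly one bit, so the sequence of labels is a walk on the $k$-cube. The equipartition of $I_i$ forces every orthant label to occur, so the walk visits all $2^k$ vertices of the cube and hence $m_i \geq 2^k$. On the other hand, each $H_s$ meets $\gamma$ in at most $d$ points, so the total number of cuts of $\gamma$ by $\H$ is at most $dk$; the $\ell$ points of $Q$ account for $\ell$ external cuts on $H_1$, leaving at most $dk - \ell = j(2^k-1)$ cuts inside $I_1 \cup \cdots \cup I_j$, i.e.\ $\sum_i (m_i - 1) \leq j(2^k-1)$. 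Comparing the two bounds forces equality $m_i = 2^k$ for every $i$, makes each walk a Hamiltonian path (so $A_i$ is a $k$-bit Gray code), and then the uniform-density equipartition condition forces the $2^k$ subintervals of $I_i$ to all have length $|I_i|/2^k$.

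Next I would deduce the gluing property and compute the transition counts. The counting above is tight, so every external cut of $\gamma$ lies in $Q \subset H_1$; in particular, no hyperplane cuts $\gamma$ in the open gap between $I_i$ and $I_{i+1}$, which means the last subinterval of $I_i$ and the first subinterval of $I_{i+1}$ share an orthant label. This is exactly the condition that the last column of $A_i$ coincides with the first column of $A_{i+1}$. For the transition counts, a bit-change in row $s$ of $A$ between consecutive columns corresponds precisely to a cut of $H_s$ strictly inside $I_1 \cup \cdots \cup I_j$; since the total is attained with equality, each $H_s$ must realise the maximum $d$ cuts on $\gamma$. For $s=1$, exactly $\ell$ of these $d$ cuts are the external points of $Q$, leaving $d-\ell$ internal ones; for $s \geq 2$ all $d$ cuts are internal. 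This yields the prescribed transition counts and shows $A$ is an $\ell$-equiparting matrix.

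Uniqueness is then immediate: once the ordering and orientation of $\H$ are fixed, the orthant label of each subinterval is determined, so the entire matrix $A$ is read off canonically from $\H$. The main obstacle is the tight counting step of the second paragraph, which simultaneously pins down $m_i = 2^k$, the exclusivity of external cuts to $Q \subset H_1$, and the attainment of the maximal $d$ cuts per hyperplane; once this is in place, both the Gray code structure and the transition count statement fall out.
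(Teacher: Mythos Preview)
Your proof is correct and follows the same approach as the paper: read off the matrix $A$ from the orthant labels of the subintervals that $\H$ creates along $\gamma$, using a tight count to pin down the cut pattern. The paper's own proof is a two-sentence sketch that simply asserts the hyperplanes must pass through the prescribed interior points $q_{\ell+(i-1)2^k+i+1},\dots,q_{\ell+i2^k+i-1}$ of each $I_i$ and then records positions; your counting argument (each $H_s$ meets $\gamma$ in at most $d$ points, $\ell$ of the cuts are absorbed by $Q$, and each $I_i$ needs at least $2^k-1$ internal cuts to realize all orthants) is exactly the justification the paper leaves implicit, and it cleanly yields the Gray code structure, the gluing of last/first columns, and the transition counts all at once.
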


\begin{proof}
Since $dk =j(2^k-1)+ \ell$ and $0\leq \ell\leq d-1$, the hyperplanes in $\H$ must pass through the points  $q_{\ell+(i-1)2^k + i +1}, \dots, q_{\ell+i2^{k} +i-1}$ of the intervals $I_i$ for $i\in\{1, \dots, j\}$. Recording the position of the subintervals $[q_{\ell+r},q_{\ell+r+1}]$, for $r\neq i2^k +i$, with respect to each hyperplane leads to a matrix as in described in Lemma~\ref{lem:matrices_lead_to_equipartitions}.	
\end{proof}
 
\begin{figure}[ht!]
\centering
\includegraphics[scale=1.2]{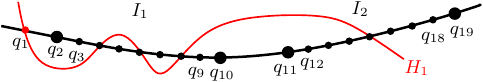}
\label{fig_mom_curv_2}
\caption{\small Illustration of one step in the proof of Lemma~\ref{lem:matrices_lead_to_equipartitions}. 
Here~$H_1$ is an affine hyperplane bisecting two intervals $I_1$ and $I_2$ on the $5$-dimensional moment curve.}
\end{figure}

Thus the number of non-equivalent $\ell$-equiparting matrices is the same as the number of arrangements of $k$ affine hyperplanes $\H$ that equipart the collection of $j$ disjoint intervals on the moment curve in $\R^d$, up to renumbering and orientation change of hyperplanes in $\H$, when one of the hyperplanes is forced to pass through $\ell$ prescribed points on the moment curve lying to the left of the intervals. This concludes the proof of Theorem~\ref{th-matrices_corr_equipartitions}. \qed

\subsection{Proof of Theorem \ref{th-matrices_num_odd_equip_exist}}

Let $j\geq 1$ and $k\geq 3$ be integers with $d=\lceil \tfrac{2^k-1}{k}j\rceil$ and $\ell = dk - (2^k-1)j$.
In addition, assume that the number of non-equivalent $\ell$-equiparting matrices of size $k \times j2^k$ is odd.

In order to prove that $\Delta(j,k) \le d$ it suffices by Theorem~\ref{prop:CS/TM-1} to prove that there is no $\Wk_k$-equivariant map
\[
X_{d,k} \longrightarrow S(W_k\oplus U_{k}^{\oplus j}),
\]
whose restriction to $X_{d,k}^{>1}$ is $\Wk_k$-homotopic to $\nu\circ\psi_{\mathcal{M}}|_{X_{d,k}^{>1}}$ for $\mathcal{M}=(I_1, \ldots, I_j)$.
Following Section~\ref{subsec:obstruction theory} we verify that the cohomology class
\[
[\oo(g)]\in \mathcal{H}_{\Wk_k}^{N_2+1}\big(X_{d,k},X_{d,k}^{>1} \,; \,\pi_{N_2}(S(W_k\oplus U_{k}^{\oplus j}))\big),
\]
does not vanish, where $g=\nu\circ\psi_{\M}|_{X^{(N_2)}}$.

Consider the cell $\theta:=D^{+,+,+,\ldots,+}_{\ell+1,1,1,\ldots,1}(1,2,3,\ldots,k)$ of dimension $(d+1)k-1-\ell=N_2+1$ in $X_{d,k}$, as in Example \ref{example:boundary-1}.
Let $e_{\theta}$ denote the corresponding basis element of the cell~$\theta$ in the cellular chain group $C_{N_2+1}(X_{d,k},X_{d,k}^{>1})$, and let $h_{\theta}$ be the attaching map of $\theta$.
This cell is cut out from the unit sphere $S(\R^{(d+1)\times k})$ by the following inequalities:
\[
0<_{\ell+1}x_1<_1 x_2<_1 \cdots <_1 x_k.
\] 
In particular, this means that the first $\ell$ coordinates of $x_1$ are zero, i.e., $x_{1,1}=x_{2,1}=x_{3,1}=\cdots =x_{\ell,1}=0$, and $x_{\ell+1,1}>0$.

Let us fix $\ell$ points $Q=\{q_1,\dots,q_\ell\}$ on the moment curve \eqref{eq:moment-curve} in $\R^{d+1}$ as it was done in \eqref{eq:point_on_moment_curve}: $q_1:=\gamma(0),\ldots,q_{\ell}:=\gamma(\ell-1)$.
Then, by Lemma~\ref{lem:parametrization-of-theta}, the relative interior of $D^{+,+,+,\ldots,+}_{\ell+1,1,1,\ldots,1}(1,2,3,\ldots,k)$ parametrizes the arrangements $\H=(H_1,\ldots,H_k)$ for which orientations and order of the hyperplanes are fixed with $H_1$ containing all the points from $Q$.
According to the formula \eqref{eq:obstruction_cocycle} we have that
\[
\oo(g)(e_{\theta}) = [\nu\circ\psi_{\M}\circ h_{\theta}|_{\partial\theta}]
 = \sum\deg(\nu\circ\psi_{\M}|_{X^{(N_2+1)}}\circ h_{\theta}|_{S_i})\cdot\zeta,
\]
where as before $\zeta\in \pi_{N_2}(S(W_k\oplus U_{k}^{\oplus j}))\cong\Z$ is a generator, and the sum ranges over all arrangements of $k$ hyperplanes in $\relint\theta$ that equipart~$\M$.
Here, as before, $S_i$ denotes a small $N_2$-sphere around a root of the function $\psi_{\M}|_{X^{(N_2+1)}}\circ h_{\theta}$, i.e., the point that parametrizes an arrangements of $k$ hyperplanes in $\relint\theta$ that equipart~$\M$.\newline
Now, the local degrees of the function $\nu\circ\psi_{\M}|_{X^{(N_2+1)}}\circ h_{\theta}$ are $\pm 1$.
Indeed, in a small neighborhood $U\subseteq \relint\theta$ around any root the test map $\psi_{\M}$ is a continuous bijection.
Thus $\psi_{\M} |_{\partial U}$ is a continuous bijection into some $N_2$-sphere around the origin in $W_k\oplus U_{k}^{\oplus j}$ and by compactness of $\partial U$ is a homeomorphism.
Consequently, 
\begin{equation}
	\label{eq:eval_obstruction_cocycle_1}
	\oo(g)(e_{\theta}) = \sum\deg(\nu\circ\psi_{\M}|_{X^{(N_2+1)}}\circ h_{\theta}|_{S_i})\cdot\zeta
	=\big(\sum\pm 1\big)\cdot\zeta = a\cdot\zeta,
\end{equation}
where the sum ranges over all arrangements of $k$ hyperplanes in $\relint\theta$ that equipart $\M$.
According to Theorem~\ref{th-matrices_corr_equipartitions} the number of $(\pm 1)$'s in the sum \eqref{eq:eval_obstruction_cocycle_1} is equal to the number of non-equivalent $\ell$-equiparting matrices of size $k \times j2^k$.
By our assumption this number is odd and consequently $a\in\Z$ is an odd integer.
We obtained that
\begin{equation}
	\label{eq:eval_obstruction_cocycle_2}
	\oo(g)(e_{\theta}) = a\cdot\zeta,
\end{equation}
where $a\in\Z$ is an odd integer.

\begin{remark}\label{rem:k=2}
It is important to point out that the calculations and formulas up to this point also hold for $k=2$. 
The assumption $k\geq 3$ affects the $\Wk_k=(\Z/2)^k\rtimes\Sym_k$ module structure on $\pi_{N_2}(S(W_k\oplus U_{k}^{\oplus j}))\cong\Z$.    
For $k\geq2$ every generator $\varepsilon_i$ of the subgroup $(\Z/2)^k$ acts trivially, while each transposition $\tau_{i,t}$, a generator of the subgroup $\Sym_k$,  acts as multiplication by $-1$ in the case $k\geq 3$,  and as multiplication by $(-1)^{j+1}$ in the case $k=2$. 
\end{remark}

Finally, we prove that $[\oo(g)]$ does not vanish and conclude the proof.
This will be achieved by proving that  the cocycle $\oo(g)$ is not a coboundary. \newline
Let us assume to the contrary that $\oo(g)$ is a coboundary. 
Thus there exists a cochain 
\[
\bb\in \mathcal{C}_{\Wk_k}^{N_2}\big(X_{d,k},X_{d,k}^{>1} \,; \,\pi_{N_2}(S(W_k\oplus U_{k}^{\oplus j}))\big)
\]
such that $\oo(g)=\delta\bb$, where $\delta$ denotes the coboundary operator.
In the case when 
\begin{compactenum}[\rm (1)]
\item $\ell=0$ the relation \eqref{rel - 1} implies that \allowdisplaybreaks 
\begin{eqnarray*}
a\cdot \zeta &=& \oo(g)(e_{\theta})=\delta\bb(e_{\theta}) =\bb(\partial e_{\theta})\\
&=&(1+(-1)^d\varepsilon_1)\cdot \bb(e_{\gamma_1})+\sum_{i=2}^{k}
      (1+(-1)^d\tau_{i-1,i})\cdot \bb(e_{\gamma_{2i-1}}) \\
&=&  (1+(-1)^d)\cdot \bb(e_{\gamma_1})+\sum_{i=2}^{k}
      (1+(-1)^{d+1})\cdot \bb(e_{\gamma_{2i-1}})\\
&=& 2b\cdot\zeta,          
\end{eqnarray*}	
for some integer $b$.
Since $a$ is an odd integer this is not possible, and therefore $\oo(g)$ is not a coboundary.
\item $\ell=1$ the relation \eqref{rel - 2} implies that \allowdisplaybreaks 
\begin{eqnarray*}
a\cdot \zeta &=&\oo(g)(e_{\theta})=\delta\bb(e_{\theta}) =\bb(\partial e_{\theta})\\
&=&	(1+(-1)^{d-1}\varepsilon_1)\cdot \bb(e_{\gamma_1})+\\
& & (1+(-1)^d\varepsilon_2+(-1)^d\tau_{1,2}+(-1)^{d+d}\varepsilon_1\tau_{1,2})\cdot \bb(e_{\gamma_3})+ \\
& & \sum_{i=3}^{k} (1+(-1)^d\tau_{i-1,i})\cdot \bb(e_{\gamma_{2i-1}})\\
&=& (1+(-1)^{d-1})\cdot \bb(e_{\gamma_1})+(1+(-1)^d+(-1)^{d+1}-1)\cdot \bb(e_{\gamma_3})+\\
& & \sum_{i=3}^{k} (1+(-1)^{d+1})\cdot \bb(e_{\gamma_{2i-1}})\\
&=& (1+(-1)^{d-1})\cdot \bb(e_{\gamma_1})+\sum_{i=3}^{k} (1+(-1)^{d+1})\cdot \bb(e_{\gamma_{2i-1}})\\
&=& 2b\cdot\zeta,
\end{eqnarray*}	
for $b\in\Z$.
Again we reached a contradiction, so $\oo(g)$ is not a coboundary.
\item $2\leq\ell\leq d-1$ the relation \eqref{rel - 3} implies that \allowdisplaybreaks 
\begin{eqnarray*}
a\cdot \zeta &=& \oo(g)(e_{\theta})=\delta\bb(e_{\theta}) =\bb(\partial e_{\theta})\\
&=&(1+(-1)^{d-\ell}\varepsilon_1)\cdot \bb(e_{\gamma_1})+(1+(-1)^d\varepsilon_2)\cdot \bb(e_{\gamma_3})+\\
& &\sum_{i=3}^{k}(1+(-1)^d\tau_{i-1,i})\cdot \bb(e_{\gamma_{2i-1}}) \\
&=&(1+(-1)^{d-\ell})\cdot \bb(e_{\gamma_1})+(1+(-1)^d)\cdot \bb(e_{\gamma_3})+\\
& &\sum_{i=3}^{k}(1+(-1)^{d+1})\cdot \bb(e_{\gamma_{2i-1}}) \\
&=& 2b\cdot\zeta,          
\end{eqnarray*}		
for an integer $b$.
Since $a$ is an odd integer this is not possible.
Again, $\oo(g)$ is not a coboundary.  \qed
\end{compactenum}

\subsection{Proof of Theorem~\ref{th-main:D(j,2)}} 

Let $j\geq 1$ be an integer with $d=\lceil \tfrac{3}{2}j\rceil$ and $\ell = 2d - 3j\leq 1$.

The proof of this theorem is done in the footsteps of the proof of Theorem~\ref{th-matrices_num_odd_equip_exist}.
In all three cases we rely on Theorem~\ref{prop:CS/TM-1} and prove
\begin{compactitem}[ $\bullet$ ]
\item the non-existence of $\Wk_2$-equivariant map 	$X_{d,2} \longrightarrow S(W_2\oplus U_{2}^{\oplus j})$ whose restriction to $X_{d,2}^{>1}$ is $\Wk_2$-homotopic to $\nu\circ\psi_{\mathcal{M}}|_{X_{d,2}^{>1}}$ for $\mathcal{M}=(I_1, \ldots, I_j)$; by
\item evaluating the obstruction cocycle $\oo(g)$ for $g=\nu\circ\psi_{\M}|_{X^{(N_2)}}$ on cells $D^{+,+}_{1,1}(1,2)$ or $D^{+,+}_{2,1}(1,2)$, depending on $\ell$ being $0$ or $1$, using Theorem~\ref{th-matrices_corr_equipartitions}; and then
\item prove that the cocycle $\oo(g)$ cannot be a coboundary, utilizing boundary formulas from Example~\ref{example:boundary-1}. 
\end{compactitem}

\subsubsection{$2$-bit Gray codes}
In order to evaluate the obstruction cocycle $\oo(g)$ on the relevant cells in the case $k=2$ we need to understand $(2 \times 4)$-Gray codes. 
These correspond to equipartitions of an interval $I$ on the moment curve into four equal orthants by intersecting with two hyperplanes $H_1$ and $H_2$ in altogether three points of the interval. 
There are two such configurations: either $H_1$ cuts through the midpoint of $I$ and $H_2$ separates
both halves of $I$ into equal pieces by two additional intersections, or the roles of $H_1$ and $H_2$ are reversed.
In terms of Gray codes we can express this as follows.

\begin{lemma}
	There are two different $2$-bit Gray codes that start with the zero column (or any other fixed binary vector of length~$2$):
\[
\begin{pmatrix}
0 & 1 & 1 & 0  \\
0 & 0 & 1 & 1  
\end{pmatrix}
\qquad
\text{and}
\qquad
\begin{pmatrix}
0 & 0 & 1 & 1  \\
0 & 1 & 1 & 0  
\end{pmatrix}
\]
\end{lemma}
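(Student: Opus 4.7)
The plan is to prove this by a direct case analysis on the second column, using only the two defining properties of a Gray code: all $2^2=4$ columns are distinct binary vectors of length~$2$, and consecutive columns differ in exactly one bit.

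First I would fix the first column to be the zero column $(0,0)^t$ (the argument for any other starting column is identical by translation, i.e.\ by XOR-ing a fixed vector to every column). The second column must differ from $(0,0)^t$ in exactly one coordinate, so it is either $(1,0)^t$ or $(0,1)^t$. This splits the argument into two cases, and in each case the remaining two columns are forced.

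In the case where the second column is $(1,0)^t$, the third column must differ from $(1,0)^t$ in exactly one coordinate and be distinct from the first two columns; the only choice is $(1,1)^t$. The fourth column must then differ from $(1,1)^t$ in exactly one coordinate and be distinct from the previous three; the only remaining vector is $(0,1)^t$, which indeed differs from $(1,1)^t$ in one coordinate. This yields the first matrix in the statement. The symmetric argument when the second column is $(0,1)^t$ forces the sequence $(0,0)^t,(0,1)^t,(1,1)^t,(1,0)^t$, giving the second matrix.

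Since the two branches produce exactly these two codes and exhaust all possibilities, the claim follows. There is no real obstacle here; the only thing to be careful about is the remark that the statement for any other fixed starting column reduces to the zero case by bitwise addition of a constant vector, which preserves both the distinctness of columns and the single-bit transition property.
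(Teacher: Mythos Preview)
Your proof is correct and follows essentially the same approach as the paper's: the paper's one-line argument is that the second column (for which there are two choices of bit flip) determines the rest of the code, and you have simply spelled this out in full detail.
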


\begin{proof}
The second column of the Gray code determines the rest of the code, and there are only two choices for a bit flip.	
\end{proof}

This means that in the case $k=2$ an $\ell$-equiparting matrix~$A$ has a more compact
representation: it is determined by the first column -- a binary vector of length~$2$ -- and $j$ additional bits, one for 
each~$A_i$, encoding whether the first bit flip in $A_i$ is in the first or second row. 
These $j$ bits {\em cannot be chosen independently} since there are restrictions imposed by the transition count.

\begin{lemma}
\label{lem:equipartin matrices - 2}
Let $j\geq 1$ be an integer with $d=\lceil \tfrac{3}{2}j\rceil$ and $\ell = 2d - 3j\leq 1$.	
\begin{compactenum}[\rm (1)]
\item If $\ell=0$, then the number of non-equivalent $0$-equiparting matrices is equal to
\[ \tfrac12\binom{j}{\tfrac{j}2}.\]
\item If $\ell=1$, then the number of non-equivalent $1$-equiparting matrices is equal to
\[ \binom{j}{\tfrac{j+1}2}.\]
\end{compactenum}
\end{lemma}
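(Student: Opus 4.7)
The plan is to parameterize $\ell$-equiparting matrices by small combinatorial data, impose the transition-count constraint, and then quotient by the equivalence relation. By the preceding lemma on $2$-bit Gray codes, once the first column of $A_i$ is fixed, the block $A_i$ is determined by a single bit $b_i \in \{1,2\}$ recording whether the first transition in $A_i$ occurs in row~$1$ or in row~$2$. A direct inspection shows that $b_i=1$ contributes $(2,1)$ and $b_i=2$ contributes $(1,2)$ to the pair of row transition counts, and that the last column of $A_i$ differs from its first column precisely in the row \emph{other than} the one flipped first. Since the first column of $A_{i+1}$ must equal the last column of $A_i$, the entire matrix $A$ is then determined by the first column $v \in \{0,1\}^2$ of $A_1$ together with the sequence $(b_1,\dots,b_j) \in \{1,2\}^j$.

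Next I would translate the transition-count condition into a condition on $(b_1,\dots,b_j)$. Writing $n_t := \#\{i : b_i = t\}$, the two row transition counts of $A$ are $2n_1 + n_2$ and $n_1 + 2n_2$. For $\ell = 0$ we need both equal to $d = \tfrac{3j}{2}$, which forces $n_1 = n_2 = \tfrac{j}{2}$ and leaves $\binom{j}{j/2}$ admissible sequences; for $\ell = 1$ we need one row to have transition count $\tfrac{3j-1}{2}$ and the other $\tfrac{3j+1}{2}$, splitting into two subcases $\{n_1,n_2\} = \{\tfrac{j-1}{2}, \tfrac{j+1}{2}\}$ that together yield $2\binom{j}{(j+1)/2}$ admissible sequences. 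In each case the number of matrices is four times the number of sequences, accounting for the free choice of $v$.

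Finally I would divide by the equivalence, which is the $\Wk_2 = (\Z/2)^2 \rtimes \Sym_2$-action generated by row-bit inversions and the row swap. In our parameterization, the subgroup $(\Z/2)^2$ of bit-inversions acts simply transitively on $v$ and fixes the type sequence, so modding it out amounts to normalizing $v = (0,0)^t$. The stabilizer of $(0,0)^t$ in $\Wk_2$ is exactly the $\Sym_2$ swapping the two rows, and it acts on sequences by the involution $1 \leftrightarrow 2$ applied entrywise. In the $\ell = 0$ case this involution has no fixed points (no sequence satisfies $b_i = \bar b_i$ for all $i$), so it acts freely and produces $\tfrac12 \binom{j}{j/2}$ orbits; in the $\ell = 1$ case the involution interchanges the two subcases $n_1 = \tfrac{j-1}{2}$ and $n_1 = \tfrac{j+1}{2}$ bijectively, giving $\binom{j}{(j+1)/2}$ orbits. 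The main bookkeeping concern is verifying that after the $(\Z/2)^2$-normalization the residual $\Sym_2$-action on sequences is indeed this swap; once this is checked, the two counts fall out immediately.
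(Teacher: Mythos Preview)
Your argument is correct and follows essentially the same approach as the paper: parameterize each block $A_i$ by the single bit recording where its first flip occurs, translate the transition-count constraint into a constraint on the number of $1$'s versus $2$'s in the sequence $(b_1,\dots,b_j)$, and then quotient by the $\Wk_2$-equivalence. The paper is terser about the quotient step (it simply normalizes the first column and, for $\ell=1$, normalizes which row carries transition count~$d$), whereas you spell out the $(\Z/2)^2 \rtimes \Sym_2$-action explicitly and check freeness; both amount to the same computation.
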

\begin{proof}
We count the number of non-equivalent $\ell$-equiparting matrices of the form $A=(A_1,\ldots,A_j)$ where $A_i$ is a $2$-bit Gray code.
A $(2\times 4)$-Gray code with the first bit flip in the first row has in total two bit flips in the first row and one bit flip in the second row. 

\noindent
{\bf (1)} Let $\ell=0$. 
Then $2d=3j$ and consequently $j$ has to be even.
The matrix $A$ must have transition count $d$  in each row. 
Thus, half of the $A_i$'s have the first bit flip in the first row. 
Consequently, $0$-equiparting matrices~$A$ with a fixed first column are in bijection with $\tfrac{j}2$-element subsets of a set with $j$ elements. 
By inverting the bits in each row we can fix the first column of $A$ to be the zero vector. 
Additionally, we are allowed to interchange the rows. 
Up to this equivalence there are $\frac{1}{2}\binom{j}{j/2}$
such matrices. 

\noindent
{\bf (2)} Let $\ell=1$. 
Then $2d=3j+1$ and so $j$ is odd.
The matrix $A$ must have transition count $d$ in one row while transition count $d-1$ in the remaining row.
Without loss of generality we can assume that $A$ have transition count $d$ in the first row.
Assume that $r$ of the $A_i$'s have the first bit flip in the first row.
Consequently, $j-r$ of the $A_i$'s have the first bit flip in the second row.
Now the transition count of the first row is $2r+j-r$ while the transition count of the second row is $r+2(j-r)$.
The system of equations $2r+j-r=d,\, r+2(j-r)=d-1$ yields that $r=\tfrac{j+1}{2}$.
Therefore, up to equivalence, there are $\binom{j}{r}$ such matrices.
\end{proof}

\subsubsection{The case $\ell=0\Leftrightarrow 2d=3j$}
Let $\theta:=D^{+,+}_{1,1}(1,2)$, and let  $e_{\theta}$ denote the related basis element of the cell~$\theta$ in the top cellular chain group $C_{2d+1}(X_{d,2},X_{d,2}^{>1})$ which, in this case, is equivariantly generated by $\theta$.
According to equation \eqref{eq:eval_obstruction_cocycle_1}, which also holds for $k=2$ as explained in Remark~\ref{rem:k=2}, 
\begin{equation}
	\label{eq:eval_obstruction_cocycle_3}
	\oo(g)(e_{\theta}) =\big(\sum\pm 1\big)\cdot\zeta = a\cdot\zeta,
\end{equation}
where $\zeta\in \pi_{2d+1}(S(W_2\oplus U_{2}^{\oplus j}))\cong\Z$ is a generator, and the sum ranges over all arrangements of two hyperplanes in $\relint\theta$ that equipart~$\M$.
Since $\theta$ parametrizes all arrangements $\H=(H_1,H_2)$ where orientations and order of hyperplanes are fixed, the sum in \eqref{eq:eval_obstruction_cocycle_3} ranges over all arrangements of two hyperplanes that equipart $\M$ where orientation and order of hyperplanes are fixed.
Therefore, by Theorem~\ref{th-matrices_corr_equipartitions}, the number of $(\pm 1)$'s in the sum of \eqref{eq:eval_obstruction_cocycle_3} is equal to the number of non-equivalent $0$-equiparting matrices of size $2\times 4j$.
Now, Lemma~\ref{lem:equipartin matrices - 2} implies that the number of $(\pm 1)$'s in the sum of \eqref{eq:eval_obstruction_cocycle_3} is $\frac{1}{2}\binom{j}{j/2}$.
Consequently, integer $a$ is odd if and only if $\frac{1}{2}\binom{j}{j/2}$ is odd.

Assume that the cocycle $\oo(g)$ is a coboundary.
Hence, there exists a cochain 
\[
\bb\in \mathcal{C}_{\Wk_2}^{2d}\big(X_{d,2},X_{d,2}^{>1} \,; \,\pi_{2d}(S(W_2\oplus U_{2}^{\oplus j}))\big)
\]
with the property that $\oo(g)=\delta\bb$.
The relation \eqref{rel - 1} for $k=2$ transforms into 
\[
      \partial e_{\theta}=(1+(-1)^d\varepsilon_1)\cdot e_{\gamma_1}+(1+(-1)^d\tau_{1,2})\cdot e_{\gamma_{3}}.
\]
Thus we have that\allowdisplaybreaks 
\begin{eqnarray*}
a\cdot \zeta &=& \oo(g)(e_{\theta})=\delta\bb(e_{\theta}) =\bb(\partial e_{\theta})\\
&=& (1+(-1)^d\varepsilon_1)\cdot \bb(e_{\gamma_1})+(1+(-1)^d\tau_{1,2})\cdot \bb(e_{\gamma_{3}}) \\
&=& (1+(-1)^d)\cdot \bb(e_{\gamma_1})+(1+(-1)^{d+j+1})\cdot \bb(e_{\gamma_{3}}) \\
&=& 2b\cdot\zeta.          
\end{eqnarray*}	
Consequently, $\oo(g)$ is not a coboundary if and only if $a$ is odd if and only if $\frac{1}{2}\binom{j}{j/2}$ is odd.
Having in mind the Kummer criterion stated below we conclude that: A $\Wk_2$-equivariant map
$X_{d,2} \longrightarrow S(W_2\oplus U_{2}^{\oplus j})$ whose restriction to $X_{d,2}^{>1}$ is $\Wk_2$-homotopic to $\nu\circ\psi_{\mathcal{M}}|_{X_{d,2}^{>1}}$ does not exists {\em if and only is} $\oo(g)$ is not a coboundary {\em if and only if} $a$ is an odd integer {\em if and only if} $\frac{1}{2}\binom{j}{j/2}$ is odd {\em if and only if} $j=2^t$ for $t\geq1$. 

\begin{lemma}[Kummer \cite{kummer1852}]
\label{lemma:kummer}
Let $n \ge m \ge 0$ be integers and let $p$ be a prime. The maximal integer $k$ such that $p^k$ divides $\binom{n}{m}$ is the number of carries when $m$ and $n-m$ are added in base $p$.
\end{lemma}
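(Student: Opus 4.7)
The plan is to deduce Kummer's lemma from Legendre's formula for the $p$-adic valuation of $n!$, and then convert the resulting digit-sum identity into a statement about carries. Write $n = \sum_{i\geq 0} a_i p^i$ in base $p$ and let $s_p(n) := \sum_i a_i$ denote the digit sum. Legendre's identity
\[
v_p(n!) \;=\; \sum_{i\geq 1}\Big\lfloor \tfrac{n}{p^i}\Big\rfloor \;=\; \frac{n-s_p(n)}{p-1}
\]
is immediate: each $\lfloor n/p^i\rfloor$ counts multiples of $p^i$ among $1,\dots,n$, and the second equality follows by summing geometric series digit-by-digit.

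Applying this three times to the identity $v_p\binom{n}{m} = v_p(n!) - v_p(m!) - v_p((n-m)!)$ gives
\[
v_p\binom{n}{m} \;=\; \frac{s_p(m) + s_p(n-m) - s_p(n)}{p-1}.
\]
Thus the problem reduces to showing that the right-hand side equals the number of carries produced when $m$ and $n-m$ are added in base $p$.

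For this last step, I would perform the addition digit-by-digit. Write $m = \sum b_i p^i$ and $n-m = \sum c_i p^i$. At position $i$ we form $b_i + c_i + \varepsilon_i$, where $\varepsilon_i \in \{0,1\}$ is the incoming carry, and produce the digit $a_i$ of $n$ together with an outgoing carry $\varepsilon_{i+1}$. The defining relation is $b_i + c_i + \varepsilon_i = a_i + p\,\varepsilon_{i+1}$. Summing over $i$ and using $\varepsilon_0=0$ yields
\[
s_p(m) + s_p(n-m) \;=\; s_p(n) + (p-1)\sum_{i\geq 1}\varepsilon_i,
\]
so the numerator in the displayed formula is exactly $(p-1)$ times the total number of carries, as required.

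The whole argument is classical and the only subtlety is bookkeeping in the last identity, which is most cleanly handled by induction on the number of base-$p$ digits (or, equivalently, by the telescoping computation just described). There is no real obstacle; the main thing to be careful about is that carries are counted with the correct indexing (the carry \emph{out of} position $i$ is the relevant quantity, not the carry \emph{into} it), so that the boundary term $\varepsilon_0=0$ drops out cleanly.
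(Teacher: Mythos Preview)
Your proof is correct and is the standard derivation of Kummer's theorem via Legendre's digit-sum formula. Note, however, that the paper does not supply its own proof of this lemma at all: it is merely stated with a citation to Kummer's original 1852 paper and used as a classical black box, so there is nothing to compare your argument against.
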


Thus we have proved the case (ii) of Theorem~\ref{th-main:D(j,2)}.
Moreover, since the primary obstruction $\oo(g)$ is the only obstruction, we have proved that a $\Wk_2$-equivariant map $X_{d,2} \longrightarrow S(W_2\oplus U_{2}^{\oplus j})$ whose restriction to $X_{d,2}^{>1}$ is $\Wk_2$-homotopic to $\nu\circ\psi_{\mathcal{M}}|_{X_{d,2}^{>1}}$ exists if and only if $j$, an even integer, is not a power of $2$.

\subsubsection{The case $\ell=1\Leftrightarrow 2d=3j+1$}
Let $\theta:=D^{+,+}_{2,1}(1,2)$, and again let $e_{\theta}$ denote the related basis element of the cell~$\theta$ in the cellular chain group $C_{2d}(X_{d,2},X_{d,2}^{>1})$ which, in this case, is equivariantly generated by two cells $D^{+,+}_{2,1}(1,2)$ and $D^{+,+}_{1,2}(1,2)$.
Again, the equation \eqref{eq:eval_obstruction_cocycle_1} implies that
\begin{equation}
	\label{eq:eval_obstruction_cocycle_4}
	\oo(g)(e_{\theta}) =\big(\sum\pm 1\big)\cdot\zeta = a\cdot\zeta,
\end{equation}
where $\zeta\in \pi_{2d+1}(S(W_2\oplus U_{2}^{\oplus j}))\cong\Z$ is a generator, and the sum ranges over all arrangements of $k$ hyperplanes in $\relint\theta$ that equipart~$\M$.
The cell~$\theta$ parametrizes all arrangements $\H=(H_1,H_2)$ where $H_1$ passes through the given point on the moment curve and orientations and order of hyperplanes are fixed.
Thus, the sum in \eqref{eq:eval_obstruction_cocycle_4} ranges over all arrangements of two hyperplanes that equipart $\M$ where $H_1$ passes through the given point on the moment curve with order and orientation of hyperplanes being fixed.
Therefore, by Theorem~\ref{th-matrices_corr_equipartitions}, the number of $(\pm 1)$'s in the sum of \eqref{eq:eval_obstruction_cocycle_4} is the same as the number of non-equivalent $1$-equiparting matrices of size $2\times 4j$.
Again, Lemma~\ref{lem:equipartin matrices - 2} implies that the number of $(\pm 1)$'s in the sum of \eqref{eq:eval_obstruction_cocycle_4} is $\binom{j}{(j+1)/2}$.
The integer $a$ is odd if and only if $\binom{j}{(j+1)/2}$ is odd if and only if $j=2^t-1$ for $t\geq 1$.

Assume that the cocycle $\oo(g)$ is a coboundary.
Then there exists a cochain 
\[
\bb\in \mathcal{C}_{\Wk_2}^{2d-1}\big(X_{d,2},X_{d,2}^{>1} \,; \,\pi_{2d-1}(S(W_2\oplus U_{2}^{\oplus j}))\big)
\]
with the property that $\oo(g)=\delta\bb$.
Now, the relation \eqref{rel - 2} for $k=2$ transforms into 
\[
\partial e_{\theta} = (1+(-1)^{d-1}\varepsilon_1)\cdot e_{\gamma_1}+ (1+(-1)^d\varepsilon_2+(-1)^d\tau_{1,2}+(-1)^{d+d}\varepsilon_1\tau_{1,2})\cdot e_{\gamma_3}.
\]
Thus, having in mind that $j$ has to be odd, we have \allowdisplaybreaks 
\begin{eqnarray}
\label{rel - 6}
a\cdot \zeta &=& \oo(g)(e_{\theta})=\delta\bb(e_{\theta}) =\bb(\partial e_{\theta})\nonumber \\
&=& (1+(-1)^{d-1}\varepsilon_1)\cdot \bb(e_{\gamma_1})+ \nonumber \\
& & (1+(-1)^d\varepsilon_2+(-1)^d\tau_{1,2}+(-1)^{d+d}\varepsilon_1\tau_{1,2})\cdot \bb(e_{\gamma_3})\nonumber \\
&=& (1+(-1)^{d-1})\cdot \bb(e_{\gamma_1})+ (1+(-1)^d+(-1)^{d+j+1}+(-1)^{j+1})\cdot \bb(e_{\gamma_3})\nonumber \\
&=& (1+(-1)^{d-1})\cdot \bb(e_{\gamma_1})+ (1+(-1)^d+(-1)^{d}+1)\cdot \bb(e_{\gamma_3})\nonumber \\
&=& 
\begin{cases}
2\bb(e_{\gamma_1}),  & d\text{ odd}\\
4\bb(e_{\gamma_3}),  & d\text{ even}.	
\end{cases} 
\end{eqnarray}
Now, we separately consider cases depending on parity of $d$ and value of $j$. 

\smallskip
{\bf (1)} Let $d$ be odd.
Recall that $a$ is odd if and only if $j=2^t-1$ for $t\geq 1$. 
Since $d=\tfrac12(3j+1)=3\cdot 2^{t-1}-1$ and $d$ is odd we have that for $j=2^t-1$, with $t\geq 2$, the integer $a$ is odd and consequently $\oo(g)$ is not a coboundary.
Thus a $\Wk_2$-equivariant map $X_{d,2} \longrightarrow S(W_2\oplus U_{2}^{\oplus j})$ whose restriction to $X_{d,2}^{>1}$ is $\Wk_2$-homotopic to $\nu\circ\psi_{\mathcal{M}}|_{X_{d,2}^{>1}}$ does not exists.
We have proved the case (ii) of Theorem~\ref{th-main:D(j,2)} for $t\geq 2$.

\smallskip
{\bf (2)} Let $d=2$ and $j=1=2^1-1$.
Then the integer $a$ is again odd and consequently cannot be divisible by $4$ implying again that $\oo(g)$ is not a coboundary.
Therefore a $\Wk_2$-equivariant map $X_{2,2} \longrightarrow S(W_2\oplus U_{2})$ whose restriction to $X_{2,2}^{>1}$ is $\Wk_2$-homotopic to $\nu\circ\psi_{\mathcal{M}}|_{X_{2,2}^{>1}}$ does not exists.
This concludes the proof of the case (ii) of Theorem~\ref{th-main:D(j,2)}.

\smallskip
{\bf (3)} Let $d\geq 4$ be even.
Now we determine the integer $a$ by computing local degrees $\deg(\nu\circ\psi_{\M}|_{X^{(N_2+1)}}\circ h_{\theta}|_{S_i})$; see  
\eqref{eq:eval_obstruction_cocycle_1} and \eqref{eq:eval_obstruction_cocycle_4}.
We prove, almost identically as in \cite[Proof of Lem.\,5.6]{bfhz_2015}, that all local degrees equal, either $1$ or $-1$. 

That local degrees of $\nu\circ\psi_{\M}|_{\theta}$ are $\pm 1$ is simple to see since in a small neighborhood $U$ in $\relint\theta$ around any root $\lambda u+ (1-\lambda)v$ the test map $\psi_{\M}|_{\theta}$ is a continuous bijection.
Indeed, for any vector $w \in W_2\oplus U_{2}^{\oplus j}$, with sufficiently small norm, there is exactly one $\lambda u'+ (1-\lambda)v' \in U$ with $\psi_{\M}(\lambda u'+ (1-\lambda)v')= w$. 
Thus $\psi_{\M}|_{\partial U}$ is a continuous bijection into some $3j$-sphere around the origin of $W_2\oplus U_{2}^{\oplus j}$ and by compactness of $\partial U$ is a homeomorphism.

Next we compute the signs of the local degrees.
First we describe a neighborhood of every root of the test map $\psi_{\M}$ in $\relint\theta$.
Let $\lambda u+ (1-\lambda)v \in \relint\theta$ with $\psi_{\M}(\lambda u+ (1-\lambda)v) = 0$. 
Consequently $\lambda=\tfrac12$.
Denote the intersections of the hyperplane $H_u$ with the moment curve by $x_1, \ldots, x_d$ in the correct order along the moment curve. 
Similarly, let $y_1, \ldots, y_d$ be the intersections of $H_v$ with the moment curve. 
In particular, $x_1$ is the point $q_1$ that determines the cell~$\theta$, see Lemma~\ref{lem:parametrization-of-theta}. 
Choose an $\epsilon > 0$ such that $\epsilon$-balls around $x_2, \ldots, x_d$ and around $y_1, \ldots, y_d$ are pairwise disjoint with the property that these balls intersect the moment curve only in precisely one of the intervals $I_1,\ldots,I_j$.
Pairs of hyperplanes $(H_{u'}, H_{v'})$ with $\lambda u'+ (1-\lambda)v' \in \relint\theta$ that still intersect the moment curve in the corresponding $\epsilon$-balls parametrize a neighborhood of $\tfrac12 u+ \tfrac12 v$. 
The local neighborhood consisting of pairs of hyperplanes with the same orientation still intersecting the moment curve in the corresponding $\epsilon$-balls where the parameter $\lambda$ is in some neighborhood of $\tfrac12$.
For sufficiently small $\epsilon>0$ the neighborhood can be naturally parametrized by the product
\[
(\tfrac12-\epsilon,\tfrac12+\epsilon )\times \prod_{i=2}^{2d} (-\epsilon, \epsilon),
\]
where the first factor relates to $\lambda$, the next $d-1$ factors correspond to neighborhoods of the $x_2, \ldots, x_d$ and the last $d$ factors to $\epsilon$-balls around $y_1, \ldots, y_d$. 
A natural basis of the tangent space at $\tfrac12 u+ \tfrac12 v$ is obtained via the push-forward of the canonical basis of $\R^{2d}$ as tangent space at $(\frac12,0,\dots,0)^t$. 

Consider the subspace $Z \subseteq \relint\theta$ that consists all points $\lambda u+(1-\lambda)v$ associated to the pairs of hyperplanes $(H_u, H_v)$ such that both hyperplanes intersect the moment curve in $d$ points. 
In the space $Z$ the local degrees only depend on the orientations of the hyperplanes $H_u$ and~$H_v$, but these are fixed since $Z\subseteq \relint\theta$. 
Indeed, 
any two neighborhoods of distinct roots of the test map $\psi_{\M}$ can be mapped onto each other by a composition of coordinate charts since their domains coincide. 
This is a smooth map of degree $1$: the Jacobian at the root is the identity map. 
Let $\tfrac12 u+ \tfrac12 v$ and $\tfrac12  u'+\tfrac12 v'$ be roots in $Z$ of the test map $\psi_{\M}$ and let $\Psi$ be the change of coordinate chart described above. 
Then $\psi_{\M}$ and $\psi_{\M}\circ\Psi$ differ in a neighborhood of $\tfrac12  u+\tfrac12 v$ just by a permutation of coordinates. 
This permutation is always even by the following: 
\begin{claim*}
Let $A$ and $B$ be finite sets of the same cardinality. 
Then the cardinality of the symmetric sum $A \vartriangle B$ is even.
\end{claim*}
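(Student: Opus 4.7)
The plan is to use the standard inclusion-exclusion type identity relating the symmetric difference to the union and intersection. Writing $A \vartriangle B = (A \setminus B) \sqcup (B \setminus A)$ as a disjoint union, I would observe that $|A \setminus B| = |A| - |A \cap B|$ and $|B \setminus A| = |B| - |A \cap B|$, so that
\[
|A \vartriangle B| \;=\; |A| + |B| - 2|A \cap B|.
\]

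Since we are given $|A| = |B|$, the right-hand side equals $2\bigl(|A| - |A \cap B|\bigr)$, which is manifestly even. This completes the proof.

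There is essentially no obstacle here; the only minor care required is justifying that $(A \setminus B)$ and $(B \setminus A)$ are disjoint (immediate from the definitions) and that $A \cap B \subseteq A$ and $A \cap B \subseteq B$ so that the differences $|A| - |A\cap B|$ and $|B| - |A\cap B|$ are well-defined non-negative integers. The claim is a routine fact inserted to certify that the coordinate permutation discussed in the preceding paragraph, which swaps the roles of the $x$-intersections and the $y$-intersections of the moment curve with the two hyperplanes, can be decomposed into transpositions whose total number is even, and hence the associated change-of-chart Jacobian has determinant $+1$.
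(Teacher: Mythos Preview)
Your proof is correct. The paper itself states this claim without proof, treating it as an obvious fact; your argument via the identity $|A \vartriangle B| = |A| + |B| - 2|A \cap B| = 2(|A| - |A \cap B|)$ is exactly the standard one-line justification one would supply.
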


\noindent
The orientations of the hyperplanes $H_u$ and $H_v$ are fixed by the condition that $\tfrac12 u+ \tfrac12 v \in\relint\theta$.
Thus, $H_u$ and $H_v$ are completely determined by the set of intervals that $H_u$ cuts once. 
Let $A \subseteq \{1, \dots, j\}$ be the set of indices of intervals $I_1,\ldots,I_h$ that $H_u$ intersects once, and let $B \subseteq \{1, \dots, j\}$ be the same set for $H_v$. 
Then $\Psi$ is a composition of a multiple of $A \vartriangle B$ transpositions and, hence, an even permutation.
This means that all the local degrees ($\pm 1$'s) in the sum \eqref{eq:eval_obstruction_cocycle_4} are of the same sign, and consequently $a=\pm \binom{j}{(j+1)/2}$.

Now, since $d$ is even the equality \eqref{rel - 6} implies that 
\[
a\cdot\zeta = 4b\cdot\zeta.
\]
Thus, if $\oo(g)$ is a coboundary $a$ is divisible by $4$.
In the case $j=2^t+1$ where $t\geq2$, and $d=3\cdot 2^{t-1}+2$ the Kummer criterion implies that the binomial coefficient $\binom{j}{(j+1)/2}$ is divisible by $2$ but {\em not} by $4$.
Hence, $\oo(g)$ is not a coboundary and a $\Wk_2$-equivariant map $X_{d,2} \longrightarrow S(W_2\oplus U_{2}^{\oplus j})$ whose restriction to $X_{d,2}^{>1}$ is $\Wk_2$-homotopic to $\nu\circ\psi_{\mathcal{M}}|_{X_{d,2}^{>1}}$ does not exist.

This concludes the final instance (iii) of  Theorem~\ref{th-main:D(j,2)}.\qed

\subsection{Proof of Theorem~\ref{th-main:D(j,3)}}

We prove both instances of the Ramos conjecture  $\Delta(2,3)=5$ and $\Delta(4,3)=10$ using Theorem \ref{th-matrices_num_odd_equip_exist}.
Thus in order to prove that
\begin{compactitem}[ $\bullet$ ]
\item 	$\Delta(2,3)=5$ it suffices to show that the number of non-equivalent $1$-equiparting matrices of size $3\times 2\cdot 2^3$ is odd, Proposition~\ref{prop:2_3_enumeration};
\item $\Delta(4,3)=10$ it suffices to show that the number of non-equivalent $2$-equiparting matrices of size $3\times 4\cdot 2^3$ is also odd, Enumeration~\ref{enum:4_3_enumeration}.
\end{compactitem}
Consequently we turn our attention to $3$-bit Gray codes.
It is not hard to see that the following lemma holds.

\begin{lemma}\label{lem:3BitGrayCodes}
Let  $c_1 \in \{0,1\}^3$ be a choice of first column. 
\begin{compactenum}[\normalfont (i)]
\item \label{lem:3BitGrayCodes_i} There are $18$ different $3$-bit Gray codes $A=(c_1,c_2, \dots,c_8) \in \{0,1\}^{3 \times 8}$ that start with $c_1$. They have transition counts $(3,2,2), \, (3,3,1)$, or $(4,2,1)$. 
\item \label{lem:3BitGrayCodes_ii} There are $3$ equivalence classes of Gray codes that start with with~$c_1$. 
    The three classes can be distinguished by their transition counts.
\end{compactenum}
\end{lemma}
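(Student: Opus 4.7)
The plan is to reduce to counting and classifying Hamiltonian paths in the $3$-cube $Q_3$ that start at a fixed vertex, and to track row transition counts along the way.

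For the reduction, observe that inverting the rows of a Gray code in exactly those positions where $c_1$ has a $1$ produces a Gray code whose first column is $(0,0,0)^t$, and this operation is built into the equivalence relation of Definition~\ref{def:Equiparting_Gray_code} while preserving each individual row transition count. Hence it suffices to treat the case $c_1=(0,0,0)^t$, and in this case a $3$-bit Gray code starting at $c_1$ is precisely a Hamiltonian path in $Q_3$ that begins at the origin.

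For part~(\ref{lem:3BitGrayCodes_i}), I would enumerate these Hamiltonian paths by a short case analysis on the first two edges. By the $3$-fold coordinate symmetry of $Q_3$, each of the three initial edges out of $(0,0,0)^t$ contributes the same number of completions, and a further reflection symmetry between the two possible second moves reduces the enumeration to a single rooted case that expands into $3$ valid Hamiltonian completions. This yields $3 \cdot 2 \cdot 3 = 18$ paths. For each path produced, I would record the vector of row transition counts; these sum to $7$ (one bit flip per consecutive pair of columns), and since two consecutive bit flips cannot occur in the same row of a Gray code (that would revisit a vertex), at most $\lceil 7/2\rceil = 4$ flips can occur in any single row. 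The only triples $(t_1,t_2,t_3)$ with $t_i \geq 1$, $t_i \leq 4$, and $t_1+t_2+t_3 = 7$ are permutations of $\{3,3,1\}$, $\{4,2,1\}$, and $\{3,2,2\}$, each of which will appear in the enumeration.

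For part~(\ref{lem:3BitGrayCodes_ii}), the multiset of row transition counts is invariant under both row permutations and row bit-inversions, so it is an invariant of the equivalence relation; by part~(\ref{lem:3BitGrayCodes_i}) this gives at most three equivalence classes. For the matching lower bound I would observe that an equivalence between two Gray codes that both start at $(0,0,0)^t$ must be a pure row permutation, since any non-trivial bit-inversion would alter the first column. Moreover, $\Sym_3$ acts freely on the set of $3$-bit Gray codes: two equal rows would force all columns into the $4$-element subset of $\{0,1\}^3$ on which those two coordinates coincide, contradicting that a $3$-bit Gray code visits all $8$ columns. Thus the $18$ Gray codes split into $18/|\Sym_3| = 3$ free $\Sym_3$-orbits, giving exactly three equivalence classes, one per transition-count multiset. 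The only genuinely computational step is the enumeration in part~(\ref{lem:3BitGrayCodes_i}), which I would present compactly as a small decision tree rooted at the first move out of $(0,0,0)^t$.
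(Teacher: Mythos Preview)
Your argument is correct and follows essentially the same approach as the paper: both reduce to counting Hamiltonian paths in $Q_3$ from a fixed vertex for part~(i), and both establish part~(ii) by observing that when $c_1=(0,0,0)^t$ the equivalences are exactly the row permutations, giving orbits of size~$6$. Your write-up is simply more detailed, supplying the symmetry-based count $3\cdot 2\cdot 3=18$, the a~priori constraint on transition-count multisets, and the freeness argument for the $\Sym_3$-action that the paper leaves implicit.
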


\begin{proof}
(i): Starting at a given vertex of the $3$-cube, there are precisely $18$ Hamiltonian paths. 
This can be seen directly or by computer enumeration. \newline
(ii): Follows directly from (i), as all equivalence classes have size $6$:
      If $c_1=(0,0,0)^t$ then all elements in a class are obtained by permutation of rows. 
      For other choices of $c_1$, they are obtained by arbitrary permutations of rows 
      followed by the “correct” row bit-inversions to obtain $c_1$ in the first column.
\end{proof}

\begin{proposition}\label{prop:2_3_enumeration}
There are $13$ non-equivalent $1$-equiparting matrices that are of size $3\times (2\cdot 2^3)$.
\end{proposition}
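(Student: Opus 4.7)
The plan is to fix the first column of $A$ to the zero vector $(0,0,0)^t$ via bit-inversion of rows (which acts simply transitively on the eight possible first columns), count the resulting normalized $1$-equiparting matrices, and divide by $|\Sym_3|=6$. The row-permutation action on the set of normalized matrices is free: a nontrivial row permutation fixing $A$ would force two rows of the $3$-bit Gray code $A_1$ to coincide, contradicting the distinctness of its eight columns. Hence the number of equivalence classes equals the number of normalized matrices divided by~$6$.

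To enumerate normalized matrices I would apply Lemma~\ref{lem:3BitGrayCodes}: the $18$ Gray codes starting at a given vertex split into three $\Sym_3$-orbits of size $6$, distinguished by the transition count multiset $\{3,2,2\}$, $\{3,3,1\}$, or $\{4,2,1\}$. Since $\Sym_3$ acts freely on Gray codes while the $\Sym_3$-stabilizer of the count multiset has order $2$, $2$, and $1$ respectively, the number $N(t)$ of Gray codes starting at a given vertex with a \emph{specified} count vector~$t$ equals $2$, $2$, or $1$ accordingly. Translation invariance of the cube $\{0,1\}^3$ makes $N(t)$ independent of the starting vertex, so for each $(t_1,t_2)$ the number of normalized matrices with transition count vectors $(t_1,t_2)$ is $N(t_1)\cdot N(t_2)$; note that the starting vertex of $A_2$ is forced to be the endpoint of $A_1$, which is determined by the parity of $t_1$.

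It remains to enumerate ordered pairs $(t_1,t_2)$ whose sum is a permutation of $(5,5,4)$, organized by the unordered pair of class types of $A_1$ and $A_2$. A direct check gives the following contributions: both $A_1$ and $A_2$ of class $\{3,2,2\}$ yields $6$ ordered pairs of weight $4$, contributing $24$; both of class $\{3,3,1\}$ yields none; both of class $\{4,2,1\}$ yields $6$ ordered pairs of weight $1$, contributing $6$; the mixed pairing $\{3,2,2\}$/$\{3,3,1\}$ yields $6$ ordered pairs of weight $4$, contributing $24$; the mixed pairing $\{3,2,2\}$/$\{4,2,1\}$ yields none; and the mixed pairing $\{3,3,1\}$/$\{4,2,1\}$ yields $12$ ordered pairs of weight $2$, contributing $24$. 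Summing yields $78$ normalized matrices, and dividing by~$6$ gives $13$, as claimed. The only delicate step is the bookkeeping in this six-way case analysis, but every verification reduces to the addition of two length-three vectors with small integer entries, so there is no conceptual obstacle.
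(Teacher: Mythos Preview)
Your argument is correct and reaches the same count of $13$, but the organization differs from the paper's. The paper normalizes the first column to $(0,0,0)^t$ and then picks \emph{one} representative $A_1$ from each of the three $\Sym_3$-orbits of Gray codes (with transition counts $(3,2,2)$, $(3,3,1)$, $(4,2,1)$), reads off the forced last column of $A_1$, and directly lists the admissible $A_2$'s in each case, obtaining $6+4+3=13$. You instead keep all $18$ choices for $A_1$, count all $78$ normalized pairs by a weighted enumeration of transition-count vectors, and divide by $|\Sym_3|=6$, after checking that the residual $\Sym_3$-action on normalized matrices is free. Your route is more symmetric and makes the group action explicit; the paper's is more concrete and avoids the freeness argument and the final division. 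Both are straightforward, and your case analysis of the six unordered type pairs checks out line by line.
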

\begin{proof}
Let $A=(A_1,A_2)$ be a $1$-equiparting matrix.
This means that both $A_1$ and $A_2$ are $3$-bit Gray codes and the last column of $A_1$ is equal to the first column of $A_2$. 
In addition, the transition counts cannot exceed~$5$ and must sum up to~$14$. 
Having in mind that $A$ is a $1$-equiparting matrix it follows that $A$ must have transition counts $\{5,5,4\}$. Hence two of its rows must have transition count $5$ and one row must have transition count $4$. In the following a \emph{realization} of transition counts is a Gray code with the prescribed transition counts.

Since we are counting $1$-equiparting matrices up to equivalence we may fix the first column of $A$, and hence first column of $A_1$, to be $(0,0,0)^t$ and choose for $A_1$ one of the matrices from each of the $3$ classes of $3$-bit Gray codes described in Lemma~\ref{lem:3BitGrayCodes}(\ref{lem:3BitGrayCodes_ii}). 

If $A_1$ has transition counts $(3,2,2)$, i.e., the first row has transition count $3$ while remaining rows have transition count $2$, then its last column is $(1,0,0)^t$. 
The next Gray code $A_2$ in the matrix $a$ can have transition counts $(2,3,2)$, $(2,2,3)$, or $(1,3,3)$, each having $2$ realizations~$A_2$, each with first column $(1,0,0)^t$. 

If $A_1$ has transition $(3,3,1)$, then its last column is $(1,1,0)^t$. The Gray code $A_2$ can have transition counts $(2,2,3)$, having $2$~realizations, or $(1,2,4)$, having $1$~realization, or $(2,1,4)$, having $1$~realization, each with first column $(1,1,0)^t$. 

If $A_1$ has transition counts $(4,2,1)$, then its last column is $(0,0,1)^t$. The Gray code $A_2$ can have transition counts $(1,2,4)$, having $1$~realization, or $(1,3,3)$, having $2$ realizations, each with first column $(0,0,1)^t$.

In total we have $6 + 4 + 3 = 13$  non-equivalent $1$-equiparting matrices $A=(A_1,A_2)$.
\end{proof}

\begin{enumeration}\label{enum:4_3_enumeration}
There are $2015$ non-equivalent $2$-equiparting matrices that are of size $3\times 4\cdot 2^3$.
\end{enumeration}
\begin{proof}
Using Lemma~\ref{lem:3BitGrayCodes} we enumerate non-equivalent $2$-equiparting matrices by computer.
Let $A=(A_1,A_2,A_3,A_4)$ be a  $2$-equiparting matrix.
It must have transition counts $\{10,10,8\}$. 
Similarly as above, $A$ is constructed by fixing the first column to be $(0,0,0)^t$ and $A_1$ to be one representative from each of the $3$ classes of Gray codes. 
Then all possible Gray codes for $A_2,A_3,A_4$ are checked, making sure that the last column of $A_i$ is equal to the first column of $A_{i+1}$ and that the transition counts of $A_1,\ldots,A_4$ sum up to $\{10,10,8\}$. 
This leads to $2015$~possibilities.
\end{proof}

This concludes the proof of Theorem~\ref{th-main:D(j,3)}.

\begin{remark}
By means of a computer we were able to calculate the number $N(j,k,d)$ of non-equivalent $\ell$-equiparting matrices for several values of $j \geq 1$ and $k \geq 3$, where $d=\lceil \tfrac{2^k-1}{k}j\rceil$ and  $\ell = dk - (2^k -1)j$. See Table~\ref{table_num_equip_matrices}.

\begin{table}[ht]
\begin{tabular}{| p{2.1cm} | p{2.1cm} | p{2.1cm} | p{2.1cm} |r| }
\hline
\multicolumn{5}{|c|}{ Number $N(j,k,d)$ of non-equiv $\ell$-equiparting matrices given $j\geq 2$, and $k\geq3$.}\\ \hline \hline
\centering $j$ & \centering $k$ &  \centering $\ell$ &\centering $d$ & $N(j,k,d)$ \\ \hline \hline
\centering 2 & \centering 3 & \centering 1 & \centering 5 & 13 \\ \hline
\centering 3 & \centering 3 & \centering 0 & \centering 7 & 60 \\ \hline
\centering 4 & \centering 3 & \centering 2 & \centering 10 &  2015\\ \hline
\centering 5 & \centering 3 & \centering 1 & \centering 12 & 35040 \\ \hline
\centering 6 & \centering 3 & \centering 0 & \centering 14 & 185130 \\ \hline
\centering 7 & \centering 3 & \centering 2 & \centering 17 & 7572908 \\ \hline
\centering 8 & \centering 3 & \centering 1 & \centering 19 & 132909840 \\ \hline
\centering 9 & \centering 3 & \centering 0 & \centering 21 & 732952248 \\ \hline
\centering 1 & \centering 4 & \centering 1 & \centering 4 & 16 \\ \hline
\centering 2 & \centering 4 & \centering 2 & \centering 8 & 37964 \\ \hline
\end{tabular}
\vspace{5pt}
\caption{\small Number $N(j,k,d)$ of non-equivalent $\ell$-equiparting matrices given $j\geq 2$ and $k\geq 3$, where $d=\lceil \tfrac{2^k-1}{k}j\rceil$ and $\ell = dk - (2^k-1)j$.}\label{table_num_equip_matrices}
\end{table}
\end{remark}

\small
\providecommand{\bysame}{\leavevmode\hbox to3em{\hrulefill}\thinspace} 
\providecommand{\href}[2]{#2}

\end{document}